\documentclass[a4paper,12pt]{amsart}
\usepackage[dvipdfmx]{graphicx}
\usepackage{latexsym,amssymb,amsmath,mathrsfs}
\usepackage{amsthm}
\usepackage{mathrsfs}
\usepackage{color}
\usepackage[english]{babel}
\usepackage{comment}
\usepackage[abbrev]{amsrefs}
\usepackage[top=30truemm,bottom=30truemm,left=20truemm,right=20truemm]{geometry}
\newtheorem{theorem}{Theorem}[section]
\newtheorem{lemma}[theorem]{Lemma}
\newtheorem{proposition}[theorem]{Proposition}
\newtheorem{corollary}[theorem]{Corollary}
\theoremstyle{remark}
\newtheorem{remark}[theorem]{Remark}

\theoremstyle{remark}
\newtheorem{example}[theorem]{Example}

\DeclareMathOperator{\Ric}{Ric}

\address{Graduate School of Mathematical Sciences, The University of Tokyo, Komaba, Tokyo, 153-8914, Japan} 
\email{yasuakifujitani@g.ecc.u-tokyo.ac.jp}

\newcommand{\e}{\mathrm{e}}
\newcommand{\Hess}{\mathrm{Hess}\,}
\renewcommand{\d}{\mathrm{d}}

\newcommand{\eps}{{\varepsilon}}
\makeatletter
\@namedef{subjclassname@2020}{\textup{2020} Mathematics Subject Classification}
\makeatother

\title[Comparison theorems in centro-affine differential geometry]{Comparison theorems in \\centro-affine differential geometry}
\author{Yasuaki Fujitani}
\keywords{Affine connection, Weighted Ricci curvature, Comparison geometry}
\subjclass[2020]{53B05, 53A15, 53C21}
\begin{document}
\begin{abstract}
For weighted manifolds with Bakry-\'{E}mery Ricci curvature bounded from below,
comparison geometric results have been established.
The Bakry-\'{E}mery Ricci curvature depends on the effective dimension $N$.
In this paper,
the case $N = 1$ is further studied in the setting of centro-affine differential geometry.
Several examples,
such as rotationally symmetric ellipsoids and hyperboloids,
arise in rigidity phenomena of comparison theorems.
\end{abstract}
\maketitle
\section{Introduction}\label{sec:intro}
Among comparison theorems,
we mainly focus on the Bishop--Gromov volume comparison theorem and the Cheng maximal diameter theorem.
In the classical case,
the rigidity of the Bonnet--Myers theorem is known to be Cheng's maximal diameter rigidity (see Cheng \cite{cheng1975eigenvalue}).
These results are obtained for Riemannian manifolds with Ricci curvature bounded from below.
Subsequently,
these results were generalized to weighted Riemannian manifolds with weighted Ricci curvature bounded from below.
We first introduce geometric analysis on weighted Riemannian manifolds.
Let $(M,g)$ be an $n$-dimensional Riemannian manifold with $n\geq 2$,
$f\in C^\infty(M)$ and $v_g$ be the Riemannian volume measure.
The triple $(M,g,\e^{-f}v_g)$ is called a weighted Riemannian manifold.
For $N \in (-\infty,1] \cup (n,+\infty]$,
the $N$-weighted Ricci curvature is defined as follows:
\begin{align*}
    \Ric_{g,f}^N := \Ric_g + \Hess f - \frac{\d f \otimes \d f}{N-n},
\end{align*}
where the last term vanishes when $N = +\infty$.
$\Ric_{g,f}^N$ is also called $N$-Bakry--\'{E}mery Ricci curvature.
The case $N = +\infty$ was introduced by Lichnerowicz \cite{lichnerowicz1970varieses} and Bakry-\'{E}mery \cite{bakry1985diffusions},
and the case $N \in (n,+\infty)$ was introduced by Qian \cite{qian1997estimates}.
For $N \in (n,+\infty)$,
the comparison geometric properties of weighted Riemannian manifolds with $\Ric_{g,f}^N \geq (n-1)\kappa \,g$ are similar to those of Riemannian manifolds with $\Ric_g \geq (n-1)\kappa \,g$ and $\mathrm{dim}(M) \leq N$.
For $N_1 \in (-\infty,1)$ and $N_2 \in (n,+\infty)$,
we have 
\begin{align*}
    \Ric_{g,f}^{N_2} \leq \Ric_{g,f}^\infty \leq \Ric_{g,f}^{N_1} \leq \Ric_{g,f}^1.
\end{align*}
Therefore,
a lower bound of $\Ric_{g,f}^{1}$ is a weaker condition than the lower bound of $\Ric_{g,f}^{N_1}$.
In the past decade,
the case $N \in (-\infty,1]$ has been intensively studied.
Indeed,
Ohta \cite{ohta2016convexity} and Kolesnikov--Milman \cite{kolesnikov2018poincare} investigated the case $N \in (-\infty,0]$.
Furthermore,
Wylie \cite{wylie2017splitting} obtained the Cheeger--Gromoll splitting theorem for the case $N = 1$.
Later,
for $\varphi \in C^\infty(M)$,
Wylie--Yeroshkin \cite{wylie2016geometry} considered the following affine connection:
\begin{align}\label{eq:WY-conn}
    \nabla^{g,\varphi}_X Y := \nabla^g_X Y - \d \varphi (X) Y - \d \varphi (Y)X
\end{align}
where $\nabla^g$ is the Levi-Civita connection of $g$.
For an affine connection $D$,
the curvature tensor and the Ricci curvature are defined as follows:
\begin{align*}
    R^{D}(X,Y)Z := D_X D_Y Z - D_Y D_X Z - D_{[X,Y]}Z, \quad \Ric_g^{D}(Y,Z) := \sum_{i=1}^n g(R^{D}(E_i,Y)Z,E_i),
\end{align*}
where $\{E_i\}_{i=1}^n$ is a $g$-orthonormal frame.
We sometimes write $\Ric^D$ instead of $\Ric_g^D$ since \textcolor{black}{this trace does not depend on $g$.}
Wylie--Yeroshkin \cite{wylie2016geometry} showed 
\begin{align}\label{eq:WY-affine-ricci}
        \Ric_{g,f}^1 = \Ric^{\nabla^{g,\varphi}}
\end{align}
when $\varphi := (n-1)^{-1}f$.
We note that $\nabla^{g,\varphi}$ is said to be {projectively equivalent to} $\nabla^g$,
and projectively equivalent connections have long been studied.
Wylie--Yeroshkin \cite{wylie2016geometry} used a property obtained by Weyl that states that geodesics of $\nabla^g$ and $\nabla^{g,\varphi}$ coincide as unparameterized curves (see e.g., Eastwood \cite{eastwood2008notes}).
In \cite{wylie2016geometry},
by considering the geodesics of the affine connection $\nabla^{g,\varphi}$,
they obtained the Bishop--Gromov volume comparison theorem,
Bonnet--Myers theorem and the Cheng maximal diameter rigidity for weighted manifolds with 
\begin{align}\label{eq:WY-bound}
    \Ric_{g,f}^1 \geq (n-1)\kappa\, \e^{-\frac{4f}{n-1}}\,g.
\end{align}
We also refer to Lu--Minguzzi--Ohta \cite{lu2022comparison} and Kuwae--Sakurai \cite{kuwae2021rigidity} for these theorems.
For the case $N = 1$,
the corresponding sectional curvature was also studied in Wylie \cite{wylie2015sectional}, 
and further investigated by Kennard--Wylie \cite{kennard2017positive} and Kennard--Wylie--Yeroshkin \cite{kennard2019weighted}.
In this paper,
we study rigidity properties of comparison theorems in \cite{wylie2016geometry,kuwae2021rigidity} in the setting of centro-affine differential geometry.

We now move on to centro-affine differential geometry.
Following Klein's Erlangen program,
which classifies geometries by groups of transformations,
\c{T}i\c{t}eica employed the {centro-affine transformation}:
\begin{align*}
    x \mapsto A x \quad \text{for}\quad x \in \mathbb{R}^{n+1},\ A \in GL_{n+1}(\mathbb{R}),
\end{align*}
and studied the invariants under this transformation.
The works of \c{T}i\c{t}eica are considered to be among the origins of affine differential geometry (see Agnew--Bobe--Boskoff--Suceav\u{a} \cite{agnew2009gheorghe}).
In particular,
the surface studied by \c{T}i\c{t}eica is now recognized as a proper affine sphere,
which is a generalized notion of the usual sphere.
Affine differential geometry studies invariants under affine transformations: $x\mapsto A x + a$,
whereas centro-affine differential geometry studies invariants under centro-affine transformations.
The choice of transversal vector field is important in affine differential geometry.
\textcolor{black}{Blaschke introduced a transversal vector field,
which is called the affine normal or Blaschke normal for hypersurfaces.}
In centro-affine differential geometry,
we take the position vector field as the transversal vector field.
The Blaschke geometry and centro-affine differential geometry coincide on proper affine spheres with centers at the origin (see also Li-Li-Simon \cite{li2004centro}).

One of the main topics in affine differential geometry is the theory of affine spheres.
Affine spheres are divided into three classes,
which are called elliptic,
parabolic and hyperbolic.
\textcolor{black}{As observed in Loftin \cite[Proposition 3]{loftin2010survey}},
elliptic and hyperbolic affine spheres with centers at the origin can be locally given by the radial graph:
\begin{align}\label{eq:gigena-radial-graph}
    F_h(x) := \frac{1}{h(x)} (1,x) \in \mathbb{R}^{n+1},
\end{align}
for $x\in \Omega$,
where $\Omega$ is a domain in $\mathbb{R}^n$,
and $h$ is a solution of a Monge--Amp\`{e}re type equation.
\textcolor{black}{We note that Loftin \cite[Proposition 5]{loftin2010survey} also mentioned that the Blaschke connection on the proper affine sphere is projectively flat.
Given that the relation of projective equivalence in \eqref{eq:WY-conn} played an effective role in the geometric analysis by Wylie--Yeroshkin \cite{wylie2016geometry},
this suggests a natural intersection between the $1$-weighted Ricci curvature $\Ric_{g,f}^1$ and the affine differential geometry.}

Among the three classes,
elliptic,
parabolic and hyperbolic,
the only global examples of elliptic and parabolic affine spheres are the hyperquadrics,
namely ellipsoids and paraboloids (see Trudinger--Wang \cite{trudinger2008monge} and Loftin \cite{loftin2010survey}).
Indeed,
J\"{o}rgens \cite{jorgens1954uberdie} and Calabi \cite{calabi1958improper} obtained the parabolic case,
whereas Deicke \cite{deicke1953uberdie},
Calabi \cite{calabi1972complete} and Cheng--Yau \cite{cheng1986complete} obtained the elliptic case.
On the other hand,
in addition to hyperboloids,
there are plenty of examples of hyperbolic affine spheres.
In fact,
\c{T}i\c{t}eica already produced an example of a hyperbolic affine sphere (see Loftin \cite[Section 3]{loftin2010survey}).
This example was later generalized by Calabi \cite{calabi1972complete}.
Furthermore,
Calabi \cite{calabi1972complete} conjectured that,
for each proper convex cone,
there exists a hyperbolic affine sphere associated with it (see also \cite{loftin2011cheng}) and Cheng--Yau \cite{cheng1977regularity} solved this conjecture.
In \cite{cheng1977regularity},
the results by Cheng--Yau \cite{cheng1976regularity} concerning the Minkowski problem were used.
Indeed,
this is an intersection between the Minkowski problem and affine differential geometry.
Very recently,
Milman \cite{milman2023centro} discovered another intersection between them.

We move on to the theory developed by Milman \cite{milman2023centro} for the log-Minkowski problem.
The log-Minkowski problem is the case $p=0$ of the $L^p$-Minkowski problem.
The case $p=1$ corresponds to the classical Minkowski problem,
for which Cheng--Yau \cite{cheng1976regularity} proved regularity results of the convex body.
Lutwak \cite{lutwak1993brunnminkowski} introduced the $L^p$-Minkowski problem for the case $p > 1$.
For the case $p \leq 1$,
we refer to Chou--Wang \cite{chou2006minkowski}.
These Minkowski problems are related to the Brunn--Minkowski inequality.
Hilbert also proved the classical Brunn--Minkowski inequality using the first eigenvalue of an operator (see \textcolor{black}{Bonnesen--Fenchel \cite{bonnesen1987theory}} and Kolesnikov--Milman \cite{kolesnikov2022local} for the history).
Kolesnikov--Milman \cite{kolesnikov2022local} modified the operator,
and called it the {Hilbert--Brunn--Minkowski operator}.
Furthermore,
they investigated the relation between the eigenvalue of this operator and the $L^p$-Minkowski problem.
After that,
Milman \cite{milman2023centro} showed that the Hilbert--Brunn--Minkowski operator coincides with the centro-affine Laplacian associated with the convex body.
Using this interpretation,
the log-Minkowski problem was investigated in \cite{milman2023centro}.
This is the intersection between the Minkowski problem and centro-affine differential geometry mentioned above.
In particular,
the Bochner formula by Opozda \cite{opozda2015bochner},
which was developed in the theory of affine differential geometry,
was used in \cite{milman2023centro}.
The advantage of considering centro-affine differential geometry in convex geometry is that convex bodies have constant positive centro-affine Ricci curvature.

The aim of this paper is to conduct comparison geometric arguments in the framework of centro-affine differential geometry.
Our main results are summarized as follows:
\begin{itemize}
    \item[(i)]
    Our first main result concerns the Bishop--Gromov type volume comparison theorem for the radial graphs $F_h(\Omega)$.
    We equip the radial graphs with the centro-affine connection,
    and consider the Bishop--Gromov type volume comparison theorem for weighted manifolds with \eqref{eq:WY-bound}.
    We see that the equality in the volume comparison forces the radial graphs to be rotationally symmetric hyperquadrics (Theorem \ref{thm:rigidity-volume-comparison}).  
    These hyperquadrics include upper halves of ellipsoids and the upper sheets of the two-sheeted hyperboloids.
    \vspace{0.5\baselineskip} 
    \item[(ii)] 
    Our second main result concerns the Cheng type maximal diameter theorem on convex bodies.
    We equip the convex bodies with the weighted structure induced by the theory in Milman \cite{milman2023centro},
    and consider the Cheng type maximal diameter theorem for weighted manifolds with \eqref{eq:WY-bound} for $\kappa > 0$.
    The rigidity forces the convex body to be a rotationally symmetric ellipsoid (Theorem \ref{thm:cheng-milman}).  
\end{itemize}

The organization of this paper is as follows:
In Section \ref{sec:Preliminaries},
we introduce notation and review comparison geometry,
affine differential geometry,
and centro-affine differential geometry for convex bodies.
In Section \ref{sec:radial-graph},
we equip the radial graph of the form \eqref{eq:gigena-radial-graph} with a weighted structure,
whose Wylie--Yeroshkin type affine connection \eqref{eq:WY-conn} coincides with the centro-affine connection (Proposition \ref{prop:radial-graph}).
After that,
we state our first main result (Theorem \ref{thm:rigidity-volume-comparison}),
where we investigate rigidity phenomena of Bishop--Gromov type volume comparison in the radial graph setting.
Furthermore,
we see that the model spaces appearing in Theorem \ref{thm:rigidity-volume-comparison} actually satisfy the rigidity conditions (Proposition \ref{prop:quadrics}),
and we show that there is an example that satisfies the curvature bound \eqref{eq:WY-bound} but does not attain equality in the volume comparison (Proposition \ref{prop-radial-graph-defect}).
We also calculate the curvature of the Calabi example in our setting,
and show that it does not satisfy any curvature bound of the type \eqref{eq:WY-bound}  (Remark \ref{rem:Calabi-curvature-bound}).
In Section \ref{sec:Milman},
we equip convex bodies with a weighted structure.
In particular,
we observe that an affine connection introduced by Milman \cite{milman2023centro} coincides with the Wylie--Yeroshkin type affine connection \eqref{eq:WY-conn} (Proposition \ref{prop:conn-identity}).
This is another intersection between centro-affine differential geometry and $1$-weighted Ricci curvature.
We note that the $1$-weighted Ricci curvature associated with it coincides with the centro-affine Ricci curvature.
While this is different from the radial graph setting in Section \ref{sec:radial-graph},
we see that there is a relation between them (Proposition \ref{prop:milman-radial}).
After that,
we state our second main result (Theorem \ref{thm:cheng-milman}),
where we show the Cheng type maximal diameter theorem for convex bodies.
Furthermore,
we compute the diameter bounds of various ellipsoids.
We see that some ellipsoids provide examples which satisfy the curvature bound \eqref{eq:WY-bound} but do not attain the maximal diameter (Proposition \ref{prop:ellipsoid-milman}).
The results in Sections \ref{sec:radial-graph} and \ref{sec:Milman} provide several examples illustrating 
the intersection between centro-affine differential geometry and comparison geometry for $1$-weighted Ricci curvature.
\section{Preliminaries}\label{sec:Preliminaries}
In this section,
we review comparison geometry,
affine differential geometry and centro-affine differential geometry for convex bodies.
Throughout this paper,
we assume $n \geq 2$.
\subsection{Comparison geometry}\label{subsec:comparison}
Let $(M,g)$ be an $n$-dimensional complete Riemannian manifold and $f\in C^\infty(M)$.
We fix a point $p\in M$.
We denote the weighted measure by 
\begin{align*}
    \mathfrak{m}_{f,g} := \e^{-f}v_g.
\end{align*}
In order to state comparison theorems,
we prepare some notation following Lu--Minguzzi--Ohta \cite{lu2022comparison} and Kuwae--Sakurai \cite{kuwae2021rigidity}.
For $\kappa \in \mathbb{R}$,
we set 
\begin{align*}
    \mathrm{sn}_\kappa (s) := \begin{cases}
        \frac{1}{\sqrt{\kappa}} \sin(\sqrt{\kappa}\,s) &\text{ for } \kappa > 0,\\
        s &\text{ for } \kappa = 0,\\
        \frac{1}{\sqrt{-\kappa}} \sinh(\sqrt{-\kappa}\,s) &\text{ for } \kappa < 0,
    \end{cases}
    \qquad 
    \mathrm{ct}_\kappa(s) := \frac{\mathrm{sn}_\kappa'(s)}{\mathrm{sn}_\kappa (s)}.
\end{align*}
For a constant 
\begin{align*}
    C_\kappa := \inf\{ s > 0\ |\ \mathrm{sn}_\kappa (s) = 0\} = \begin{cases}
        \frac{\pi}{\sqrt{\kappa}} & \text{ for }\kappa > 0,\\
        +\infty & \text{ for } \kappa \leq 0,
    \end{cases}
\end{align*}
we set 
\begin{align*}
    \overline{\mathrm{sn}}_\kappa (s) := \begin{cases}
        \mathrm{sn}_\kappa (s) & \text{ for } 0 \leq s < C_\kappa,\\
        0 & \text{ for } s\geq C_\kappa.
    \end{cases}
\end{align*}
Let $g_{\mathbb{S}^{n-1}}$ be the standard Riemannian metric on $\mathbb{S}^{n-1}$ and 
\begin{align*}
    \omega_{n-1} := v_{g_{\mathbb{S}^{n-1}}} (\mathbb{S}^{n-1}).
\end{align*}
For $r\geq 0$,
we set 
\begin{align*}
    v(n,\kappa, r) := \omega_{n-1} \int_{0}^r \overline{\mathrm{sn}}_\kappa(\xi)^{n-1} \,\d \xi.
\end{align*}
We now move on to the {re-parametrization} by Wylie--Yeroshkin \cite{wylie2016geometry}.
For $\theta \in U_p M := \{ \theta \in T_p M \ |\ |\theta|_g = 1 \}$,
let $\gamma_{p,\theta}$ be a unit speed geodesic with $\gamma_{p,\theta}(0) = p$ and $\gamma_{p,\theta}'(0) = \theta$.
We set 
\begin{align*}
    \tau_{p}(\theta) := \sup \{ t > 0 \ |\ d_g(p,\gamma_{p,\theta}(t)) = t \} .
\end{align*}
Furthermore,
for $t\geq 0$ and $\theta \in U_p M$,
we define
\begin{align}\label{eq:def-s-tau}
     s_{f,p,\theta}(t) := \int_0^t \e^{-\frac{2f(\gamma_{p,\theta}(\xi))}{n-1}} \,\d \xi,\quad \tau_{f,p}(\theta) := s_{f,p,\theta} (\tau_{p}(\theta)).
\end{align}
We have the Bonnet--Myers and Cheng type rigidity properties as follows (see Wylie--Yeroshkin \cite[Theorem 2.2 and Theorem 4.16]{wylie2016geometry},
Lu--Minguzzi--Ohta \cite[Theorem 3.6]{lu2022comparison} and Kuwae--Sakurai \cite[Theorem 3.3 and Corollary 3.4]{kuwae2021rigidity}):
\begin{theorem}[\cite{wylie2016geometry,kuwae2021rigidity}]\label{thm:cheng}
Let $(M,g,\mathfrak{m}_{f,g})$ be an $n$-dimensional complete weighted Riemannian manifold.
For $\kappa > 0$,
we assume 
\begin{align*}
    \Ric_{g,f}^1  \geq (n-1) \kappa \,\e^{-\frac{4f}{n-1}}g.
\end{align*}
We set $\widehat{g} :=  \e^{-\frac{4f}{n-1}}g$.
Then we have 
\begin{align*}
    \mathrm{diam}(M,\widehat{g}) \leq \frac{\pi}{\sqrt{\kappa}}.
\end{align*}
We further assume that there exist $p,q\in M$ such that $f(p) = 0$ and 
\begin{align*}
    d_{\widehat{g}}(p,q) = \frac{\pi}{\sqrt{\kappa}}.
\end{align*}
Then $M$ is homeomorphic to $\mathbb{S}^{n}$,
$f$ is radial with respect to $p$,
and we have
\begin{align}\label{eq:metric:WY-thm-2-6}
    g = \d t^2 + \e^{\frac{2f(\gamma_{p,\theta}(t))}{n-1}} \mathrm{sn}_\kappa^2 \left( s_{f,p,\theta} (t)\right)g_{\mathbb{S}^{n-1}}
\end{align}
for $0 < t < d_g(p,q)$,
where $(t,\theta)$ is the geodesic polar coordinate around $p$.
\end{theorem}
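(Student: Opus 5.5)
We follow the Wylie--Yeroshkin reparametrization strategy. Fix $p$ and a unit-speed $g$-geodesic $\gamma=\gamma_{p,\theta}$, and use the parameter $s=s_{f,p,\theta}(t)$ from \eqref{eq:def-s-tau}, so that $\d s/\d t=\e^{-\frac{2f}{n-1}}$. Along $\gamma$ we consider the weighted volume density $\mathfrak{m}_{f,g}=\e^{-f}\mathcal{A}(t,\theta)\,\d t\,\d\theta$ in geodesic polar coordinates. The key object is
\begin{align*}
J(s):=\bigl(\e^{-\frac{2f}{n-1}}\e^{-f}\mathcal{A}\bigr)^{\frac{1}{n-1}}\circ t(s).
\end{align*}
The exponent is chosen so that the Riccati inequality for the mean curvature of distance spheres, combined with $\Ric_{g,f}^1=\Ric^{\nabla^{g,\varphi}}$ from \eqref{eq:WY-affine-ricci}, becomes, after changing variables to $s$,
\begin{align*}
J''(s)+\kappa J(s)\le 0,\qquad J(s)\sim s \text{ as } s\to 0.
\end{align*}
We derive this from the Bochner/Riccati identity $m'+|A|^2+\Ric(\gamma',\gamma')=0$. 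Write $m_f=m-\langle\nabla f,\gamma'\rangle$. Cauchy--Schwarz gives $|A|^2\ge m^2/(n-1)$, and the $N=1$ term $-\d f\otimes\d f/(1-n)$ is exactly what the reparametrization absorbs. The cross terms cancel only for the weight $\e^{-\frac{4f}{n-1}}$, which is why the bound is imposed relative to $\widehat g$. This computation is the main technical step: we must carefully track the factors $\e^{\pm\frac{2f}{n-1}}$ under $\d/\d t=\e^{-\frac{2f}{n-1}}\d/\d s$.

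For the diameter bound, Sturm comparison applied to $J''+\kappa J\le0$ with $J(0)=0$, $J'(0)=1$ gives $J(s)\le \mathrm{sn}_\kappa(s)$. Since $J$ is positive before the cut point, $\tau_{f,p}(\theta)\le\pi/\sqrt\kappa$. Independently, and more directly, the index form computed with variation fields $\sin(\sqrt\kappa\, s)E_i$ along a minimizing geodesic, written in the parameter $s$, shows that a minimizing $g$-geodesic of $s$-length greater than $\pi/\sqrt\kappa$ is not minimizing. We must then relate $s$-length to $\widehat g$-distance. Since $|\gamma'|_{\widehat g}=\e^{-\frac{2f}{n-1}}$, the $s$-length of a $g$-geodesic equals its $\widehat g$-length. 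The $\widehat g$-distance is at most the $\widehat g$-length of the $g$-minimizing geodesic, but that alone does not give the bound. The cleaner route is to recall that $\nabla^{g,\varphi}$-geodesics are reparametrized $g$-geodesics and to apply the second variation of $\widehat g$-length directly. By the projective relation, the $\widehat g$-second variation along $\widehat g$-geodesics is controlled by $\Ric_{g,f}^1$ as in Wylie--Yeroshkin. We therefore run the Myers argument for $\widehat g$-minimizing curves and conclude $\mathrm{diam}(M,\widehat g)\le\pi/\sqrt\kappa$.

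For rigidity, suppose $d_{\widehat g}(p,q)=\pi/\sqrt\kappa$. Comparing the weighted volumes of $\widehat g$-balls around $p$ and $q$ via the Bishop--Gromov monotonicity of $\int J^{n-1}/v(n,\kappa,\cdot)$, as in Cheng's argument, forces equality $J\equiv\mathrm{sn}_\kappa(s)$ for almost every, and hence every, $\theta$. Equality in all inequalities then yields $A=\frac{m}{n-1}\,\mathrm{Id}$ (umbilicity) and equality in the curvature bound along radial directions. The normalization $f(p)=0$ fixes the constants. Umbilicity with a prescribed mean curvature determines the metric in polar coordinates as a warped product $\d t^2+\phi(t,\theta)^2g_{\mathbb{S}^{n-1}}$. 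Unwinding $J$ gives $\phi=\e^{\frac{2f}{n-1}}\mathrm{sn}_\kappa(s_{f,p,\theta}(t))$, provided we also show that $f$ is radial. Radiality follows from the equality case, because the $s$-parametrization must reach every point at exactly $s=\pi/\sqrt\kappa$ simultaneously and the cut locus of $p$ is the single point $q$. The resulting description, with two cone points $p,q$ and a sphere bundle in between, gives $M\cong\mathbb{S}^n$. The hardest step is establishing that $f$ is radial and that the angular metric is exactly round; we would first try to extract it from the vanishing of the traceless second fundamental form together with the ODE satisfied by $\phi/\mathrm{sn}_\kappa$.
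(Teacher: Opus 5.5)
\textbf{The main gap is your argument that $f$ is radial.} Everything you extract from the equality case is information along the radial geodesics from $p$: $\widehat J_{f,p}(s,\theta)=\mathrm{sn}_\kappa^{n-1}(s)$, umbilic distance spheres, equality $\Ric^1_{g,f}(\gamma',\gamma')=(n-1)\kappa\,\e^{-\frac{4f}{n-1}}$ along rays, all rays reaching $q$ at $s=\pi/\sqrt\kappa$, and $\mathrm{Cut}(p)=\{q\}$. This data does not force $f$ to be radial.

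Here is a counterexample to that inference. Take the round $\mathbb{S}^n$, a unit vector $e$, some $0\neq w\perp e$, and $f=(n-1)\log h$ with $h(\xi)^2=(\langle\xi,e\rangle+\langle w,\xi\rangle)^2+\kappa|P_{e^{\perp}}\xi|^2$. This $h$ is the support function of a sheared ellipsoid. For a unit $\theta\perp e$,
$h(\gamma_{e,\theta}(t))^2=(\cos t+\langle w,\theta\rangle\sin t)^2+\kappa\sin^2t$. Hence $\mathrm{ct}_\kappa(s_{f,e,\theta}(t))=\cot t+\langle w,\theta\rangle$ and $\sin t=h\,\mathrm{sn}_\kappa(s_{f,e,\theta}(t))$. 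So:
\begin{itemize}
\item $f(e)=0$, and the round metric is exactly of the form \eqref{eq:metric:WY-thm-2-6};
\item every ray reaches $-e$ at $s=\pi/\sqrt\kappa$, $\mathrm{Cut}(e)=\{-e\}$, and the spheres are umbilic;
\item the radial curvature equality holds, because the Gram determinant of $h^2$ on $\mathrm{span}(e,\theta)$ equals $\kappa$ (see the formula for $g_K^{\mathbb{S}^*}$ in the proof of Proposition \ref{prop:ellipsoid-milman}).
\end{itemize}
Yet $f$ is not radial. This is the spherical version of Lemma \ref{lem:quadrics} with $\nabla h(0)\neq0$.

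What excludes this example is the curvature bound transverse to the rays. There the same Gram determinant equals $\lambda_1\lambda_2<\kappa$, where $\lambda_1,\lambda_2$ are the two smallest eigenvalues of the quadratic form $h^2$. So radiality must come from the full tensor inequality, just as the transverse Hessian inequality is used in Lemma \ref{lem:quadrics-further}. For instance, nonnegativity of $\Ric^1_{g,f}-(n-1)\kappa\,\e^{-\frac{4f}{n-1}}g$ plus equality on $\partial_t$ forces $\Ric^1_{g,f}(\partial_t,X)=0$ for $X\perp\partial_t$. This gives an ODE along each ray for $X s_{f,p,\theta}$, which you must then combine with smoothness at both $p$ and $q$. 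Your suggested route (traceless second fundamental form, plus the ODE for $\phi/\mathrm{sn}_\kappa$) uses only radial data, so it cannot succeed.

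\textbf{Other steps that need repair.}

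\emph{Wrong weight in $J$.} Since $\mathfrak m_{\frac{n+1}{n-1}f,g}=\e^{-f}\mathcal A\,\d s\,\d\theta$, the comparison quantity is $(\e^{-f}\mathcal A)^{1/(n-1)}=\widehat J_{f,p}^{1/(n-1)}$. Your $J^{n-1}$ is the density with respect to $\d t\,\d\theta$ instead. The conclusion itself shows this: \eqref{eq:metric:WY-thm-2-6} gives $\e^{-f}\mathcal A=\mathrm{sn}_\kappa^{n-1}(s)$, so in the rigid case your $J$ equals $\e^{-\frac{2f}{(n-1)^2}}\mathrm{sn}_\kappa(s)$, not $\mathrm{sn}_\kappa(s)$. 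Likewise the warping function is $\e^{\frac{f}{n-1}}\mathrm{sn}_\kappa(s)$, not $\e^{\frac{2f}{n-1}}\mathrm{sn}_\kappa(s)$, and your index-form test fields need a factor $\e^{\frac{f}{n-1}}$.

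\emph{The diameter bound.} You discard the argument that works. Sturm comparison gives $\tau_{f,p}(\theta)\le\pi/\sqrt\kappa$. The $\widehat g$-length of a $g$-geodesic is its $s$-length, so a $g$-minimizing geodesic from $p$ to $x$ is a curve of $\widehat g$-length at most $\pi/\sqrt\kappa$. That is all an upper bound on $d_{\widehat g}(p,x)$ needs.

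Your replacement route does not work, for two reasons:
\begin{itemize}
\item $\nabla^{\widehat g}$ differs from $\nabla^{g,\varphi}$ (with $\varphi=f/(n-1)$) by terms including $2g(X,Y)\nabla^g\varphi$, which is not of projective type. So $\widehat g$-geodesics are not reparametrized $g$-geodesics, and their second variation is not controlled by $\Ric^1_{g,f}$.
\item $\widehat g$ is generally incomplete, so $\widehat g$-minimizers need not exist. The $\kappa>0$ model of Proposition \ref{prop:quadrics} lives on $M=\mathbb{R}^n$ with $\widehat g$ incomplete.
\end{itemize}

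\emph{Topology.} The same example shows $M$ need not be compact a priori. So $\mathrm{Cut}(p)=\{q\}$, and the two-pole picture giving $M\cong\mathbb{S}^n$, must actually be proved. One way is from $s_{f,p}+s_{f,q}=\pi/\sqrt\kappa$, which the volume equality gives almost everywhere, combined with the comparability of $s_{f,q}$ and $d_g(q,\cdot)$ near $q$.
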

We define the {re-parametrized distance} as follows: 
\begin{align}\label{eq:def-reparametrized-distance}
    s_{f,p}(x) := \inf_\gamma \int_0^{d_g(p,x)} \e^{-\frac{2f(\gamma(\xi))}{n-1}}\,\d \xi,
\end{align}
where $\gamma : [0,d_g(p,x)] \rightarrow M$ runs over all unit speed minimal geodesics from $p$ to $x$.
We define the associated ball as follows:
\begin{align*}
    B_{g,f,r}(p) := \{ x\in M\ |\ s_{f,p}(x) < r \}.
\end{align*}
We denote the ball by $B_{g,r}(p) := \{ x\in M \ |\ d_g(p,x) < r \}$.
For $t \in (0,\tau_p(\theta))$,
let $J_p(t,\theta)$ be the volume element of the $t$-level set of $d_g(p, \cdot)$ at $\gamma_{p,\theta}(t)$,
and let 
\begin{align}\label{eq:def-jacobian}
    J_{f,p}(t,\theta) := \e^{-f(\gamma_{p,\theta}(t))} J_p(t,\theta).
\end{align}
Also,
for $s \in (0, \tau_{f,p}(\theta))$,
we set 
\begin{align}\label{eq:def-jacobian-2}
    \widehat{J}_{f,p}(s, \theta) := J_{f,p}(t_{f,p,\theta}(s),\theta),
\end{align}
where $t_{f,p,\theta}$ is the inverse function of $s_{f,p,\theta}$.
Furthermore,
we set 
\begin{align}\label{eq:def-truncated-jacobian}
    \overline{J}_{f,p} (s,\theta) := \begin{cases}
        \widehat{J}_{f,p}(s,\theta) &\text{ for } s < \tau_{f,p}(\theta),\\
        0 &\text{ for } s \geq \tau_{f,p}(\theta),
    \end{cases} 
\end{align}
The Bishop--Gromov type volume comparison theorem is as follows (see Wylie--Yeroshkin \cite[Corollary 4.6]{wylie2016geometry},
Lu--Minguzzi--Ohta \cite[Theorem 3.11]{lu2022comparison} and Kuwae--Sakurai \cite[Proposition 4.4]{kuwae2021rigidity}):
\begin{theorem}[\cite{wylie2016geometry,lu2022comparison,kuwae2021rigidity}]\label{thm:BG}
Let $(M,g,\mathfrak{m}_{f,g})$ be an $n$-dimensional complete weighted Riemannian manifold.
For $\kappa \in\mathbb{R} $,
we assume 
\begin{align*}
    \Ric_{g,f}^1  \geq (n-1) \kappa \,\e^{-\frac{4f}{n-1}}g.
\end{align*}
For $p\in M$ and \textcolor{black}{$0 < r < R$},
we have 
\begin{align*}
    \frac{\mathfrak{m}_{\frac{n+1}{n-1}f, g} \left( B_{g,f,R}(p) \right)}{\mathfrak{m}_{\frac{n+1}{n-1}f, g} \left( B_{g,f,r}(p) \right)} \leq \frac{v(n,\kappa,R)}{v(n,\kappa,r)}.
\end{align*}
\end{theorem}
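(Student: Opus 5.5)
The plan is the standard Wylie--Yeroshkin argument: compute volumes in geodesic polar coordinates around $p$, change the radial variable from $t$ to the re-parametrized distance $s$, and show that the weighted Jacobian, expressed in $s$, is dominated by $\mathrm{sn}_\kappa^{n-1}$ in the monotone-ratio sense. In polar coordinates we have $\d\mathfrak{m}_{f,g}=J_{f,p}(t,\theta)\,\d t\,\d\theta$ on the segment domain. The measure in the statement carries the weight $\e^{-\frac{n+1}{n-1}f}=\e^{-f}\e^{-\frac{2f}{n-1}}$, and $\d s=\e^{-\frac{2f}{n-1}}\d t$ along $\gamma_{p,\theta}$. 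Hence
\begin{align*}
\mathfrak{m}_{\frac{n+1}{n-1}f,g}\bigl(B_{g,f,r}(p)\bigr)=\int_{U_pM}\int_0^{r}\overline{J}_{f,p}(s,\theta)\,\d s\,\d\theta .
\end{align*}
This uses that, for $x$ outside the cut locus, $s_{f,p}(x)=s_{f,p,\theta}(t)$, since the minimal geodesic is unique there, and that the cut locus has measure zero. So it suffices to prove that, for each fixed $\theta$, the function $s\mapsto \overline{J}_{f,p}(s,\theta)/\overline{\mathrm{sn}}_\kappa(s)^{n-1}$ is non-increasing on $(0,\infty)$. The ratio of integrals then follows from the standard lemma (Gromov's): if $A/B$ is non-increasing with $A,B\ge0$, then $\int_0^RA/\int_0^rA\le\int_0^RB/\int_0^rB$. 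Integrating over $\theta$ and using $\int_0^r \overline{\mathrm{sn}}_\kappa^{n-1}=v(n,\kappa,r)/\omega_{n-1}$ gives the claim.

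For the monotonicity, along $\gamma=\gamma_{p,\theta}$ I set $h(s):=\widehat J_{f,p}(s,\theta)^{1/(n-1)}$ and aim for the Riccati-type inequality $h''+\kappa h\le 0$ in $s$, on $(0,\tau_{f,p}(\theta))$. To derive it, I would use the Riccati equation for the mean curvature $m=(\log J_p)'$ of distance spheres:
\begin{align*}
m'+\frac{m^2}{n-1}\le -\Ric_g(\gamma',\gamma').
\end{align*}
The sharper form keeps $|A|^2$, which one drops via Cauchy--Schwarz. With $\varphi=f/(n-1)$ and $\psi:=\log J_{f,p}=\log J_p-f$, the identity $\Ric^1_{g,f}(\gamma',\gamma')=\Ric_g+(f\circ\gamma)''$ (valid for $N=1$ since the $\d f\otimes\d f/(N-n)$ term becomes $\d f^2/(n-1)$ — I will track this carefully) turns the inequality into one for $\psi$ in $t$. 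Changing the variable to $s$, where $\frac{\d}{\d s}=\e^{2\varphi}\frac{\d}{\d t}$, the cross terms should cancel exactly: that is the point of the factor $\e^{-4f/(n-1)}$ in the curvature bound and of the reparametrization. One then obtains
\begin{align*}
\frac{\d^2}{\d s^2}\psi+\frac{1}{n-1}\Bigl(\frac{\d}{\d s}\psi\Bigr)^2\le -\e^{4\varphi}\Ric^1_{g,f}(\gamma',\gamma')\le-(n-1)\kappa,
\end{align*}
which is equivalent to $h''+\kappa h\le0$. Equivalently, this is the Jacobi-field computation for the projectively equivalent connection $\nabla^{g,\varphi}$ of \eqref{eq:WY-conn}, whose geodesics are the reparametrized $\gamma$ in the parameter $s$ and whose Ricci curvature is $\Ric^1_{g,f}$ by \eqref{eq:WY-affine-ricci}.

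The asymptotics near $s=0$ are $h(s)\sim \e^{-f(p)/(n-1)}s$, since $J_p\sim t^{n-1}$ and $s\sim \e^{-2f(p)/(n-1)}t$. Sturm comparison between $h$ and $\mathrm{sn}_\kappa$ then gives that $(h\,\mathrm{sn}_\kappa'-h'\,\mathrm{sn}_\kappa)$ is non-increasing from $0$. Hence $h/\mathrm{sn}_\kappa$ is non-increasing on $(0,\min\{\tau_{f,p}(\theta),C_\kappa\})$, and $h$ must vanish no later than $C_\kappa$, so that $\tau_{f,p}(\theta)\le C_\kappa$ when $\kappa>0$. Beyond the cut point, the truncation to $0$ in $\overline J_{f,p}$ and $\overline{\mathrm{sn}}_\kappa$ preserves monotonicity, with the convention $0/0=0$.

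The main obstacle is the exact algebra of the change of variables in the second step: verifying that the terms involving $f'$ and $f''$ combine, after passing to $s$, precisely into $\e^{4\varphi}\Ric^1_{g,f}$ with no leftover sign-indefinite term. I would first check the exponents in the model case of \eqref{eq:metric:WY-thm-2-6} and then do the general computation. A secondary point is measurability and the claim that $s_{f,p}$ agrees with $s_{f,p,\theta}$ almost everywhere, which is handled by the cut locus having measure zero.
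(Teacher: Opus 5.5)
Your proposal is correct and follows essentially the route the paper relies on: the paper cites Wylie--Yeroshkin, Lu--Minguzzi--Ohta and Kuwae--Sakurai for this result, and in the proof of Corollary~\ref{cor:C-BG} it names exactly your ingredients, namely the polar-coordinate formula $\mathfrak{m}_{\frac{n+1}{n-1}f,g}(B_{g,f,r}(p))=\int_{U_pM}\int_0^r\overline{J}_{f,p}(s,\theta)\,\d s\,\d\theta$, the Riccati inequality along minimal geodesics in the parameter $s$, and the monotone-ratio lemma. Three small slips need fixing, none of which affects the argument: the correct identity is $\Ric^1_{g,f}(\gamma',\gamma')=\Ric_g(\gamma',\gamma')+(f\circ\gamma)''+\frac{((f\circ\gamma)')^2}{n-1}$ (this extra term is exactly what makes the cross terms cancel to give $\frac{\d^2\psi}{\d s^2}+\frac{1}{n-1}(\frac{\d\psi}{\d s})^2\le-\e^{4\varphi}\Ric^1_{g,f}(\gamma',\gamma')$), your $h$ satisfies $h(s)\sim\e^{f(p)/(n-1)}s$ rather than $\e^{-f(p)/(n-1)}s$, and it is $h'\,\mathrm{sn}_\kappa-h\,\mathrm{sn}_\kappa'$, not its negative, that is non-increasing from $0$.
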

Although this subsection does not review the rigidity results for the Bishop--Gromov type volume comparison theorem,
we note that rigidity properties of Bishop--Gromov type volume comparison theorems are known (see \cite[Theorem 4.17 and 4.18]{wylie2016geometry} and \cite[Theorem 4.8]{kuwae2021rigidity}).
The metric of the form \eqref{eq:metric:WY-thm-2-6} for $\kappa \in \mathbb{R}$ appears in each rigidity phenomenon.
\subsection{Affine differential geometry}\label{subsec:affine-differential-geometry}
In this subsection,
we introduce basic notions in affine differential geometry.
Let $x : M\rightarrow \mathbb{R}^{n+1}$ be an immersion,
where $M$ is an $n$-dimensional smooth manifold.
Also,
we denote \textcolor{black}{the standard flat connection} on $\mathbb{R}^{n+1}$ by $\overline{D}$.
A smooth vector field $\xi$ along $x$ is called transverse if
\begin{align*}
    \xi_p \notin x_* (T_p M)
\end{align*}
for every $p\in M$.
For the transverse vector field $\xi$,
there exist 
a shape operator $S^\xi \in \Gamma(T^*M \otimes T M)$,
a transversal connection form $\tau^\xi \in \Omega^1(M)$,
a symmetric bilinear form $\mathfrak{g}^\xi$ on $T M$,
and a torsion-free affine connection $\nabla^\xi$ on $M$ such that they satisfy the following Gauss--Weingarten type equation:
\begin{align}\label{eq:affin-GW}
    \overline{D}_X (x_* Y) &= x_* (\nabla^\xi_X Y) + \mathfrak{g}^\xi (X,Y)\xi,\\
    \overline{D}_X \xi &= - x_* (S^\xi X) + \tau^{\xi} (X)\xi.\nonumber
\end{align}
\begin{remark}
If $\overline{\delta}$ denotes the standard Euclidean metric on $\mathbb{R}^{n+1}$,
then $\overline{D} = \nabla^{\overline{\delta}}$,
where $\nabla^{\overline{\delta}}$ is the Levi-Civita connection of $\overline{\delta}$.
Also,
there is another convention in \eqref{eq:affin-GW}.
Indeed,
$\overline{D}_X (x_* Y)$ means $(x^* \overline{D})_X (x_* Y)$,
where $x^* \overline{D}$ is the pullback connection of $\overline{D}$ by $x$.
\end{remark}
Below,
we consider locally strongly convex immersion $x$.
We follow the argument in Loftin \cite{loftin2010survey} and Benoist--Hulin \cite{benoist2013cubic}.
A transverse vector field $\xi$ is called the {affine normal} if the following three conditions hold:
\begin{itemize}
    \item[(a)] $\mathfrak{g}^\xi$ is positive  definite.
    \item[(b)] $\tau^\xi = 0$.
    \item[(c)] $\det_{1 \leq i,j \leq n}  \mathfrak{g}^\xi (E_i,E_j)   = \det \left( x_* E_1, \cdots, x_* E_n, \xi \right)^2$ for every frame $\{E_i\}_{i=1}^n$ of $T M$.
\end{itemize}
For the affine normal $\xi$,
the associated connection $\nabla^\xi$ and metric $\mathfrak{g}^\xi$ are called the {Blaschke connection} and {affine metric},
which we denote by $\nabla^B$ and $\mathfrak{g}^B$.
The {affine mean curvature} is defined by $H^\xi := \frac{1}{n}\mathrm{tr} S^\xi$.
Furthermore,
the hypersurface is called an {affine sphere} if $S^\xi = H \mathrm{Id}$ for some constant $H$.
In particular,
$M$ is called {elliptic} if $H > 0$,
{parabolic} if $H = 0$,
and {hyperbolic} if $H < 0$.
The point where all affine normal lines intersect is called the center.
An affine sphere without the center is said to be {improper}.
Otherwise,
it is called {proper}.
We have the following local representation using the radial graphs of the form \eqref{eq:gigena-radial-graph} (see Loftin \cite[Proposition 3]{loftin2010survey} and Loftin--McIntosh \cite[Subsection 2.1.4]{loftin2016cubic}):
\begin{theorem}[\cite{loftin2010survey}]\label{thm:classification}
We have the following assertions:
\begin{itemize}
    \item[(i)] A hyperbolic affine sphere with center $0$ and affine mean curvature $-1$ is locally given by the radial graph:
       $ F_h(\Omega) := \left\{ F_h(x) \ |\ x = (x_1, \cdots, x_n) \in \Omega  \right\}$,
    where $\Omega$ is a domain in $\mathbb{R}^n$ and $h \in \textcolor{black}{C^\infty(\Omega)}$ is a concave positive function satisfying the following Monge--Amp\`{e}re equation:
    \begin{align}\label{eq:thm:classification-1}
        \det_{1 \leq i,j \leq n}\left(  - \partial_i \partial_j h  \right) = h^{-(n+2)}.
    \end{align}
    Furthermore,
    the affine metric is written as follows:
    \begin{align*}
        \mathfrak{g}^B(\partial_i, \partial_j) = - \frac{\partial_i \partial_j h}{h}.
    \end{align*}
    \item[(ii)] An elliptic affine sphere with center $0$ and affine mean curvature $1$ is \textcolor{black}{locally} given by the radial graph:
    $F_h(\Omega) := \{ F_h(x)\ |\ x = (x_1, \cdots, x_n) \in \Omega\}$,
    where $\Omega$ is a domain in $\mathbb{R}^n$ and $h \in C^\infty(\Omega)$ is a convex positive function satisfying the following Monge--Amp\`{e}re equation:
    \begin{align}\label{eq:thm:classification-2}
        \det_{1 \leq i,j \leq n} \left( \partial_i \partial_j h \right)= h^{-(n+2)}.
    \end{align}
    Furthermore,
    the affine metric is written as follows:
    \begin{align*}
        \mathfrak{g}^B(\partial_i, \partial_j) =  \frac{\partial_i \partial_j h}{h}.
    \end{align*}
\end{itemize}
\end{theorem}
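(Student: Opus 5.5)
The plan is to verify both parts of Theorem \ref{thm:classification} by direct computation for the radial graph $F_h(x)=h(x)^{-1}(1,x)$, with the position vector as candidate transversal field. We take $\xi := \mp F_h$ ($+F_h$ in the hyperbolic case, $-F_h$ in the elliptic case) and show three things. First, $\xi$ is the affine normal, i.e.\ conditions (a)--(c) hold. Second, the shape operator is $\pm\mathrm{Id}$. Third, $\mathfrak g^B$ has the stated form.

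\textbf{Step 1 (structure equations).} Write $e_0=(1,0)$ and $e_i=(0,\partial_i)$. With $h_i=\partial_i h$, we have
\begin{align*}
\partial_i F_h = -\frac{h_i}{h}F_h + \frac{1}{h}e_i,
\end{align*}
so $\{\partial_i F_h\}$ together with $F_h$ is a frame. Differentiating once more and using that $e_i$ is constant gives
\begin{align*}
\partial_j\partial_i F_h = -\frac{h_{ij}}{h}F_h - \frac{h_i}{h}\partial_j F_h - \frac{h_j}{h}\partial_i F_h .
\end{align*}
Here we expanded $\partial_j(h^{-1})e_i = -\frac{h_j}{h}\cdot\frac{1}{h}e_i$ and rewrote $\frac1h e_i = \partial_iF_h + \frac{h_i}{h}F_h$. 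The $F_h$-coefficients $\frac{h_ih_j}{h^2}$ then cancel.

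Comparing with \eqref{eq:affin-GW} for $\xi=\epsilon F_h$ gives $\mathfrak g^\xi_{ij} = -\epsilon\, h_{ij}/h$. It also gives $\nabla^\xi = \partial - \d\log h\otimes \mathrm{Id} - \mathrm{Id}\otimes \d\log h$, which is projectively flat, consistent with the introduction. Since $\overline D_X\xi=\epsilon\, x_*X$, we get $\tau^\xi=0$ and $S^\xi=-\epsilon\,\mathrm{Id}$.

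\textbf{Step 2 (choice of sign and normalization).} For $\epsilon=+1$ (so $\xi = F_h$) we need $-h_{ij}$ positive definite, i.e.\ $h$ concave. In that case $S=-\mathrm{Id}$, giving $H=-1$, the hyperbolic case (i) with $\mathfrak g^B=-h_{ij}/h$. For $\epsilon=-1$ we need $h$ convex, and then $H=1$ with $\mathfrak g^B=h_{ij}/h$, which is case (ii). This settles condition (a) and the formulas for the affine metric.

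\textbf{Step 3 (volume normalization (c)).} This is the key step, where the Monge--Amp\`ere equations enter. Since the $F_h$-components drop out of the determinant,
\begin{align*}
\det(\partial_1F_h,\dots,\partial_nF_h,\epsilon F_h) = \epsilon\det\Bigl(\tfrac1h e_1,\dots,\tfrac1h e_n,\tfrac1h(e_0+\textstyle\sum x_ie_i)\Bigr) = \pm h^{-(n+1)}.
\end{align*}
Condition (c) in the frame $\partial_i$ then reads
\begin{align*}
\det(\mp h_{ij})\,h^{-n} = h^{-2(n+1)},
\end{align*}
that is, $\det(\mp \partial_i\partial_j h)=h^{-(n+2)}$. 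This is exactly \eqref{eq:thm:classification-1} and \eqref{eq:thm:classification-2}. Condition (c) for a general frame follows, since both sides scale by $\det(A)^2$ under a change of frame. By uniqueness of the affine normal, $\xi$ is the Blaschke normal, and $F_h(\Omega)$ is an affine sphere centered at $0$ with the stated $H$.

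\textbf{Step 4 (local converse).} Given a proper affine sphere with center $0$ and $H=\mp1$, the affine normal is $\xi=-H\,x$, since $S=H\,\mathrm{Id}$ and the normal lines pass through $0$. Locally the hypersurface is transverse to rays through the origin, so after a linear change of coordinates it can be written as a radial graph $x/x_0=(1,y)$ with $h:=1/x_0$. Steps 1--3 run backwards then force the concavity or convexity of $h$ and the Monge--Amp\`ere equation. The main obstacle is careful sign bookkeeping in Step 3: orientation and the sign of $\epsilon$ are only fixed up to squaring, and $\det(\mp h_{ij})$ must be positive, which matches the definiteness obtained in Step 2.
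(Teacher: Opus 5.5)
Your proposal is correct and is essentially the computation the paper carries out in Remark~\ref{rem:affine-normal-radial-graph} together with Lemma~\ref{lem:radial-graph}: the Gauss--Weingarten decomposition of $\overline{D}_{\partial_i}((F_h)_*\partial_j)$ with $\xi=\pm F_h$ gives (a), (b), $S^\xi=\mp\mathrm{Id}$ and $\mathfrak{g}^\xi=\mp\partial_i\partial_j h/h$, and the identity $\det((F_h)_*\partial_1,\dots,(F_h)_*\partial_n,F_h)^2=h^{-2(n+1)}$ turns condition (c) into the Monge--Amp\`ere equation. The paper itself leaves the converse (every such affine sphere is locally a radial graph) to Loftin. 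Your Step~4 handles it correctly provided the linear change of coordinates is unimodular (e.g.\ a rotation), since a general element of $GL_{n+1}(\mathbb{R})$ rescales the Blaschke normal and hence $H$.
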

\begin{remark}\label{rem:affine-normal-radial-graph}
We explicitly calculate the affine normal.
For $\eps \in \{\pm{1}\}$,
we denote $\xi_{\eps, h} := -\eps F_h$.
The case $\eps = -1$ corresponds to Theorem \ref{thm:classification} (i) and the case $\eps  = 1$ corresponds to Theorem \ref{thm:classification} (ii).
We show that $\xi_{\eps, h}$ is the affine normal in each respective case. 
We check the three conditions (a) (b) (c) in the definition of the affine normal above.
It follows that
\begin{align}
    \overline{D}_{\partial_i} ((F_h)_* \partial_j) &=  (F_h)_* \left( -\frac{\partial_i h}{h}\partial_j - \frac{\partial_j h}{h}\partial_i \right) + \frac{\eps \partial_i \partial_j h }{h} \xi_{\eps,h},\label{eq:rem-GW-1}\\
    \overline{D}_{\partial_i} \xi_{\eps, h} &= - (F_h)_* (\eps \partial_i).\label{eq:rem-GW-2}
\end{align}
We also refer to the subsequent Lemma \ref{lem:radial-graph} for calculations of \eqref{eq:rem-GW-1}.
This implies the conditions (a) and (b).
Moreover,
we have 
\begin{align}\label{eq:rem:affine-normal-radial-graph-1}
    \det \left( (F_h)_* \partial_1, \cdots, (F_h)_* \partial _n, \xi_{\eps,h} \right)^2 = \det \left( (F_h)_* \partial_1, \cdots, (F_h)_* \partial _n, F_h \right)^2 = h^{-2(n+1)}.
\end{align}
On the other hand,
together with Theorem \ref{thm:classification},
we arrive at
\begin{align*}
    \det_{1 \leq i,j \leq n} \mathfrak{g}^{\xi_{\eps,h}} \left( \partial_i, \partial_j  \right) = h^{-n} \det (\eps \Hess_\delta h) = h^{-2(n+1)}.
\end{align*}
Combining this with \eqref{eq:rem:affine-normal-radial-graph-1},
we see that the condition (c) holds.
\end{remark}
\begin{example}
For $x  \in\mathbb{B}^n := \{ x \in \mathbb{R}^n \ |\ |x| < 1 \}$,
we set 
\begin{align*}
    h(x) := \sqrt{1 - |x|^2}.
\end{align*}
This solves the Monge--Amp\`{e}re equation \eqref{eq:thm:classification-1},
and the associated radial graph \eqref{eq:gigena-radial-graph} is the upper sheet of the two-sheeted hyperboloid (see Sasaki \cite[Example 2]{sasaki1980hyperbolic}).
\end{example}
\begin{example}
For $x \in \mathbb{R}^n$,
we set
\begin{align*}
    h(x) := \sqrt{1 + |x|^2}.
\end{align*}
This solves the Monge--Amp\`{e}re type equation \eqref{eq:thm:classification-2}.
We see that the associated radial graph $F_h$ is the upper hemisphere of the \textcolor{black}{unit sphere} (see also Loftin--McIntosh \cite[Section 2.1.4]{loftin2016cubic} and Loftin \cite[Proposition 3]{loftin2010survey}).
\end{example}
The projective flatness of the Blaschke connection is known as follows (see Loftin \cite[Proposition 5]{loftin2010survey}):
\begin{theorem}[\cite{loftin2010survey}]
The Blaschke connection on a proper affine sphere is projectively flat.
\end{theorem}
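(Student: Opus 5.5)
The plan is to prove the statement directly from Theorem \ref{thm:classification}, taking an affine sphere $F_h(\Omega)$ with center $0$ and affine mean curvature $H=\pm 1$. Since $S^\xi = H\,\mathrm{Id}$, the affine normal is $\xi = -H F$, a constant multiple of the position vector, as computed in Remark \ref{rem:affine-normal-radial-graph}. We work in the affine chart $x\in\Omega$ and compare $\nabla^B$ with the flat connection $\partial$ of $\mathbb{R}^n$. Equation \eqref{eq:rem-GW-1} gives
\begin{align*}
    \nabla^B_{\partial_i}\partial_j = -\frac{\partial_i h}{h}\partial_j - \frac{\partial_j h}{h}\partial_i .
\end{align*}
In other words, with $\varphi := \log h$,
\begin{align*}
    \nabla^B_X Y = \partial_X Y - \d\varphi(X)Y - \d\varphi(Y)X .
\end{align*}
This is exactly of the form \eqref{eq:WY-conn}, with the Euclidean flat connection in place of $\nabla^g$. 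It follows that $\nabla^B$ and the flat connection $\partial$ have the same unparametrized geodesics, namely straight lines in $\Omega$, which is Weyl's criterion for projective equivalence. Hence $\nabla^B$ is projectively flat on each such chart.

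To make this rigorous, I would verify \eqref{eq:rem-GW-1} carefully from $F_h = h^{-1}(1,x)$. Differentiating gives
\begin{align*}
    \partial_j F_h = -\frac{\partial_j h}{h}F_h + h^{-1}(0,e_j) .
\end{align*}
Differentiating once more and re-expressing $(0,e_j)$ as $h\,\partial_j F_h + (\partial_j h) F_h$, we then split the result into its tangential part and its $F_h$-component. The tangential part is the formula above. The normal component is $\frac{\partial_i\partial_j h}{h}$ times $F_h$, which after rescaling by $\xi$ is the affine metric from Theorem \ref{thm:classification}. This computation is routine.

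The remaining point is globalization. Projective flatness is a local notion: it requires an atlas in which the connection is projectively equivalent to a flat one. Every point of a proper affine sphere has a neighborhood that, after a centro-affine transformation $x\mapsto Ax$, is a radial graph as in Theorem \ref{thm:classification}. Since $\nabla^B$ and $\xi$ are equivariant under linear maps fixing the center, the local statement transfers to every point.

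The main obstacle is ensuring that this local representation applies at every point of the sphere, since Theorem \ref{thm:classification} is only a local statement. To handle this, I would choose $A$ so that the ray through the given point is not parallel to the hyperplane $\{x_0=0\}$; this is always possible because $\xi$ is transverse, so $F$ is not tangent to the sphere. If a coordinate-free argument is preferred, one can instead use the Weyl projective curvature and Cotton tensors. These vanish for the connection above because $\partial$ is flat and the Ricci tensor of $\nabla^B$ is symmetric, with the case $n=2$ handled by the Cotton tensor.
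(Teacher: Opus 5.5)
Your proposal is correct and follows essentially the same route as the paper: in the remark after the statement, the paper reads off from \eqref{eq:rem-GW-1} that $(F_h)^*(\nabla^B) = \nabla^{\delta,\log h}$ and concludes projective flatness from the flatness of $\nabla^\delta$, relying on the local radial-graph representation of Theorem \ref{thm:classification} without spelling out the globalization step you add. One harmless slip: the $F_h$-component of $\overline{D}_{\partial_i}((F_h)_*\partial_j)$ is $-\partial_i\partial_j h/h$, not $+\partial_i\partial_j h/h$ (it becomes $\eps\,\partial_i\partial_j h/h$ against $\xi_{\eps,h}=-\eps F_h$), but only the tangential part matters for projective flatness.
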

\begin{remark}
Here,
we explicitly write down the Blaschke connection $\nabla^B$ on the radial graph $F_h$.
We use the notation in Remark \ref{rem:affine-normal-radial-graph}.
Also,
let $\delta$ denote the standard flat metric on $\mathbb{R}^n$,
and we denote the standard flat connection on $\mathbb{R}^n$ by $\nabla^\delta$.
We note that $\nabla^\delta$ is the Levi-Civita connection of $\delta$.
It follows from \eqref{eq:rem-GW-1} that
\begin{align*}
    \left((F_h)^*(\nabla^B)\right)_{\partial_i} \partial_j = -\frac{\partial_i h}{h}\partial_j - \frac{\partial_j h}{h}\partial_i,
\end{align*}
where $(F_h)^*(\nabla^B)$ is the pull-back of $\nabla^B$ by $F_h$.
This implies 
\begin{align}
    \left((F_h)^*(\nabla^B)\right)_X Y = \textcolor{black}{\nabla^\delta}_X Y - X (\log h)Y - Y (\log h)X = \nabla^{\delta, \log h}_X Y,
\end{align}
where $\nabla^{\delta, \log h}$ is defined in \eqref{eq:WY-conn}.
Since $\nabla^\delta$ is the flat connection,
$(F_h)^*(\nabla^B)$ is projectively flat.
In particular,
$\nabla^B$ is projectively flat.
\end{remark}
As mentioned in Section \ref{sec:intro},
there are many examples of global hyperbolic affine spheres.
Indeed,
for a bounded convex domain $\Omega$ in $\mathbb{R}^n$,
Cheng--Yau \cite{cheng1977regularity} showed that there exists a unique positive concave solution $h \in C^\infty(\Omega) \cap C(\overline{\Omega})$ to the following Dirichlet problem of the Monge--Amp\`{e}re equation:
\begin{align}\label{eq:CY-boundary-problem}
        \det_{1 \leq i,j \leq n} \left( -\partial_i \partial_j h \right) = h^{-(n+2)},\quad h|_{\partial \Omega} = 0.
\end{align}
\textcolor{black}{Gigena \cite{gigena1981conjecture} showed} that the radial graph $F_h$ produces an example of a hyperbolic affine sphere asymptotic to the cone $\{ t(1,x) \ |\ x \in \Omega, t > 0\}$ (see also Loftin \cite[Section 7]{loftin2010survey} and Benoist--Hulin \cite[Theorem 2.5]{benoist2013cubic}).
\begin{example}
Calabi \cite{calabi1972complete} considered the following example:
\begin{align}\label{eq:Calabi-example}
    \left\{ (x_i)_{0 \leq i \leq n} \in \mathbb{R}^{n+1}\ \bigg|\  x_i > 0,\ \prod_{i=0}^{n} x_i = C \right\}.
\end{align}
The case $n=2$ was also considered by \c{T}i\c{t}eica (see \cite{loftin2010survey}).
This is a hyperbolic affine sphere.
Indeed,
we see that the Calabi example is realized as a radial graph of a Cheng--Yau type solution of \eqref{eq:CY-boundary-problem} on a simplex (see also Sasaki \cite[Example 3]{sasaki1980hyperbolic} and Loftin \cite{loftin2010survey}).
\end{example}
\begin{remark}\label{rem:Calabi-example-CY-2}
\textcolor{black}{For the Calabi example,
there is another viewpoint.}
For $\mathcal{O}_n := (0,+\infty)^n$ and $x \in \mathcal{O}_n$,
we set 
\begin{align*}
    h_{\mathcal{O}_n}(x) := \sqrt{n+1} \left(\prod_{i=1}^n x_i\right)^{\frac{1}{n+1}}.
\end{align*}
By direct calculations,
we see $h_{\mathcal{O}_n}$ satisfies the Monge--Amp\`{e}re equation \eqref{eq:thm:classification-1} (see e.g., Li--Simon--Zhao--Hu \cite[Example 3.1]{li2015global}). 
Also,
the radial graph $\frac{1}{h_{\mathcal{O}_n}(x)}(1,x)$ with $x \in \mathcal{O}_n$ coincides with the Calabi example \eqref{eq:Calabi-example} with suitable constant $C$.
\end{remark}
\subsection{Centro-affine differential geometry for convex bodies}\label{preliminary:milman}
We now move on to centro-affine differential geometry.
When the position vector field:
\begin{align*}
    \xi_x (p) := x(p)
\end{align*}
is a transversal vector field,
we call $\nabla^{\xi_x}$ the {centro-affine connection}.
\begin{remark}
\textcolor{black}{
On proper affine spheres with center at the origin,
the centro-affine connection and the Blaschke connection coincide since the affine normal is a constant multiple of the position vector as we observed in Remark \ref{rem:affine-normal-radial-graph}.}
\end{remark}
The sign convention of $\mathfrak{g}^\xi$ is that of Loftin \cite{loftin2010survey}.
In Milman \cite{milman2023centro},
the opposite sign is used for the centro-affine metric.
In this subsection,
we follow the sign convention in \cite{milman2023centro}.
In order to avoid confusion,
we denote
\begin{align*}
    g^\xi := - \mathfrak{g}^\xi.
\end{align*}
\textcolor{black}{For non-degenerate $g^\xi$},
the {$g^\xi$-dual connection} $(\nabla^\xi)^*$ is defined by the following relation:
\begin{align*}
    X (g^\xi (Y,Z)) = g^\xi(\nabla^\xi_X Y, Z) + g^\xi (Y, (\nabla^\xi)^*_X Z).
\end{align*}
We note that the centro-affine connection is invariant under the centro-affine transformation.
Indeed,
\textcolor{black}{we have the following assertion}:
\begin{proposition}\label{prop:centro-affine-transformation}
Let $x : M \rightarrow \mathbb{R}^{n+1}$ be an immersion,
where $M$ is an $n$-dimensional smooth manifold.
We assume $\xi_x := x$ is a transversal vector field.
For $A \in GL(\mathbb{R}^{n+1})$,
we set $\widetilde{x} := A \circ x$.
Then we have 
\begin{align*}
    \nabla^{\xi_x} = \nabla^{\xi_{\widetilde{x}}},\quad g^{\xi_x} = g^{\xi_{\widetilde{x}}}.
\end{align*}
\end{proposition}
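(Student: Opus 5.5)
The plan is to apply the linear map $A$ to the Gauss--Weingarten equation \eqref{eq:affin-GW} for $x$ with $\xi = \xi_x$, and then to read off the structure data of $\widetilde{x}$ by uniqueness of the decomposition into tangential and transversal parts. The argument rests on three facts about a constant linear map $A$. First, $A$ commutes with the flat connection $\overline{D}$: for any vector field $V$ along $x$ we have $\overline{D}_X (A V) = A\,\overline{D}_X V$, because in standard coordinates $\overline{D}_X$ is just componentwise differentiation by $X$, and $A$ has constant entries. Second, $\widetilde{x}_* = A\circ x_*$. Third, $\xi_{\widetilde{x}} = \widetilde{x} = A x = A\xi_x$.

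The first step is to check that $\xi_{\widetilde{x}}$ is transversal to $\widetilde{x}$. This holds since $A$ is invertible: if $A x(p) \in A x_*(T_pM)$, then $x(p) \in x_*(T_pM)$, which is excluded by assumption. In particular $\widetilde{x}$ is again an immersion.

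The second step applies $A$ to the first equation of \eqref{eq:affin-GW} for $x$. This gives
\begin{align*}
    \overline{D}_X(\widetilde{x}_* Y) = A\,\overline{D}_X(x_*Y) = \widetilde{x}_*(\nabla^{\xi_x}_X Y) + \mathfrak{g}^{\xi_x}(X,Y)\,\xi_{\widetilde{x}}.
\end{align*}
Since $\widetilde{x}_*(T_pM)\oplus \mathbb{R}\,\xi_{\widetilde{x}}(p) = \mathbb{R}^{n+1}$, the decomposition of a vector into its tangential and transversal components is unique. Comparing with the Gauss equation defining $\nabla^{\xi_{\widetilde{x}}}$ and $\mathfrak{g}^{\xi_{\widetilde{x}}}$, and using injectivity of $\widetilde{x}_*$, we obtain $\nabla^{\xi_{\widetilde{x}}} = \nabla^{\xi_x}$ and $\mathfrak{g}^{\xi_{\widetilde{x}}} = \mathfrak{g}^{\xi_x}$. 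Hence $g^{\xi_{\widetilde{x}}} = -\mathfrak{g}^{\xi_{\widetilde{x}}} = -\mathfrak{g}^{\xi_x} = g^{\xi_x}$.

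The same reasoning applied to the Weingarten equation shows that $S$ and $\tau$ are also preserved, although the statement does not need this. As a sanity check, for the position vector field one has $\overline{D}_X \xi_x = x_* X$, so $S = -\mathrm{Id}$ and $\tau = 0$ for both $x$ and $\widetilde{x}$. I do not expect a genuine obstacle. The only point requiring care is the commutation $\overline{D}A = A\overline{D}$, which holds precisely because the transformation is linear and has constant coefficients.
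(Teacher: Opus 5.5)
The paper states this proposition without proof, treating it as standard, and your argument is correct and is the natural verification. Because $A$ is constant and linear, it commutes with $\overline{D}$ and sends $x_*$ to $\widetilde{x}_*$ and $\xi_x$ to $\xi_{\widetilde{x}}$; applying it to the Gauss formula and using uniqueness of the splitting $\widetilde{x}_*(T_pM)\oplus\mathbb{R}\,\xi_{\widetilde{x}}(p)$ gives $\nabla^{\xi_{\widetilde{x}}}=\nabla^{\xi_x}$ and $\mathfrak{g}^{\xi_{\widetilde{x}}}=\mathfrak{g}^{\xi_x}$, hence $g^{\xi_{\widetilde{x}}}=g^{\xi_x}$. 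One wording point: that $\widetilde{x}$ is an immersion follows directly from $\widetilde{x}_*=A\circ x_*$ with $A$ invertible, not ``in particular'' from the transversality check.
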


Below,
we introduce terminologies following \cite{milman2023centro}.
Let $E$ be a $(n+1)$-dimensional vector space over $\mathbb{R}$.
In this paper,
we identify $E$ with $\mathbb{R}^{n+1}$.
Let $E^*$ denote its dual,
and we also often identify $E^*$ with $E$.
We denote unit spheres in $E$ and $E^*$ by $\mathbb{S}$ and $\mathbb{S}^*$,
respectively.
Let $\mathcal{K}$ be the set of convex bodies containing the origin in their interiors,
and let $\mathcal{K}_+^\infty$ be the set of convex bodies in $\mathcal{K}$ with $C^\infty$-smooth boundary and strictly positive curvature.
We note that,
for $K \in \mathcal{K}_+^\infty$,
$\partial K$ is strongly convex.
For $K \in \mathcal{K}$,
the {support function} is defined as follows:
\begin{align*}
    h_K(x^*) := \max_{x \in K} \langle x^*,x \rangle
\end{align*}
for $x^* \in E^*$,
where $\langle , \rangle$ denotes the natural pairing between $E$ and $E^*$.
For $e \in \mathbb{S}^*$,
we set $e^{\perp} := \{ x \in E \ |\ \langle e,x \rangle = 0\}$,
and $P_{e^{\perp}} : E \rightarrow E$ denotes the projection to the subspace $e^{\perp}$.
We note that we may regard $h_K$ as a function on $\mathbb{S}^*$ by restriction.
For $K \in \mathcal{K}_+^\infty$,
we denote the position vector by 
\begin{align*}
    x_{\partial K}^K(p) := p
\end{align*}
for $p\in \partial K$.
Also,
we denote the associated centro-affine connection and the bilinear form by $\nabla^{\partial K}_K$ and $g^{\partial K}_K$.
Let $K^* := \{ x^* \in E^*\ |\ h_K(x^*) \leq 1\}$.
For $M \in \mathcal{M}_K := \{ \mathbb{S}, \mathbb{S}^*, \partial K, \partial K^* \}$,
Milman \cite[Section 4]{milman2023centro} constructed diffeomorphisms $T_K^{M_i \rightarrow M_j} : M_i \rightarrow M_j$ for $M_i, M_j \in \mathcal{M}_K$ and set $x_K^M := T_{K}^{M\rightarrow \partial K}$.
We note that $T_{K}^{\partial K \rightarrow \mathbb{S}^*}$ coincides with the Gauss map.
We define 
\begin{align*}
    g_K^M := (x_K^M)^* g_K^{\partial K},\quad \nabla_K^M := (x_K^M)^* \nabla_K^{\partial K}.
\end{align*}
\begin{remark}
For $M_1, M_2 \in \mathcal{M}_K$,
we have 
\begin{align*}
    g_K^{M_1} = (T_K^{M_1 \rightarrow M_2})^* g_K^{M_2},\quad \nabla_K^{M_1} = (T_K^{M_1 \rightarrow M_2})^* \nabla_K^{M_2}.
\end{align*} 
\end{remark}
Convex bodies have constant positive centro-affine Ricci curvature as follows (see \cite[Subsection 3.10]{milman2023centro}):
\begin{proposition}[\cite{milman2023centro}]\label{prop:milman-constant}
Let $K \in \mathcal{K}_+^{\infty}$.
For $M \in \mathcal{M}_K$,
we have 
\begin{align}
    \Ric^{\nabla_K^M} = \Ric^{(\nabla_K^M)^*} = (n-1) g_K^M,
\end{align}
where $(\nabla_K^M)^*$ is the $g_K^M$-dual connection of $\nabla_K^M$.
\end{proposition}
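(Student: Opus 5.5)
The plan is to reduce the statement to $M = \partial K$ and then read off the curvature directly from the Gauss--Weingarten equations \eqref{eq:affin-GW} for the centro-affine normalization $\xi = x^K_{\partial K}$. Since $g_K^M$ and $\nabla_K^M$ are defined as pull-backs of $g_K^{\partial K}$ and $\nabla_K^{\partial K}$ by the diffeomorphism $x_K^M$, the Ricci tensors of $\nabla_K^M$ and of its $g_K^M$-dual are the pull-backs of the corresponding tensors on $\partial K$. Indeed, curvature, traces and the defining relation of the dual connection are all natural under diffeomorphisms. So it suffices to treat $M = \partial K$.

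First I would check that $\xi(p) = p$ is transversal on $\partial K$ and that $g^\xi$ is non-degenerate. Transversality holds because the origin lies in the interior of $K$, so $\langle \nu(p), p\rangle = h_K(\nu(p)) > 0$, where $\nu$ is the outer unit normal. Non-degeneracy comes from comparing with the Euclidean normalization: writing $\nu = a\,\xi + x_*(V)$ with $a = 1/h_K(\nu) > 0$, the second fundamental forms are related by $\mathfrak{g}^\xi = -h_K(\nu)^{-1}\,\mathrm{II}$ up to the sign conventions. Hence strict positive curvature of $\partial K$ gives that $g^\xi = -\mathfrak{g}^\xi$ is positive definite, so the dual connection is well defined.

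Next I would apply the Weingarten equation. Since $\xi = x$, we have $\overline{D}_X \xi = x_* X$, so $S^\xi = -\mathrm{Id}$ and $\tau^\xi = 0$. Differentiating the first equation of \eqref{eq:affin-GW} and using flatness of $\overline{D}$ yields the Gauss equation in the form
\begin{align*}
R^{\nabla^\xi}(X,Y)Z = \mathfrak{g}^\xi(Y,Z)S^\xi X - \mathfrak{g}^\xi(X,Z)S^\xi Y = g^\xi(Y,Z)X - g^\xi(X,Z)Y.
\end{align*}
The $\xi$-component of the same computation gives the Codazzi equation, which says that $\nabla^\xi g^\xi$ is totally symmetric; this uses $\tau^\xi = 0$. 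Taking the trace $X \mapsto R(X,Y)Z$, which is independent of the metric used to compute it, gives $\Ric^{\nabla^\xi}(Y,Z) = n\,g^\xi(Y,Z) - g^\xi(Y,Z) = (n-1)g^\xi(Y,Z)$.

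For the dual connection I would use the standard identity $g(R^*(X,Y)Z,W) = -g(Z,R(X,Y)W)$, which follows directly from the definition of the dual connection by differentiating twice. Substituting the Gauss formula gives
\begin{align*}
g^\xi\bigl(R^{*}(X,Y)Z,W\bigr) = g^\xi(X,W)g^\xi(Y,Z) - g^\xi(Y,W)g^\xi(X,Z).
\end{align*}
Thus $R^*(X,Y)Z = g^\xi(Y,Z)X - g^\xi(X,Z)Y$, and the same trace computation yields $(n-1)g^\xi$.

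I expect the only real obstacle to be bookkeeping of sign conventions. These include Loftin's sign for $\mathfrak{g}^\xi$ versus Milman's $g^\xi$, the sign of $S^\xi$, and the sign of the Gauss equation. Getting any of these wrong would turn $(n-1)g$ into $-(n-1)g$. I would settle them by testing on the unit sphere $\partial K = \mathbb{S}$, where $\nabla^\xi$ is the Levi-Civita connection of the round metric and $g^\xi$ equals the round metric, so the Ricci tensor must be $(n-1)g$.
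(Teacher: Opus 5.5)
Your argument is correct. The Gauss equation with $S^\xi=-\mathrm{Id}$ gives $R^{\nabla^\xi}(X,Y)Z=g^\xi(Y,Z)X-g^\xi(X,Z)Y$; the duality identity carries this over unchanged to $(\nabla^\xi)^*$; and naturality under the diffeomorphisms $x_K^M$ handles the other $M\in\mathcal{M}_K$. The paper gives no proof of its own and simply cites Milman, where the result is derived essentially this way (the centro-affine structure has constant curvature $1$). The only thing to note is that the Codazzi equation you mention is not needed for the trace computation.
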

Below,
we denote the Hessian of $h_K$ with respect to $\overline{D}$ by $\overline{D}^2 h_K$,
and the restriction of $\overline{D}^2 h_K$ to $T \mathbb{S}^*$ by $D^2 h_K$.
The metric and connection are explicitly written down as follows (see \cite[Proposition 4.2]{milman2023centro}):
\begin{proposition}[\cite{milman2023centro}]\label{prop:milman-prop4-2}
Let $K\in \mathcal{K}_+^\infty$.
We have 
\begin{align*}
    g_K^{\mathbb{S}^*} &= \frac{D^2 h_K}{h_K} = \frac{\Hess_{g_{\mathbb{S}^*}}h_K}{h_K} + g_{\mathbb{S}^*},\\
    (\nabla_K^{\mathbb{S}^*})^*_X Y &= \nabla^{g_{\mathbb{S}^*}}_X Y - (X \log h_K) Y - (Y \log h_K) X,
\end{align*}
where $g_{\mathbb{S}^*}$ is the \textcolor{black}{standard Riemannian metric} on $\mathbb{S}^*$,
and $(\nabla_K^{\mathbb{S}^*})^*$ is the $g_K^{\mathbb{S}^*}$-dual connection of $\nabla_K^{\mathbb{S}^*}$.
\end{proposition}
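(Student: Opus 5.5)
The statement is Proposition~\ref{prop:milman-prop4-2}, quoted from \cite{milman2023centro}. I would prove it by direct computation, parametrizing $\partial K$ by $\mathbb{S}^*$ through the inverse Gauss map. I take $x := x_K^{\mathbb{S}^*} = T_K^{\mathbb{S}^*\to\partial K}$ to be $\theta\mapsto \overline{D}h_K(\theta)$, where $h_K$ is viewed as a $1$-homogeneous function on $E^*$. This is the standard description of the inverse of the Gauss map of a body in $\mathcal{K}_+^\infty$, and I would check that it agrees with Milman's construction.

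\emph{Formula for $g_K^{\mathbb{S}^*}$.}
By $1$-homogeneity, Euler's relation gives $\langle \theta, x(\theta)\rangle = h_K(\theta)$ and $\overline{D}^2h_K(\theta,\cdot)=0$. For $Y\in T\mathbb{S}^*$ we have $x_*Y = \overline{D}^2h_K(Y,\cdot)$, hence $\langle\theta, x_*Y\rangle = \overline{D}^2h_K(Y,\theta)=0$. Differentiating this identity along $X\in T\mathbb{S}^*$ gives
\begin{align*}
0 = \langle X, x_*Y\rangle + \langle \theta, \overline{D}_X(x_*Y)\rangle,
\end{align*}
so $\langle \theta,\overline{D}_X(x_*Y)\rangle = -D^2h_K(X,Y)$. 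Now I pair the Gauss equation \eqref{eq:affin-GW} (with $\xi=x$) against $\theta$. The tangential term drops out and $\langle\theta,x\rangle = h_K$, so $\mathfrak{g}^{\xi_x}(X,Y)\,h_K = -D^2h_K(X,Y)$. Therefore
\begin{align*}
g_K^{\mathbb{S}^*} = -\mathfrak{g}^{\xi_x} = \frac{D^2h_K}{h_K}.
\end{align*}
For the second expression, I compare the ambient Hessian restricted to the sphere with the intrinsic Hessian. The difference is the second fundamental form of $\mathbb{S}^*$ times the radial derivative $\partial_r h_K = h_K$, which gives $D^2h_K = \Hess_{g_{\mathbb{S}^*}}h_K + h_K\, g_{\mathbb{S}^*}$.

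\emph{Dual connection.}
I would use the conormal map $y:\mathbb{S}^*\to E^*$, $y(\theta) := \theta/h_K(\theta)$, which parametrizes $\partial K^*$. The plan has two steps.

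\textbf{Step 1 (duality).} The pairings $\langle y,x\rangle = 1$, $\langle y,x_*Y\rangle = 0$ and $\langle y_*Y, x\rangle = 0$ all hold. Write the centro-affine decomposition of $y$ as
\begin{align*}
\overline{D}_X(y_*Y) = y_*(\widetilde\nabla_XY) + \widetilde{\mathfrak g}(X,Y)\,y.
\end{align*}
Differentiating $\langle y_*Z, x_*Y\rangle$ along $X$ and inserting both Gauss equations should give
\begin{align*}
X\, g(Y,Z) = g(\nabla_XY,Z) + g(Y,\widetilde\nabla_XZ),
\end{align*}
with $g = g_K^{\mathbb{S}^*}$. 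This is the Nomizu--Sasaki conormal argument, and it identifies $\widetilde\nabla = (\nabla_K^{\mathbb{S}^*})^*$.

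\textbf{Step 2 (computing $\widetilde\nabla$).} From $y_*Y = Y/h_K - (Yh_K)\,\theta/h_K^2$, differentiate along $X$. Then decompose the result into the $\theta$-direction, i.e.\ the $y$-direction, and the part tangent to $\mathbb{S}^*$. The Euclidean derivative $\overline{D}_XY$ splits as $\nabla^{g_{\mathbb{S}^*}}_XY - g_{\mathbb{S}^*}(X,Y)\,\theta$. Collecting terms, the tangential part of $\overline{D}_X(y_*Y)$ should be $y_*\bigl(\nabla^{g_{\mathbb{S}^*}}_XY - (X\log h_K)Y - (Y\log h_K)X\bigr)$ up to multiples of $y$. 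This would yield the stated formula.

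I expect the main obstacle to be Step~1: keeping the index placements and signs straight in the duality identity, particularly with the sign convention $g^\xi = -\mathfrak{g}^\xi$. I also need to confirm that Milman's $T_K^{\mathbb{S}^*\to\partial K}$ and $T_K^{\mathbb{S}^*\to\partial K^*}$ are exactly the maps $x$ and $y$ used above.
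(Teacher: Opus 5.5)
Your argument is correct. There is no internal proof to compare it with: the paper imports this statement from \cite[Proposition 4.2]{milman2023centro} without proof, so your computation is a self-contained derivation of it.

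The metric part works as you describe. Pairing the Gauss equation for $x=\overline{D}h_K|_{\mathbb{S}^*}$ with $\theta$ removes the tangential term and gives $\mathfrak{g}^{\xi_x}h_K=-D^2h_K$, with the sign matching $g^\xi=-\mathfrak{g}^\xi$. The second expression then follows from $\overline{D}_XY=\nabla^{g_{\mathbb{S}^*}}_XY-g_{\mathbb{S}^*}(X,Y)\theta$ together with Euler's relation for the radial derivative of $h_K$.

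In Step 1, one identity is left implicit: $\langle y_*Z,x_*Y\rangle=g_K^{\mathbb{S}^*}(Y,Z)$. You get it by differentiating $\langle y,x_*Y\rangle=0$ along $Z$. Alternatively, use $y_*Z=Z/h_K-(Zh_K)\theta/h_K^2$ together with $\overline{D}^2h_K(\theta,\cdot)=0$. With this identity in place, the cross terms $\widetilde{\mathfrak g}(X,Z)\langle y,x_*Y\rangle$ and $\mathfrak{g}^{\xi_x}(X,Y)\langle y_*Z,x\rangle$ vanish by your three pairings. The signs then come out right with no adjustment.

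You also do not need to check that $y$ is Milman's $T_K^{\mathbb{S}^*\to\partial K^*}$. Step 1, together with uniqueness of the $g$-dual connection for the nondegenerate metric $g_K^{\mathbb{S}^*}$, already forces $\widetilde\nabla=(\nabla_K^{\mathbb{S}^*})^*$. Only $x=x_K^{\mathbb{S}^*}=\mathfrak{n}^{-1}$ matters, and that follows from the paper's remark that $T_K^{\partial K\to\mathbb{S}^*}$ is the Gauss map.

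In Step 2, note that $y_*W\equiv W/h_K$ modulo $\theta$. So $\widetilde\nabla_XY$ is $h_K$ times the $T\mathbb{S}^*$-component of $\overline{D}_X(y_*Y)$, which gives exactly $\nabla^{g_{\mathbb{S}^*}}_XY-(X\log h_K)Y-(Y\log h_K)X$.

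Your route also explains the shape of the formula. It identifies $(\nabla_K^{\mathbb{S}^*})^*$ with the centro-affine connection of $\partial K^*$ in its radial parametrization $\theta\mapsto\theta/h_K(\theta)$. Step 2 is then an instance of a general rule: rescaling a centro-affine immersion $z$ to $\phi z$ changes its connection by $(X\log\phi)Y+(Y\log\phi)X$. Here it is applied to the unit sphere, whose centro-affine connection is $\nabla^{g_{\mathbb{S}^*}}$, with $\phi=1/h_K$. Lemma \ref{lem:radial-graph} uses the same mechanism for $F_h=(1,x)/h$. This is why the Wylie--Yeroshkin form appears both in Proposition \ref{prop:radial-graph} and in Proposition \ref{prop:conn-identity}.
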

\section{Radial graphs}\label{sec:radial-graph}
In this section,
we equip the radial graph $F_h(\Omega)$ with a weighted structure and discuss rigidity properties of Bishop--Gromov type volume comparison theorems.
\subsection{Weighted structure on radial graphs}
In Subsection \ref{subsec:affine-differential-geometry},
we calculated the Blaschke connection on the radial graphs.
Below,
we calculate the centro-affine connection.
The centro-affine connection on the radial graph is related to the Wylie--Yeroshkin type affine connection as follows:
\begin{proposition}\label{prop:radial-graph}
Let $\Omega$ be a domain in $\mathbb{R}^n$ and $0< h \in C^\infty(\Omega)$.
We set 
\begin{align*}
    f_h := (n-1)\log h,\quad \overline{f}_h := f_h \circ F_h^{-1}, \quad \overline{g}_h := (F_h)_* \delta.
\end{align*}
Also,
we denote the position vector field on $F_h(\Omega)$ by $\xi_h$.
Then $\xi_h$ is a transversal vector field and
\begin{align}\label{eq:thm-radial-graph}
    \nabla^{\xi_h} = \nabla^{\overline{g}_h, \frac{\overline{f}_h}{n-1}},\quad g^{\xi_h} = \frac{1}{n-1} \Ric_{\overline{g}_h, \overline{f}_h}^1.
\end{align}
\end{proposition}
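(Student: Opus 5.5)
The plan is to compute the Gauss--Weingarten decomposition \eqref{eq:affin-GW} for $F_h$ with the transversal field $\xi_h = F_h$ directly in the coordinates $x\in\Omega$. Then I read off $\nabla^{\xi_h}$ and $g^{\xi_h}$ and compare them with the Wylie--Yeroshkin connection and the formula for $\Ric^1$. Since everything is pulled back by the diffeomorphism $F_h:\Omega\to F_h(\Omega)$, it suffices to prove the pulled-back identities $(F_h)^*\nabla^{\xi_h} = \nabla^{\delta,\log h}$ and $(F_h)^* g^{\xi_h} = \frac{1}{n-1}\Ric^1_{\delta, f_h}$ on $\Omega$. The reason is that $f_h/(n-1) = \log h$ and $\overline{g}_h = (F_h)_*\delta$.

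\textbf{Transversality.} This follows from \eqref{eq:rem:affine-normal-radial-graph-1}: $\det((F_h)_*\partial_1,\dots,(F_h)_*\partial_n,F_h)^2 = h^{-2(n+1)} \neq 0$. I would verify this by a short determinant computation. Write $(F_h)_*\partial_j = h^{-1}(0,e_j) - h^{-2}\partial_j h\,(1,x)$, and subtract multiples of the last column $h^{-1}(1,x)$ to reduce to $\det(h^{-1}e_j\text{-columns}, h^{-1}(1,x))$.

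\textbf{Gauss--Weingarten formulas.} Next I differentiate once more, as in \eqref{eq:rem-GW-1} and Lemma \ref{lem:radial-graph}:
\[
\overline{D}_{\partial_i}((F_h)_*\partial_j) = (F_h)_*\Big(-\tfrac{\partial_i h}{h}\partial_j - \tfrac{\partial_j h}{h}\partial_i\Big) - \tfrac{\partial_i\partial_j h}{h}\,F_h .
\]
This is \eqref{eq:rem-GW-1} with $\xi_{\eps,h} = -\eps F_h$ rewritten in terms of $F_h$. Also $\overline{D}_{\partial_i}F_h = (F_h)_*\partial_i$, so $\tau^{\xi_h}=0$ and $S^{\xi_h} = -\mathrm{Id}$. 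Reading off the decomposition gives two things. First, $(F_h)^*\nabla^{\xi_h} = \nabla^\delta - d\log h\otimes\mathrm{Id} - \mathrm{Id}\otimes d\log h = \nabla^{\delta,\log h}$; pushing forward by $F_h$ yields $\nabla^{\xi_h} = \nabla^{\overline{g}_h,\overline{f}_h/(n-1)}$. Second, $\mathfrak{g}^{\xi_h} = -\Hess_\delta h/h$, so that $g^{\xi_h} = \Hess_\delta h / h$.

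\textbf{Curvature identity.} By \eqref{eq:WY-affine-ricci} with $\varphi = f_h/(n-1) = \log h$ and $g=\delta$ flat, I need
\[
\Ric^1_{\delta,f_h} = \Hess_\delta f_h + \frac{\d f_h\otimes \d f_h}{n-1} = (n-1)\big(\Hess_\delta \log h + \d\log h\otimes \d\log h\big).
\]
The elementary identity $\Hess\log h + \d\log h\otimes\d\log h = \Hess h/h$ then gives $\Ric^1_{\delta,f_h} = (n-1)\Hess_\delta h/h = (n-1)(F_h)^*g^{\xi_h}$. Alternatively, one can compute $\Ric^{\nabla^{\delta,\log h}}$ directly from the connection, which serves as a check of the sign conventions. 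The only step needing care is this bookkeeping of signs between $\mathfrak{g}^\xi$ and $g^\xi$ and between $\xi_{\eps,h}$ and $\xi_h$. It is routine, so I do not expect a genuine obstacle.
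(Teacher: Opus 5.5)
Your proposal is correct and is essentially the paper's proof. The computation of Lemma~\ref{lem:radial-graph} gives $(F_h)^*\nabla^{\xi_h}=\nabla^{\delta,\log h}$ and $(F_h)^*g^{\xi_h}=\Hess_\delta h/h$, and then $\Ric^1_{\delta,f_h}=\Hess_\delta f_h+\frac{\d f_h\otimes\d f_h}{n-1}=(n-1)\Hess_\delta h/h$; this last step is just the definition of $\Ric^1$ with $\Ric_\delta=0$, so \eqref{eq:WY-affine-ricci} is not actually needed.

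The only difference is in the transversality step. You get it (and the immersion property at the same time) from the nonvanishing determinant $\pm h^{-(n+1)}$. The paper instead argues by contradiction, comparing coordinates in $(F_h)_*X=F_h(x)$ to force $X=0$ and hence $h(x)=0$.
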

\begin{remark}
In Proposition \ref{prop:radial-graph},
$F_h$ is an embedding.
Indeed,
since 
\begin{align}\label{eq:rem:radial-graph}
    (F_h)_* X = \frac{1}{h}(0,X) - \frac{\d h(X)}{h^2} (1,x),
\end{align}
we see that $F_h$ is an immersion.
Also,
for $(y_0,y)\in F_h(\Omega)$ with $y_0\in \mathbb{R}$ and $y \in \mathbb{R}^n$,
the map $(y_0,y) \mapsto y_0^{-1} y$ gives the inverse function of $F_h$.
\end{remark}
We first prove the following lemma:
\begin{lemma}\label{lem:radial-graph}
We assume the same assumptions as in Proposition \ref{prop:radial-graph}.
We have
\begin{align*}
    \overline{D}_X ((F_h)_* Y) = (F_h)_* \left( \nabla^{\delta}_X Y - \frac{\d f_h (X)}{n-1} Y - \frac{\d f_h(Y)}{n-1}X  \right) - \frac{\Hess_{\delta} h(X,Y)}{h}F_h.
\end{align*}
\end{lemma}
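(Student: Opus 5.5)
Since both sides of the identity are tensorial in $X$ apart from the $\nabla^\delta_X Y$ term, and both sides are $C^\infty(\Omega)$-linear in $X$ and satisfy the same Leibniz rule in $Y$, the plan is to reduce to coordinate vector fields $X = \partial_i$, $Y = \partial_j$, for which $\nabla^\delta_{\partial_i}\partial_j = 0$. The Leibniz check is immediate: $\overline{D}_X((F_h)_*(\phi Y)) = (X\phi)(F_h)_*Y + \phi\,\overline{D}_X((F_h)_*Y)$, and the right-hand side produces the same $(X\phi)(F_h)_* Y$ through $\nabla^\delta_X(\phi Y)$. It therefore suffices to compute $\partial_i\partial_j F_h$ as a vector in $\mathbb{R}^{n+1}$, because $\overline{D}_{\partial_i}((F_h)_*\partial_j) = \partial_i\partial_j F_h$ for the flat connection.

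For the direct computation, write $F_h = h^{-1}(1,x)$, so that $\partial_j F_h = h^{-1}(0,e_j) - h^{-2}(\partial_j h)(1,x)$, in agreement with \eqref{eq:rem:radial-graph}. Differentiating once more gives
\begin{align*}
\partial_i\partial_j F_h = -\frac{\partial_i h}{h^2}(0,e_j) - \frac{\partial_j h}{h^2}(0,e_i) - \frac{\partial_i\partial_j h}{h^2}(1,x) + \frac{2\partial_i h\,\partial_j h}{h^3}(1,x).
\end{align*}
Next I would substitute $(0,e_k) = h\,(F_h)_*\partial_k + h^{-1}(\partial_k h)(1,x)$, obtained by inverting the formula for $\partial_k F_h$, for $k=i,j$. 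The two resulting terms $-h^{-3}\partial_i h\,\partial_j h\,(1,x)$ cancel the $+2h^{-3}\partial_ih\,\partial_jh\,(1,x)$ term exactly. What remains is
\begin{align*}
\partial_i\partial_j F_h = -\frac{\partial_i h}{h}(F_h)_*\partial_j - \frac{\partial_j h}{h}(F_h)_*\partial_i - \frac{\partial_i\partial_j h}{h}F_h,
\end{align*}
using $h^{-2}(1,x) = h^{-1}F_h$.

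Finally, since $\d f_h/(n-1) = \d\log h = \d h/h$, the tangential coefficients match $-\frac{\d f_h(X)}{n-1}Y - \frac{\d f_h(Y)}{n-1}X$, and the normal term is $-\Hess_\delta h(\partial_i,\partial_j)\,h^{-1}F_h$. The only place that requires care is the bookkeeping of the $(1,x)$-components: one has to confirm that they cancel so that only the $\Hess_\delta h$ term survives along $F_h$. Everything else is routine.
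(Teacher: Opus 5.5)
Your proposal is correct and is essentially the paper's argument. Both compute $\overline{D}_{\partial_i}((F_h)_*\partial_j)$ in coordinates and eliminate the ambient vectors $(0,e_k)$ via $(0,e_k) = h\,(F_h)_*\partial_k + (\partial_k h)F_h$. The differences are cosmetic: the paper differentiates $(F_h)_*\partial_j = h^{-1}(0,e_j) - h^{-1}(\partial_j h)F_h$ directly, so the $(1,x)$-cancellation you track is absorbed by $\overline{D}_{\partial_i}F_h = (F_h)_*\partial_i$, and your Leibniz/tensoriality reduction to coordinate fields spells out a step the paper leaves implicit.
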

\begin{proof}
For $x = (x_1, \cdots, x_n) \in \Omega$,
we set $\partial_i := \frac{\partial}{\partial x_i}$.
For the immersion $\iota : \Omega \rightarrow \mathbb{R}^{n+1}$ with $\iota(x) := (1,x)$,
let $E_i := \iota_* (\partial_i)$.
For brevity of notation,
we denote $F_h := F$,
$F_i := (F_h)_* (\partial_i)$ and $h_i := \partial_i h$.
Using this,
we have
\begin{align*}
    \overline{D}_{\partial_i} F_j &= \partial_i \left( \frac{1}{h} \right)E_j - \partial_i \left( \frac{h_j}{h} \right)F_h - \frac{h_j}{h}\overline{D}_{\partial_i} F \\
    &= - \frac{h_i}{h^2} E_j - \left( \frac{h_{ij}}{h} - \frac{h_i h_j}{h^2}\right)F - \frac{h_j}{h}F_i\\
    &= - \frac{h_i}{h^2} \left( h F_j + h_j F \right) - \left( \frac{h_{ij}}{h} - \frac{h_i h_j}{h^2}\right)F - \frac{h_j}{h}F_i\\
    &= - \frac{h_i}{h} F_j - \frac{h_j}{h}F_i - \frac{h_{ij}}{h}F, 
\end{align*}
where we used
\begin{align}
    F_j = \frac{1}{h}E_j - \frac{h_j}{h}F
\end{align}
in the third equality.
Here,
we note that the symbol $\overline{D}_{\partial_i} F_j$ denotes the pullback connection $(F^* \overline{D})_{\partial_i}F_j$.
This yields the desired assertion.
\end{proof}
We are now in a position to prove Proposition \ref{prop:radial-graph}.
\begin{proof}[Proof of Proposition \ref{prop:radial-graph}]
We first observe that $\xi_h$ is transversal.
Indeed,
if not,
there exist $x \in \Omega$ and $X \in T \Omega$ such that $(F_h)_* X = F_h(x)$.
Together with \eqref{eq:rem:radial-graph},
we see that comparison of the first coordinate gives $\d h(X) = -h$,
and comparison of the remaining coordinates gives $X = 0$.
This implies $h(x) = 0$,
which is a contradiction.

It follows from Lemma \ref{lem:radial-graph} that 
\begin{align}
    \nabla^{\xi_h}_{(F_h)_* X} ((F_h)_* Y) &= (F_h)_* \left( \nabla^\delta_X Y - \frac{\d f_h(X)}{n-1}Y - \frac{\d f_h(Y)}{n-1}X \right),\label{eq:thm-radial-graph-1}\\
    (F_h^* g^{\xi_h})(X,Y) &= \frac{\Hess_\delta h(X,Y)}{h}.\label{eq:thm-radial-graph-2}
\end{align}
Since $F_h$ is an isometry from $(\Omega,\delta)$ to $(F_h(\Omega), \overline{g}_h)$,
\eqref{eq:thm-radial-graph-1} implies the \textcolor{black}{first identity} in \eqref{eq:thm-radial-graph}.
For the second identity,
we have 
\begin{align}\label{eq:Ricci-Hess-relation}
    (F_h)^* (\Ric_{\overline{g}_h, \overline{f}_h}^1) = \Ric_{\delta,f_h}^1 = \Hess_\delta f_h + \frac{\d f_h \otimes \d f_h}{n-1} = (n-1) \frac{\Hess_\delta h}{h} =(n-1) F_h^* g^{\xi_h},
\end{align}
where we used \eqref{eq:thm-radial-graph-2} in the last equality.
We conclude the proof.
\end{proof}
\subsection{Rigidity of volume comparison theorem on radial graphs}
We first formulate the volume comparison theorem on non-complete Riemannian manifolds.
This is needed since the domain $\Omega$ appearing in the radial graph setting is not always complete.
We keep the notation in Subsection \ref{subsec:comparison} with the following modification for the non-complete case.
Let $(M,g)$ be an $n$-dimensional Riemannian manifold.
For $p\in M$ and $\theta \in U_p M$,
let $\gamma_{p,\theta} : [0,\rho_p (\theta)) \rightarrow M$ be the maximal unit-speed geodesic with $\gamma_{p,\theta}(0) = p$ and $\gamma'_{p,\theta}(0) = \theta$,
where  
\begin{align*}
    \rho_p (\theta) := \sup \{ t > 0\ |\  \gamma_{p,\theta} \text{ is defined on } [0,t)\}.
\end{align*}
We note that,
for a complete Riemannian manifold,
we have $\rho_p (\theta) = +\infty$.
We define 
\begin{align*}
    \tau_p(\theta) := \sup\{ t \in [0,\rho_p(\theta))\ |\ d_g(p,\gamma_{p,\theta}(t)) = t \}.
\end{align*}
For $t \in [0,\tau_p(\theta))$,
we define $s_{f,p,\theta}(t)$ in the same manner as in \eqref{eq:def-s-tau}.
Also,
we set 
\begin{align*}
    \tau_{f,p}(\theta) := \lim_{t \nearrow \tau_p(\theta)} s_{f,p,\theta}(t).
\end{align*}
Also,
in the same manner as in \eqref{eq:def-jacobian}, 
\eqref{eq:def-jacobian-2} and \eqref{eq:def-truncated-jacobian},
we define  $J_{f,p}, \widehat{J}_{f,p}$ and $\overline{J}_{f,p}$.
Since $M$ is not assumed to be complete,
a minimizing geodesic from $p$ to an arbitrary point may not exist.
Hence,
the definition of the re-parametrized distance in \eqref{eq:def-reparametrized-distance} does not work.
For $r \geq 0$,
instead of $B_{g,f,r}(p)$,
we define
\begin{align*}
    C_{g,f,r}(p) := \{ \gamma_{p,\theta}(t)\ |\ \theta \in U_p M,\ 0 \leq t < \tau_p (\theta),\ s_{f,p,\theta}(t) < r\}.
\end{align*}
\begin{remark}
For a complete Riemannian manifold,
we have  $\rho_p (\theta) = +\infty$ and the notation $\tau_p, \tau_{f,p}, s_{f,p,\theta}$ defined above coincide with those defined in Subsection \ref{subsec:comparison}.
Furthermore,
for $r \geq 0$,
we have 
\begin{align*}
    B_{g,f,r}(p) \backslash C_{g,f,r}(p) \subset \mathrm{Cut}(p),
\end{align*}
where $\mathrm{Cut}(p)$ denotes the cut locus of $p$.
\end{remark}
We have the following volume comparison:
\begin{corollary}\label{cor:C-BG}
Let $(M,g,\mathfrak{m}_{f,g})$ be an $n$-dimensional Riemannian manifold.
For $\kappa \in\mathbb{R}$,
we assume 
\begin{align*}
    \Ric_{g,f}^1  \geq (n-1) \kappa \,\e^{-\frac{4f}{n-1}}g.
\end{align*}
For $p\in M$ and $0 < r < R$,
we have 
\begin{align*}
    \frac{\mathfrak{m}_{\frac{n+1}{n-1}f, g} \left( C_{g,f,R}(p) \right)}{\mathfrak{m}_{\frac{n+1}{n-1}f, g} \left( C_{g,f,r}(p) \right)} \leq \frac{v(n,\kappa,R)}{v(n,\kappa,r)}.
\end{align*}
\end{corollary}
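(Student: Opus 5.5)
The plan is to rerun the proof of Theorem \ref{thm:BG} in polar coordinates about $p$. Completeness enters that proof only through the existence of minimizing geodesics and the description of the segment domain. Both are built into the definition of $C_{g,f,r}(p)$.

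First, I describe the region. Let
\[
\mathcal{D}_p := \{ t\theta \ |\ \theta \in U_pM,\ 0 \le t < \tau_p(\theta)\} \subset T_pM .
\]
The exponential map is defined on this set, since $\tau_p(\theta) \le \rho_p(\theta)$. On its interior it is a diffeomorphism onto its image, because a segment of $\gamma_{p,\theta}$ that is still minimizing cannot contain conjugate points, and two minimizing segments cannot meet in their interiors. Consequently
\[
\mathfrak{m}_{\frac{n+1}{n-1}f,g}\bigl(C_{g,f,r}(p)\bigr) = \int_{U_pM} \int_0^{\tau_p(\theta)} \mathbf{1}_{\{s_{f,p,\theta}(t) < r\}}\, \e^{-\frac{2f(\gamma_{p,\theta}(t))}{n-1}}\, J_{f,p}(t,\theta) \, \d t \, \d\theta .
\]
Here I use $\e^{-\frac{n+1}{n-1}f} J_p = \e^{-\frac{2f}{n-1}} J_{f,p}$. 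The boundary of $\mathcal{D}_p$ is a null set, and I will need to check this carefully in the incomplete setting. Next I change variables $s = s_{f,p,\theta}(t)$, so that $\d s = \e^{-\frac{2f}{n-1}}\d t$. This gives
\[
\mathfrak{m}_{\frac{n+1}{n-1}f,g}\bigl(C_{g,f,r}(p)\bigr) = \int_{U_pM}\int_0^r \overline{J}_{f,p}(s,\theta)\,\d s\,\d\theta .
\]

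Second, I apply the pointwise Jacobian comparison of Wylie--Yeroshkin. Under $\Ric^1_{g,f} \ge (n-1)\kappa\,\e^{-\frac{4f}{n-1}}g$, the map
\[
s \mapsto \frac{\widehat{J}_{f,p}(s,\theta)}{\mathrm{sn}_\kappa(s)^{n-1}}
\]
is non-increasing on $(0,\tau_{f,p}(\theta))$, and in particular $\tau_{f,p}(\theta) \le C_\kappa$. This is a local ODE (Riccati) argument along the single geodesic $\gamma_{p,\theta}$ on $[0,\tau_p(\theta))$, where $\gamma_{p,\theta}$ is defined and free of conjugate points. It therefore uses no completeness and carries over verbatim from \cite{wylie2016geometry,kuwae2021rigidity}. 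Since $\overline{J}_{f,p}$ vanishes beyond $\tau_{f,p}(\theta)$, the truncated ratio $\overline{J}_{f,p}(s,\theta)/\overline{\mathrm{sn}}_\kappa(s)^{n-1}$ is also non-increasing in $s$, with the convention that it is $0$ once $\overline{J}_{f,p}$ vanishes.

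Third, I use the standard Gromov lemma. If $a/b$ is non-increasing and $b > 0$ near $0$, then
\[
r \mapsto \frac{\int_0^r a}{\int_0^r b}
\]
is non-increasing. Applying it for each $\theta$ and then integrating over $\theta \in U_pM$ gives the ratio monotonicity of
\[
\frac{\int_{U_pM}\int_0^r \overline{J}_{f,p}}{\omega_{n-1}\int_0^r \overline{\mathrm{sn}}_\kappa^{\,n-1}},
\]
which is exactly the claimed inequality.

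I expect the main obstacle to be the first step: justifying that $\exp_p$ restricted to $\mathcal{D}_p$ parametrizes $C_{g,f,r}(p)$ bijectively up to a null set when $M$ is incomplete. My first attempt would be the usual argument. If $\gamma_{p,\theta}(t) = \gamma_{p,\theta'}(t')$ with both $t < \tau_p(\theta)$ and $t' < \tau_p(\theta')$, then $t = t' = d_g(p,\cdot)$. Extending one geodesic slightly past that point and cutting a corner would then contradict minimality. The boundary $\{t = \tau_p(\theta)\}$ is the graph of a function over $U_pM$ and hence has measure zero, so it can be handled the same way.
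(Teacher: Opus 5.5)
Your proposal is correct and is essentially the paper's argument. The paper likewise writes $\mathfrak{m}_{\frac{n+1}{n-1}f,g}(C_{g,f,r}(p))=\int_{U_pM}\int_0^r\overline{J}_{f,p}(s,\theta)\,\d s\,\d\theta$, notes that the Riccati inequality still holds along minimizing geodesics without completeness, and then reruns the proof of Theorem \ref{thm:BG} (Jacobian monotonicity plus Gromov's lemma); your injectivity and no-conjugate-point discussion supplies detail the paper leaves implicit, though you need not pass to the interior of $\mathcal{D}_p$ or treat the graph $\{t=\tau_p(\theta)\}$ (which is not even in $\mathcal{D}_p$), since injectivity and nondegeneracy of $\exp_p$ on all of $\mathcal{D}_p$, which your corner argument gives, already yield the polar-coordinate formula via the change-of-variables (area) formula.
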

\begin{proof}
The proof of Theorem \ref{thm:BG} relies on the Riccati inequality (see Wylie--Yeroshkin \cite[Lemma 4.1]{wylie2016geometry}, 
Lu--Minguzzi--Ohta \cite[Proposition 3.5]{lu2022comparison},
Kuwae--Sakurai \cite[Lemma 2.1]{kuwae2021rigidity}).
In our non-complete setting,
the same Riccati inequality holds along minimizing geodesics.
Also,
we have 
\begin{align*}
     \textcolor{black}{\mathfrak{m}_{\frac{n+1}{n-1}f, g}\left( C_{g,f,r}(p) \right)  = \int_{U_pM} \int_0^r \overline{J}_{f,p}(s,\theta) \,\d s \,\d \theta}.
\end{align*}
Hence,
applying the same argument in proving Theorem \ref{thm:BG} leads us to the desired conclusion. 
\end{proof}
We now move on to the radial graph setting on a convex domain $\Omega$ in $\mathbb{R}^n$ with $0\in \Omega$.
We note that we have $\rho_0(\theta) = \tau_0(\theta)$ in this setting.
We set 
\begin{align*}
    \tau_\kappa := \begin{cases}
        \frac{\pi}{2\sqrt{\kappa}} &\text{ for }\kappa > 0,\\
        +\infty &\text{ for } \kappa \leq 0,
    \end{cases}
    \qquad 
    \Omega(\kappa) := \begin{cases}
        \mathbb{R}^n &\text{ for }\kappa \geq 0,\\
        B_{\delta,\frac{1}{\sqrt{-\kappa}}}(0)& \text{ for }\kappa < 0.
    \end{cases}
\end{align*}
We note that $B_{\delta,\frac{1}{\sqrt{-\kappa}}}(0) = \{ x \in \mathbb{R}^n \ |\ |x| < \frac{1}{\sqrt{-\kappa}} \}$.
We have the following volume rigidity:
\begin{theorem}\label{thm:rigidity-volume-comparison}
Let $\Omega$ be a convex domain in $\mathbb{R}^n$ with $0\in \Omega$,
and $0 < h \in C^\infty(\Omega)$.
We set 
\begin{align*}
    f_h := (n-1)\log h,\quad \overline{f}_h := f_h \circ F_h^{-1},\quad \overline{g}_h := (F_h)_* \delta.
\end{align*}
For $\kappa \in \mathbb{R}$,
we assume 
\begin{align}\label{eq:thm:rigidity-volume-comparison-0}
    \Ric_{\overline{g}_h, \overline{f}_h}^1 \geq (n-1) \kappa \,\e^{-\frac{4\overline{f}_h}{n-1}}\overline{g}_h.
\end{align}
For every $0 < r < R < \tau_\kappa$,
we assume 
\begin{align}\label{eq:thm:rigidity-volume-comparison-1}
    \frac{\mathfrak{m}_{\frac{n+1}{n-1}\overline{f}_h, \overline{g}_h} \left( C_{\overline{g}_h, \overline{f}_h, R}\left( F_h(0) \right) \right)}{\mathfrak{m}_{\frac{n+1}{n-1}\overline{f}_h, \overline{g}_h} \left( C_{\overline{g}_h, \overline{f}_h, r}\left( F_h(0) \right) \right)} = \frac{v(n,\kappa,R)}{v(n,\kappa,r)}.
\end{align}
Then $F_h(\Omega)$ is a \textcolor{black}{rotationally symmetric} hyperquadric.
In particular,
we have 
\begin{align*}
    h(x) = h(0)\sqrt{1 +  \widetilde{\kappa} |x|^2},
    \qquad
    \Omega = \Omega(\widetilde{\kappa}),
\end{align*}
where we set $\widetilde{\kappa} := h(0)^{-4}\kappa$.
\end{theorem}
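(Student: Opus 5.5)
Because $F_h$ is an isometry from $(\Omega,\delta)$ onto $(F_h(\Omega),\overline{g}_h)$, I would pull everything back to $\Omega$ and work with $(\Omega,\delta,f_h)$, $f_h=(n-1)\log h$. The geodesics from $0$ are then the rays $t\theta$ with $\theta\in\mathbb{S}^{n-1}$. Convexity of $\Omega$ gives $\tau_0(\theta)=\rho_0(\theta)$, namely the exit time of the ray. The weight $\e^{-2f_h/(n-1)}$ equals $h^{-2}$, so
\begin{align*}
s_\theta(t)=\int_0^t h(\xi\theta)^{-2}\,\d\xi .
\end{align*}
The weighted Jacobian is $J_{f,0}(t,\theta)=\e^{-\frac{n+1}{n-1}f_h}t^{n-1}=h^{-(n+1)}t^{n-1}$, taking the measure $\mathfrak{m}_{\frac{n+1}{n-1}f}$ as in Corollary \ref{cor:C-BG}. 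Using \eqref{eq:Ricci-Hess-relation}, the curvature hypothesis \eqref{eq:thm:rigidity-volume-comparison-0} becomes
\begin{align*}
\frac{\Hess_\delta h}{h}\ \ge\ \kappa\, h^{-4}\,\delta .
\end{align*}

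The plan is to run the equality case of the Riccati argument behind Corollary \ref{cor:C-BG}. First, I would show that equality in \eqref{eq:thm:rigidity-volume-comparison-1} for all $0<r<R<\tau_\kappa$ forces, for every $\theta$, $\tau_{f,0}(\theta)\ge\tau_\kappa$ and
\begin{align*}
\widehat J_{f,0}(s,\theta)=c\,\mathrm{sn}_\kappa(s)^{n-1}\quad\text{for } s<\tau_\kappa .
\end{align*}
This follows from the standard argument: the ratio $\widehat J/\mathrm{sn}_\kappa^{n-1}$ is nonincreasing in $s$ for each $\theta$, so equality of the integrated quantities forces constancy. The constant $c$ is fixed by the behaviour as $s\to0$, where $\widehat J\sim h(0)^{-(n+1)}t^{n-1}$ and $s\sim h(0)^{-2}t$. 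Equality in the Riccati inequality also forces equality in the curvature bound in the radial direction. In one dimension along the ray, write $u(t)=h(t\theta)$. The radial-direction equality is then the ODE
\begin{align*}
u''u^{3}=\kappa ,
\end{align*}
with $u(0)=h(0)$. The remaining information is the Jacobian identity
\begin{align*}
u^{-(n+1)}t^{n-1}=c\,\mathrm{sn}_\kappa\!\Big(\int_0^t u^{-2}\Big)^{n-1}.
\end{align*}

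Next I would solve for $u$. I expect the Jacobian identity alone to determine $u$. Raising to the power $1/(n-1)$ and setting $S(t)=\int_0^t u^{-2}$ gives $t\,u^{-(n+1)/(n-1)}=c'\,\mathrm{sn}_\kappa(S)$. One route is to differentiate this and compare with $S'=u^{-2}$. The cleaner route is to use the ODE: its solutions with $u(0)=h(0)$ are $u=\sqrt{a+bt+\kappa' t^2}$ with $a=h(0)^2$, and a direct check shows the Jacobian identity forces $b=0$ (equivalently $u'(0)=0$, i.e.\ $\d h(0)=0$). Substituting the candidate $u=h(0)\sqrt{1+\widetilde\kappa t^2}$ then gives $S(t)=h(0)^{-2}\arctan$-type expressions, which reproduce $\mathrm{sn}_\kappa(S)$ exactly with $\widetilde\kappa=h(0)^{-4}\kappa$. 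This is the computation in Proposition \ref{prop:quadrics}.

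Since the same formula holds for every $\theta$, I obtain $h(x)=h(0)\sqrt{1+\widetilde\kappa|x|^2}$ on $\Omega$. The domain is identified next. For $\kappa<0$, $h$ vanishes at $|x|=1/\sqrt{-\widetilde\kappa}$, so $\Omega\subset\Omega(\widetilde\kappa)$. Conversely, the requirement $\tau_{f,0}(\theta)\ge\tau_\kappa$ forces each ray to extend at least until $S$ reaches $\tau_\kappa$, which happens exactly at the boundary of $\Omega(\widetilde\kappa)$ (infinity when $\kappa\ge0$, where $S\to\pi/(2\sqrt{\kappa})$ as $t\to\infty$ for $\kappa>0$). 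Hence $\Omega=\Omega(\widetilde\kappa)$. Finally, $F_h(\Omega)$ is the radial graph of $\sqrt{1+\widetilde\kappa|x|^2}$ up to scaling, which is a rotationally symmetric ellipsoid cap, paraboloid-type quadric, or hyperboloid sheet.

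I expect the main obstacle to be the first step: justifying that equality of the ball-volume ratios for all $r<R<\tau_\kappa$ yields pointwise Jacobian equality and $\tau_{f,0}(\theta)\ge\tau_\kappa$ for every $\theta$ in this non-complete setting. Here I would adapt the monotonicity argument of \cite[Theorem 4.8]{kuwae2021rigidity}. The second delicate point is extracting $\d h(0)=0$, which I would get from the second-order expansion of the Jacobian identity at $t=0$.
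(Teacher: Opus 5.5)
There is a genuine gap at the point you yourself flagged as delicate: $\d h(0)=0$ cannot be extracted from information along the rays, and an error in your Jacobian hides this. The density to be compared with $\mathrm{sn}_\kappa^{n-1}(s)$ is the density with respect to $\d s$. Since $\d t=h^2\,\d s$, it equals $h^{-(n+1)}t^{n-1}\cdot h^2=h(t\theta)^{-(n-1)}t^{n-1}$, as in \eqref{eq:lem:quadrics-eq-1}. So the ray identity is $t/u(t)=c'\,\mathrm{sn}_\kappa(S(t))$. Your version $u^{-(n+1)}t^{n-1}=c\,\mathrm{sn}_\kappa(S)^{n-1}$ mixes the $\d t$-density with the $\d s$-model. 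For $\kappa\neq0$ it is not satisfied by $u=\sqrt{1+\kappa t^2}$, since it would force $u^{-2}$ to be constant. It is also inconsistent with your own ODE: comparing $t^3$-coefficients gives $u''(0)=\frac{(n-1)\kappa}{n+5}$, whereas $u^3u''=\kappa$ gives $u''(0)=\kappa$. Only this wrong exponent makes the $t^2$-coefficient of the expansion produce $u'(0)=0$.

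With the correct identity (normalize $h(0)=1$), every $u(t)=\sqrt{(1+\beta t)^2+\kappa t^2}$ with $\beta\in\mathbb{R}$ satisfies $t/u=\mathrm{sn}_\kappa\bigl(\int_0^t u^{-2}\bigr)$ exactly, and also $u^3u''=\kappa$. This is Lemma~\ref{lem:quadrics}, where $\beta=\langle\nabla h(0),\theta\rangle$ appears as a free integration constant. So no ``direct check'' or expansion at $t=0$ can force $\beta=0$.

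Here is a concrete obstruction. Take $\kappa<0$ and $0<|v|<\sqrt{-\kappa}$, and set $h^2=(1+\langle v,x\rangle)^2+\kappa|x|^2$ on the bounded convex set $\{1+\langle v,x\rangle>\sqrt{-\kappa}\,|x|\}$. This $h$ has $\tau_{f_h,0}\equiv+\infty$ and $\widehat J_{f_h,0}=\mathrm{sn}_\kappa^{n-1}$ on every ray, hence satisfies \eqref{eq:thm:rigidity-volume-comparison-1}. It is excluded only because it violates \eqref{eq:thm:rigidity-volume-comparison-0} in directions orthogonal to $v$. The missing ingredient is therefore non-radial.

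The paper supplies it in two pieces. For $\kappa\neq0$ (Lemma~\ref{lem:quadrics-further}), take points $te$ with $e=\nabla h(0)/|\nabla h(0)|$ and directions $\theta\perp e$. There the explicit formula gives $\Hess_\delta h(\theta,\theta)-\kappa h^{-3}=\kappa h^{-3}(h^2-1)$, which is negative on one side of $t=0$, contradicting $\Hess_\delta h\ge\kappa h^{-3}\delta$. For $\kappa=0$ (Lemma~\ref{lem:domain}), the paper uses $\tau_{f_h,0}(\theta)\ge\tau_\kappa=+\infty$ for a.e.\ $\theta$. This fails on the positive-measure set $\{\langle\nabla h(0),\theta\rangle>0\}$, since $\int_0^\infty(1+\beta\xi)^{-2}\,\d\xi=1/\beta$.

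You already obtain $\tau_{f,0}\ge\tau_\kappa$ in your first step, and that would also dispose of $\kappa>0$. For $\kappa<0$, however, the transverse use of the curvature bound cannot be avoided. Once this is repaired, the rest of your outline goes through as in the paper: the equality argument, solving along rays, globalizing along rays as in Lemma~\ref{lem:globalization}, and identifying $\Omega$.
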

We first prove this theorem under the normalization $h(0) = 1$.
In order to do so,
we prove Lemmas \ref{lem:sn} to \ref{lem:domain}.
In Lemma \ref{lem:sn},
we show an identity of the Jacobian.
Using it,
we determine $h$ locally in Lemmas \ref{lem:quadrics} and \ref{lem:quadrics-further}.
Then we determine $h$ globally on $\Omega$ in Lemma \ref{lem:globalization}.
Lastly,
by using this,
we determine the shape of $\Omega$ in Lemma \ref{lem:domain}.
All these arguments are conducted under the assumption of the normalization $h(0)=1$.
After that,
we move on to the general case that does not assume $h(0)=1$.
Indeed,
we prove Theorem \ref{thm:rigidity-volume-comparison} by applying lemmas to $\widetilde{h} := h(0)^{-1}h$.

Throughout the following lemmas (Lemma \ref{lem:sn} to \ref{lem:domain}),
we assume the same assumptions as in Theorem \ref{thm:rigidity-volume-comparison},
and we further assume $h(0) = 1$.
The first lemma is as follows (see also Kuwae--Sakurai \cite[Theorem 4.8]{kuwae2021rigidity}):
\begin{lemma}\label{lem:sn}
For $\theta \in \mathbb{S}^{n-1}$ and $0 < s < \min\{\tau_{f_h,0}(\theta), \tau_\kappa\}$,
we have 
\begin{align*}
     \widehat{J}_{f_h,0} (s,\theta) = \mathrm{sn}_\kappa^{n-1}(s).
\end{align*}
\end{lemma}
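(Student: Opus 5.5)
We work on $(\Omega,\delta)$ with weight $f_h=(n-1)\log h$, which by Proposition \ref{prop:radial-graph} is isometric to $(F_h(\Omega),\overline{g}_h)$ with weight $\overline{f}_h$. Since $\Omega$ is convex and $0\in\Omega$, the $\delta$-geodesics from $0$ are the rays $t\mapsto t\theta$. They are minimizing up to $\rho_0(\theta)=\tau_0(\theta)$, and there is no cut locus, so every point of $\Omega$ is reached. The plan is the standard rigidity argument of Bishop--Gromov type, as in Kuwae--Sakurai \cite[Theorem 4.8]{kuwae2021rigidity}. We first establish a pointwise monotonicity for the ratio $\overline{J}_{f_h,0}(s,\theta)/\mathrm{sn}_\kappa^{n-1}(s)$. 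Then we show that equality \eqref{eq:thm:rigidity-volume-comparison-1} forces this ratio to be constant. Finally, we identify the constant as $1$ using the behavior as $s\to0$.

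\emph{Step 1.} Along $\gamma_\theta(t)=t\theta$ we use the Riccati inequality of Wylie--Yeroshkin \cite[Lemma 4.1]{wylie2016geometry} (valid along minimizing geodesics, as noted in the proof of Corollary \ref{cor:C-BG}), written in the re-parametrized variable $s=s_{f_h,0,\theta}(t)$. Under \eqref{eq:thm:rigidity-volume-comparison-0} it gives that
\begin{align*}
s\mapsto \frac{\widehat{J}_{f_h,0}(s,\theta)}{\mathrm{sn}_\kappa^{n-1}(s)}
\end{align*}
is non-increasing on $(0,\min\{\tau_{f_h,0}(\theta),C_\kappa\})$. Extending by $0$ beyond $\tau_{f_h,0}(\theta)$, the truncated ratio $\overline{J}_{f_h,0}/\overline{\mathrm{sn}}_\kappa^{n-1}$ is non-increasing on $(0,\tau_\kappa)$. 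For the limit at $s\to0$, note that $J_0(t,\theta)=t^{n-1}$ in Euclidean polar coordinates and $f_h(0)=0$ since $h(0)=1$. Hence $\widehat{J}_{f_h,0}(s,\theta)= e^{-f_h(s\theta+o(s))}(s+o(s))^{n-1}$, and the ratio tends to $1$ as $s\to0$. The ratio is therefore at most $1$ throughout.

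\emph{Step 2.} We write
\begin{align*}
\mathfrak{m}\bigl(C_{\overline{g}_h,\overline{f}_h,r}\bigr)=\int_{\mathbb{S}^{n-1}}\int_0^r\overline{J}_{f_h,0}(s,\theta)\,\d s\,\d\theta.
\end{align*}
By the usual lemma (if $a/b$ is non-increasing with $b>0$, then $\int_0^R a/\int_0^R b\le\int_0^r a/\int_0^r b$), Step 1 yields Corollary \ref{cor:C-BG}. Equality for every $0<r<R<\tau_\kappa$ forces the function
\begin{align*}
\Phi(s):=\int_{\mathbb{S}^{n-1}}\overline{J}_{f_h,0}(s,\theta)\,\d\theta\Big/\bigl(\omega_{n-1}\mathrm{sn}_\kappa^{n-1}(s)\bigr)
\end{align*}
to be constant on $(0,\tau_\kappa)$. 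Letting $r\to0$, this constant equals the limit $1$ from Step 1. Since each integrand ratio is at most $1$ and the average equals $1$, we get $\overline{J}_{f_h,0}(s,\theta)=\mathrm{sn}_\kappa^{n-1}(s)$ for a.e. $\theta$. Continuity in $\theta$ then upgrades this to all $\theta$ with $s<\tau_{f_h,0}(\theta)$.

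\emph{Main obstacle.} The delicate point is that derivative of constancy. From $\int_0^R a=c\int_0^R b$ for all $R$ after fixing $r$ we must obtain $a\equiv cb$, and this requires handling the truncation $\tau_{f_h,0}(\theta)<\tau_\kappa$ carefully: past truncation $\overline{J}=0<\mathrm{sn}_\kappa^{n-1}$, which strictly lowers the ratio. To deal with this, we would differentiate the identity $\int_0^R\Phi\,\mathrm{sn}_\kappa^{n-1}=\Phi(0^+)\int_0^R\mathrm{sn}_\kappa^{n-1}$ in $R$, using monotonicity of $\Phi$ to conclude $\Phi\equiv1$. Once $\Phi\equiv1$, truncation cannot occur before $\tau_\kappa$ on a set of positive measure. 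This is consistent with the statement, which only claims equality for $s<\min\{\tau_{f_h,0}(\theta),\tau_\kappa\}$.
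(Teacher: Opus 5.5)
Your proposal is correct and follows essentially the paper's route: the pointwise bound $\overline{J}_{f_h,0}\le\overline{\mathrm{sn}}_\kappa^{n-1}$ (which you derive from the monotone Jacobian ratio tending to $1$ as $s\to 0$, where the paper cites Kuwae--Sakurai), then the identity $\mathfrak{m}_{\frac{n+1}{n-1}f_h,\delta}(C_{\delta,f_h,r}(0))=v(n,\kappa,r)$ forced by the equality assumption and $h(0)=1$, then a.e.\ equality, then continuity. The differentiation step in your ``main obstacle'' paragraph is not needed: once the constant is identified as $1$, equal integrals together with the pointwise bound already give a.e.\ equality, exactly as in the paper.
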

\begin{proof}
It is enough to work on $(\Omega, \delta)$ since $(F_h(\Omega),\overline{g}_h)$ is isometric to $(\Omega,\delta)$.
We take $r > 0$ such that $s < r < \min\{ \tau_{f_h,0}(\theta), \tau_\kappa\}$.
It follows from Corollary \ref{cor:C-BG},
\eqref{eq:thm:rigidity-volume-comparison-1} and the assumption $h(0)=1$ that 
\begin{align}\label{eq:lem:quadrics-1}
    \mathfrak{m}_{\frac{n+1}{n-1}f_h,\delta}\left( C_{\delta,f_h,r}(0) \right) = v(n,\kappa,r).
\end{align}
For this,
we also refer to the argument in Kuwae--Sakurai \cite[Lemma 4.1]{kuwae2021rigidity}.
This implies 
\begin{align}\label{eq:thm:rigidity-volume-comparison-2}
    \int_{\mathbb{S}^{n-1}} \int_0^r \overline{J}_{f_h,0}(\xi,\eta)\,\d \xi\d \eta = \int_{\mathbb{S}^{n-1}} \int_{0}^r \overline{\mathrm{sn}}_\kappa^{n-1}(\xi)\,\d \xi\d \eta.
\end{align}
On the other hand,
by the same argument as in Kuwae--Sakurai \cite[Lemma 4.1]{kuwae2021rigidity},
we have 
\begin{align}\label{eq:thm:rigidity-volume-comparison-2-5}
    \overline{J}_{f_h,0} (\xi,\eta) 
    \leq 
    \overline{\mathrm{sn}}_\kappa^{n-1}(\xi)
\end{align}
for $(\xi,\eta) \in (0,r) \times \mathbb{S}^{n-1}$.
Together with \eqref{eq:thm:rigidity-volume-comparison-2},
we have 
\begin{align}\label{eq:lem:sn-1}
    \overline{J}_{f_h,0} (\xi,\eta) = \overline{\mathrm{sn}}_\kappa^{n-1}(\xi) 
\end{align}
for almost every $(\xi,\eta) \in (0,r) \times \mathbb{S}^{n-1}$.
Since $r < \tau_\kappa$,
we see $\overline{\mathrm{sn}}_\kappa^{n-1}(\xi)  > 0$.
Hence,
we have $\xi < \tau_{f_h,0}(\eta)$ almost everywhere.
This implies $\overline{J}_{f_h,0} (\xi,\eta) = \widehat{J}_{f_h,0} (\xi,\eta)$ almost everywhere.
It follows from the continuity of $\widehat{J}_{f_h,0} (\xi,\eta)$ that we obtain the desired identity.
\end{proof}
We have the following assertion:
\begin{lemma}\label{lem:quadrics}
For every $0 < r < \tau_\kappa$ and $x \in C_{\delta,f_h,r}(0)$,
we have 
\begin{align*}
    h(x)^2 = \left(1 + \left\langle \nabla h(0),x \right\rangle\right)^2 + \kappa |x|^2.
\end{align*}
\end{lemma}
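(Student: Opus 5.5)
The plan is to turn the Jacobian identity of Lemma \ref{lem:sn} into an ODE for $h$ along each ray from the origin, and then solve that ODE explicitly. As in the proof of Lemma \ref{lem:sn}, we work on $(\Omega,\delta)$ with weight $f_h=(n-1)\log h$. Since $\Omega$ is convex and flat, the unit speed geodesics from $0$ are the rays $\gamma_{0,\theta}(t)=t\theta$, and they are minimizing as long as they stay in $\Omega$. Hence the unweighted Jacobian is $J_0(t,\theta)=t^{n-1}$, and
\begin{align*}
    J_{f_h,0}(t,\theta) = h(t\theta)^{-(n-1)}t^{n-1},\qquad s_{f_h,0,\theta}(t)=\int_0^t h(\xi\theta)^{-2}\,\d\xi .
\end{align*}
Fix $x\in C_{\delta,f_h,r}(0)$ with $r<\tau_\kappa$ and write $x=t_0\theta$. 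Every $t\in(0,t_0]$ satisfies $t<\tau_0(\theta)$ and $s(t)\le s(t_0)<r<\min\{\tau_{f_h,0}(\theta),\tau_\kappa\}$, so Lemma \ref{lem:sn} applies along the whole segment. Taking $(n-1)$-th roots of positive quantities gives
\begin{align*}
    \frac{t}{h(t\theta)} = \mathrm{sn}_\kappa\bigl(s(t)\bigr),\qquad 0<t\le t_0 .
\end{align*}

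Next, write $\phi(t):=h(t\theta)$, so that $s'(t)=\phi^{-2}$. Differentiating the identity once gives $(\phi-t\phi')/\phi^2=\mathrm{sn}_\kappa'(s)\,\phi^{-2}$, that is,
\begin{align*}
    \phi - t\phi' = \mathrm{sn}_\kappa'(s).
\end{align*}
Differentiating again and using $\mathrm{sn}_\kappa''=-\kappa\,\mathrm{sn}_\kappa$ together with $\mathrm{sn}_\kappa(s)=t/\phi$, we get $-t\phi''=-\kappa\, t\,\phi^{-3}$. Therefore
\begin{align*}
    \phi''=\kappa\,\phi^{-3}\quad\text{on } (0,t_0],
\end{align*}
and by continuity this extends to $t=0$. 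The initial data are $\phi(0)=h(0)=1$ and $\phi'(0)=\langle\nabla h(0),\theta\rangle=:B$.

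To solve the ODE, set $q(t):=(1+Bt)^2+\kappa t^2$. I will check directly that $\psi:=\sqrt{q}$ satisfies $\psi''=\kappa\psi^{-3}$. For a quadratic $q=At^2+2Bt+C$ one has
\begin{align*}
    \psi''=\frac{2qq''-q'^2}{4q^{3/2}}=\frac{AC-B^2}{q^{3/2}},
\end{align*}
and here $A=B^2+\kappa$ and $C=1$, so $AC-B^2=\kappa$. Moreover $\psi(0)=1$ and $\psi'(0)=B$. As long as $\phi$ stays positive, the right-hand side $\kappa\phi^{-3}$ is locally Lipschitz in $\phi$, so uniqueness for the ODE gives $\phi=\psi$ on $[0,t_0]$. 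Since $h>0$, positivity holds, and $q=\phi^2>0$ is automatic. Evaluating at $t=t_0$ yields
\begin{align*}
    h(x)^2=(1+\langle\nabla h(0),x\rangle)^2+\kappa|x|^2 .
\end{align*}

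The main obstacle is the bookkeeping on the range of validity: we must ensure that every point of the segment from $0$ to $x$ lies in the region where Lemma \ref{lem:sn} holds. This follows from the monotonicity of $s$ in $t$ and from $\tau_0(\theta)=\rho_0(\theta)$. We also need the identity near $t=0$ to pin down the initial data, which follows from continuity since $t/h(t\theta)\to 0$ and $s\to 0$ as $t\to 0$.
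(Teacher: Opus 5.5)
Your proof is correct, but it handles the integration step differently from the paper. Both arguments start from $t/h(t\theta)=\mathrm{sn}_\kappa(s_{f_h,0,\theta}(t))$ along the ray.

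The paper uses $\mathrm{ct}_\kappa'=-\mathrm{sn}_\kappa^{-2}$ to get the first-order identity $\frac{\d}{\d t}\mathrm{ct}_\kappa(s_{f_h,0,\theta}(t))=-t^{-2}$. It integrates this to $\mathrm{ct}_\kappa(s_{f_h,0,\theta}(t))=t^{-1}+C(\theta)$. The identity $\mathrm{ct}_\kappa^2+\kappa=\mathrm{sn}_\kappa^{-2}$ then gives $h(t\theta)^2=(1+C(\theta)t)^2+\kappa t^2$ directly, and $C(\theta)=\langle\nabla h(0),\theta\rangle$ is read off from the Taylor expansion at $t=0$.

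You instead differentiate twice to reach $\phi''=\kappa\phi^{-3}$, then verify the candidate $\sqrt{q}$ and invoke ODE uniqueness. The paper's route derives the quadratic rather than checking a known answer. It also needs neither a uniqueness theorem nor a positivity argument for $q$.

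In your version that positivity is not automatic from the formula when $\kappa\le 0$; for instance, $q$ vanishes at $t=1/\sqrt{-\kappa}$ if $\nabla h(0)=0$. Your sentence ``$q=\phi^2>0$ is automatic'' is really a continuation argument. On any initial interval where $q>0$ you have $\phi=\sqrt q$, hence $q=\phi^2\geq\min_{[0,t_0]}\phi^2>0$, so $q$ cannot reach $0$ before $t_0$.

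What your route buys is a clear geometric reading. The equation $\phi''=\kappa\phi^{-3}$ says that the bound $\Hess_\delta h\geq\kappa h^{-3}\delta$, which is \eqref{eq:thm:rigidity-volume-comparison-0} rewritten via \eqref{eq:Ricci-Hess-relation}, is attained in the radial direction. This is the same mechanism that drives Lemma \ref{lem:quadrics-further}.

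One small slip: in the chain $s(t)\le s(t_0)<r<\min\{\tau_{f_h,0}(\theta),\tau_\kappa\}$, the inequality $r<\tau_{f_h,0}(\theta)$ is unjustified. What you actually need is $s(t_0)<\tau_{f_h,0}(\theta)$. That follows from $t_0<\tau_0(\theta)$ and the strict monotonicity of $s_{f_h,0,\theta}$, as you indicate in your last paragraph.
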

\begin{proof}
For $\theta \in \mathbb{S}^{n-1}$,
$0 < s < \tau_{f_h,0}(\theta)$ and $t := t_{f_h,0,\theta}(s)$,
we have 
\begin{align}\label{eq:lem:quadrics-eq-1}
    \widehat{J}_{f_h,0}(s,\theta) = \e^{-f_h(t\theta)} J_0(t,\theta) = \frac{1}{h(t\theta)^{n-1}}\, t^{n-1}.
\end{align}
Hence, 
it follows from Lemma \ref{lem:sn} that
\begin{align}\label{eq:thm:rigidity-volume-comparison-3}
    \left( \frac{t}{h(t \theta)} \right)^{n-1} =\mathrm{sn}_\kappa^{n-1}(s)
\end{align}
for $0 < s < \min\left\{ \tau_\kappa, \tau_{f_h,0}(\theta) \right\}$.
Using \eqref{eq:thm:rigidity-volume-comparison-3} and $\mathrm{ct}'_\kappa(s) = - \mathrm{sn}_\kappa(s)^{-2}$,
we have 
\begin{align*}
    \frac{\d }{\d t} \mathrm{ct}_\kappa (s_{f_h, 0, \theta}(t)) 
    = - \frac{1}{\mathrm{sn}_\kappa^2 (s_{f_h,0,\theta}(t)) h(t\theta)^2}= -\frac{1}{t^2}.
\end{align*}
This implies 
\begin{align*}
    \mathrm{ct}_\kappa(s_{f_h, 0, \theta}(t)) = \frac{1}{t} + C(\theta),
\end{align*}
where $C(\theta)$ is a constant depending on $\theta$.
Finally,
again using \eqref{eq:thm:rigidity-volume-comparison-3} and $\mathrm{ct}_\kappa(s)^2 + \kappa = \mathrm{sn}_\kappa(s)^{-2}$,
we derive
\begin{align}\label{eq:thm:rigidity-volume-comparison-4}
    h(t\theta)^2 = t^2 (\mathrm{ct}_\kappa (s_{f_h,0,\theta}(t))^2 + \kappa) = 1 + 2 C(\theta) t + (C(\theta)^2 + \kappa)t^2.
\end{align}
Since $h \in C^\infty(\Omega)$ with $h(0) = 1$,
for sufficiently small $t$,
we have 
\begin{align*}
    h(t\theta) = 1 + t \langle \nabla h(0),\theta \rangle + O(t^2),
\end{align*}
which implies 
\begin{align*}
    h(t\theta)^2 = 1 + 2 t \langle \nabla h(0),\theta \rangle + O(t^2).
\end{align*}
Comparing this with \eqref{eq:thm:rigidity-volume-comparison-4},
we have $C(\theta) = \langle \nabla h(0),\theta \rangle$.
Therefore,
for $\theta \in \mathbb{S}^{n-1}$,
$0 <s < \min\{\tau_\kappa, \tau_{f_h,0}(\theta)\}$ and $t := t_{f_h,0,\theta}(s)$,
we have 
\begin{align}\label{eq:lem:quadrics-eq-2}
    h(t\theta)^2 = (1 + t \langle \nabla h(0),\theta \rangle)^2 + \kappa t^2 . 
\end{align}

Now we fix $x \in C_{\delta,f_h,r}(0)$ with $x \neq 0$,
and we write $x = t\theta$ with $t > 0$ and $\theta \in \mathbb{S}^{n-1}$.
Since $x \in C_{\delta,f_h,r}(0)$,
we have $t < \tau_0(\theta)$ and $s_{f_h, 0,\theta}(t) < r$.
Hence,
we have 
\begin{align*}
    s_{f_h, 0,\theta}(t) < \min\{\tau_\kappa, \tau_{f_h,0}(\theta)\}.
\end{align*} 
Therefore,
combining this with \eqref{eq:lem:quadrics-eq-2},
we conclude the proof.
\end{proof}
Furthermore,
we have the following assertion:
\begin{lemma}\label{lem:quadrics-further}
We have $\nabla h(0) = 0$ when $\kappa \neq 0$.
\end{lemma}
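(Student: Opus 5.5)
The plan is to combine the explicit local form of $h$ from Lemma \ref{lem:quadrics} with the curvature assumption \eqref{eq:thm:rigidity-volume-comparison-0}. I expect the volume identity alone to be insufficient. The relation $\mathrm{ct}_\kappa(s_{f_h,0,\theta}(t)) = t^{-1} + C(\theta)$ together with $s' = h^{-2}$ seems to be self-consistent for every value of $C(\theta)$, so the Jacobian identity by itself should not force $\nabla h(0)=0$. The pointwise curvature bound has to supply the missing information.

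\emph{Step 1: local form of $h$.} Set $b := \nabla h(0)$, $\ell(x) := 1+\langle b,x\rangle$ and $Q(x) := \ell(x)^2 + \kappa|x|^2$. Since $C_{\delta,f_h,r}(0)$ contains a neighbourhood of $0$ for small $r>0$, Lemma \ref{lem:quadrics} gives $h^2 = Q$ on a neighbourhood $U$ of $0$.

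\emph{Step 2: rewrite the curvature bound.} By \eqref{eq:Ricci-Hess-relation} and the isometry $F_h$, assumption \eqref{eq:thm:rigidity-volume-comparison-0} becomes $(n-1)\Hess_\delta h/h \geq (n-1)\kappa h^{-4}\delta$ on $\Omega$, that is, $h^3\Hess_\delta h \geq \kappa\,\delta$. On $U$ we have $h = Q^{1/2}$, so
\begin{align*}
h\,\Hess_\delta h = \tfrac12\Hess_\delta Q - \frac{\d Q\otimes \d Q}{4Q}.
\end{align*}
With $\Hess_\delta Q = 2(b\otimes b + \kappa\,\delta)$ and $\tfrac12\nabla Q = v := \ell b + \kappa x$, the inequality turns into
\begin{align*}
Q\bigl(\langle b,w\rangle^2 + \kappa|w|^2\bigr) - \langle v,w\rangle^2 \geq \kappa |w|^2 \quad \text{for all } w\in\mathbb{R}^n,\ x\in U.
\end{align*}

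\emph{Step 3: choose test points.} Suppose $b\neq 0$. Take $x = \eps b$ with $|\eps|$ small, and choose $w\neq 0$ with $w\perp b$; this is possible because $n\geq 2$. Then $v$ is parallel to $b$, so $\langle v,w\rangle = 0$ and $\langle b,w\rangle = 0$. The inequality reduces to
\begin{align*}
\kappa\bigl(Q(\eps b) - 1\bigr)|w|^2 \geq 0, \qquad Q(\eps b) - 1 = 2\eps|b|^2 + O(\eps^2).
\end{align*}
Since $\kappa\neq 0$, letting $\eps$ take both signs makes $\kappa(Q(\eps b)-1)$ take both signs, which is a contradiction. Hence $b = 0$.

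The main obstacle is recognising that Step 2 is needed, namely that the curvature hypothesis must be used pointwise rather than only through the volume identity. After that, the remaining work is the routine Hessian computation together with the choice of $x$ parallel to $b$ and $w$ orthogonal to $b$, which removes the rank-one term.
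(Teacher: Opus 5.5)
Your proof is correct and is essentially the paper's argument. Both assume $\nabla h(0)\neq 0$, use $h^2=Q$ near $0$ from Lemma \ref{lem:quadrics}, and rewrite the curvature bound as $\Hess_\delta h\geq \kappa h^{-3}\delta$ via \eqref{eq:Ricci-Hess-relation}; testing at $x=t\nabla h(0)$ in a direction orthogonal to $\nabla h(0)$ removes the gradient term, leaving $\kappa(h^2-1)\geq 0$, whose sign flips with $t$. The only cosmetic difference is that you expand $h\Hess_\delta h$ entirely in terms of $Q$, whereas the paper uses $\Hess_\delta Q = 2h\Hess_\delta h + 2\,\d h\otimes \d h$ together with $\d Q_{x_t}(\theta)=0$.
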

\begin{proof}
We show this by contradiction.
We assume $\nabla h(0) \neq 0$.
We set $Q(x):= h(x)^2$ and fix $0 < r < \tau_\kappa$.
By Lemma \ref{lem:quadrics},
we see 
\begin{align*}
    \nabla Q(x) = 2 \left( 1 + \langle \nabla h(0),x \rangle \right) \nabla h(0) + 2\kappa x.
\end{align*}
for $x \in C_{\delta,f_h,r}(0)$.
There exists $\eps > 0$ such that $B_{\delta,\eps}(0) \subset C_{\delta, f_h, r}(0)$,
where $B_{\delta,\eps}(0) = \{x \in \mathbb{R}^n\ |\ |x| < \eps\}$.
We set $e := |\nabla h(0)|^{-1} \nabla h(0)$ and $x_t := t e$ for $ |t| < \eps$.
Then for $\theta \in \mathbb{S}^{n-1}$ with $\langle \theta, e \rangle = 0$,
we obtain 
\begin{align}\label{eq:thm:rigidity-volume-comparison-9}
    \d Q_{x_t}(\theta) = 0.
\end{align}
Moreover,
we have
\begin{align*}
    \Hess_\delta Q(\theta,\theta) = 2 \langle \nabla h(0),\theta \rangle^2 + 2\kappa |\theta|^2 = 2 \kappa.
\end{align*}
Also,
it follows that
\begin{align*}
    \Hess_\delta Q (\theta,\theta) = 2 h \Hess_{\delta} h(\theta,\theta) + 2 \d h(\theta)^2 = 2 h \Hess_{\delta} h(\theta,\theta),
\end{align*}
where we used \eqref{eq:thm:rigidity-volume-comparison-9} in the second equality.
Combining these,
we obtain 
\begin{align*}
    h(x_t) \left( \Hess_\delta h \right)_{x_t}(\theta,\theta) = \kappa.
\end{align*}
Therefore,
it follows that
\begin{align}\label{eq:thm:rigidity-volume-comparison-5}
    \left( \Hess_\delta h \right)_{x_t}(\theta,\theta) - \kappa h(x_t)^{-3} \delta(\theta, \theta) &= \kappa h(x_t)^{-3} \left( h(x_t)^2 - 1 \right)  \\
    &= \kappa h(x_t)^{-3} \left\{ 2 |\nabla h(0)| t + (|\nabla h(0)|^2 + \kappa )t^2 \right\},\nonumber
\end{align}
where we used Lemma \ref{lem:quadrics} in the last equality.
If $\kappa > 0$,
we choose $t < 0$ sufficiently close to $0$,
and if $\kappa < 0$,
we choose $t > 0$ sufficiently close to $0$,
then the right-hand side is negative in both cases.
On the other hand,
together with \eqref{eq:Ricci-Hess-relation},
the curvature condition \eqref{eq:thm:rigidity-volume-comparison-0} gives
\begin{align}
    \Hess_\delta h \geq \kappa h^{-3}\delta.
\end{align}
This implies \eqref{eq:thm:rigidity-volume-comparison-5} is non-negative.
This is a contradiction.
Therefore,
we see $\nabla h(0) = 0$.
We conclude the proof.
\end{proof}
We determine $h$ globally on $\Omega$ as follows:
\begin{lemma}\label{lem:globalization}
The assertion in Lemma \ref{lem:quadrics} holds for arbitrary $x \in \Omega$,
i.e.,
we have 
\begin{align}\label{eq:lem:globalization-1}
    h(x)^2 = \begin{cases}
        1 + \kappa |x|^2 &\text{ when } \kappa \neq 0,\\
        \left(1 + \langle \nabla h(0),x \rangle\right)^2 &\text{ when } \kappa =0.
    \end{cases}
\end{align}
\end{lemma}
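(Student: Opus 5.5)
Since $\Omega$ is convex with $0\in\Omega$ and $(\Omega,\delta)$ is flat, every $x\in\Omega\setminus\{0\}$ is written as $x=t\theta$ with $\theta\in\mathbb{S}^{n-1}$ and $0<t<\tau_0(\theta)=\rho_0(\theta)$. The segment $[0,x]$ is the unique minimizing geodesic, so Lemma \ref{lem:quadrics} together with Lemma \ref{lem:quadrics-further} gives \eqref{eq:lem:globalization-1} whenever $s_{f_h,0,\theta}(t)<\tau_\kappa$. For $\kappa\le 0$ we have $\tau_\kappa=+\infty$, so every $x\in\Omega$ lies in some $C_{\delta,f_h,r}(0)$ with $r<\tau_\kappa$, and the claim follows directly. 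The only remaining case is $\kappa>0$, where we must show that the reparametrized length along each ray never reaches $\tau_\kappa=\frac{\pi}{2\sqrt\kappa}$ inside $\Omega$.

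For $\kappa>0$, fix $\theta$ and set $T:=\sup\{t<\tau_0(\theta) : s_{f_h,0,\theta}(t)<\tau_\kappa\}$. On $[0,T)$ we know $h(t\theta)^2=1+\kappa t^2$. We compute $s$ explicitly:
\[
s_{f_h,0,\theta}(t)=\int_0^t\frac{\d\xi}{1+\kappa\xi^2}=\frac{1}{\sqrt\kappa}\arctan(\sqrt\kappa\, t).
\]
Here we used $\e^{-2f_h/(n-1)}=h^{-2}$. This is consistent with \eqref{eq:thm:rigidity-volume-comparison-3}, since $\mathrm{sn}_\kappa(s)=t/\sqrt{1+\kappa t^2}$.

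The key observation is that this expression is $<\frac{\pi}{2\sqrt\kappa}$ for every finite $t$. Suppose $T<\tau_0(\theta)$. Then by continuity of $s_{f_h,0,\theta}$ on $[0,\tau_0(\theta))$ (which holds because $h$ is smooth and positive there), $s(T)=\lim_{t\nearrow T}\frac{1}{\sqrt\kappa}\arctan(\sqrt\kappa t)<\tau_\kappa$. By continuity again, $s<\tau_\kappa$ on a neighborhood beyond $T$, contradicting the definition of $T$. Hence $T=\tau_0(\theta)$, and every point $t\theta\in\Omega$ lies in $C_{\delta,f_h,r}(0)$ for some $r<\tau_\kappa$. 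Lemma \ref{lem:quadrics} then applies, and we conclude by Lemma \ref{lem:quadrics-further}.

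The main obstacle is this continuity/bootstrap step for $\kappa>0$: the identity is only known on the truncated region, and one must exclude that the reparametrized distance reaches $\tau_\kappa$ before leaving $\Omega$. The explicit arctan bound is exactly what rules this out. For $\kappa=0$ no bootstrap is needed, but we should check that the formula is stated with $\nabla h(0)$ kept, as Lemma \ref{lem:quadrics-further} does not apply. Positivity of $h$ on $\Omega$ then automatically gives $1+\langle\nabla h(0),x\rangle>0$ by continuity along rays.
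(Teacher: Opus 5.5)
Your proposal is correct and follows essentially the same route as the paper. For $\kappa>0$ you use the explicit formula $s_{f_h,0,\theta}(t)=\frac{1}{\sqrt{\kappa}}\mathrm{arctan}(\sqrt{\kappa}\,t)<\tau_\kappa$ to rule out the reparametrized length reaching $\tau_\kappa$ along a ray inside $\Omega$, phrased via a supremum $T$ instead of the paper's first hitting time $T_*$. For $\kappa\le 0$ the claim follows directly because $\tau_\kappa=+\infty$.
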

\begin{proof}
We fix $x \in \Omega$ with $x \neq 0$.
We write $x = T\theta \in \Omega$ with $T > 0$ and $\theta \in \mathbb{S}^{n-1}$.
First,
we consider the case $\kappa > 0$.
We have 
\begin{align}\label{eq:thm:rigidity-volume-comparison-7}
    s_{f_h,0,\theta}(T) < \tau_\kappa.
\end{align}
We show this by contradiction.
We assume \eqref{eq:thm:rigidity-volume-comparison-7} does not hold.
Then,
there exists $T_* \in (0,T]$ such that $s_{f_h,0,\theta}(T_*) = \tau_\kappa$ and $s_{f_h,0,\theta}(t) < \tau_\kappa$ for $t \in (0,T_*)$.
For each $t \in (0,T_*)$,
we fix $r_t \in (s_{f_h,0,\theta}(t),\tau_\kappa)$.
Then we see $t \theta \in C_{\delta,f_h,r_t}(0)$.
Therefore,
Lemmas \ref{lem:quadrics} and \ref{lem:quadrics-further} give
\begin{align*}
    h(t\theta)^2 = 1 + \kappa t^2.
\end{align*}
Therefore,
for $t \in (0,T_*)$,
we have
\begin{align*}
    s_{f_h,0,\theta}(t) = \int_0^t \frac{1}{1 + \kappa \xi^2}\,\d \xi = \frac{1}{\sqrt{\kappa}}\mathrm{arctan}\left( \sqrt{\kappa}\,t \right).
\end{align*}
Letting $t \nearrow T_*$,
we obtain
\begin{align*}
    s_{f_h,0,\theta}(T_*) = \frac{1}{\sqrt{\kappa}}\mathrm{arctan}\left( \sqrt{\kappa}\,T_* \right) < \tau_\kappa,
\end{align*}
which contradicts the definition of $T_*$.
Therefore,
we have \eqref{eq:thm:rigidity-volume-comparison-7}.
Then for $r \in (s_{f_h,0,\theta}(T),\tau_\kappa)$,
we have $x \in C_{\delta,f_h,r}(0)$.
By Lemmas \ref{lem:quadrics} and \ref{lem:quadrics-further},
\eqref{eq:lem:globalization-1} holds on $C_{\delta,f_h,r}(0)$.
Therefore,
we arrive at the desired assertion.

Next,
we consider the case $\kappa \leq 0$.
Since $\tau_\kappa = +\infty$,
we have \eqref{eq:thm:rigidity-volume-comparison-7}.
Then the desired assertion follows by the same argument above.
\end{proof}
We determine the shape of $\Omega$ as follows:
\begin{lemma}\label{lem:domain}
We also have $\nabla h(0) = 0$ when $\kappa = 0$.
Furthermore,
we have $\Omega = \Omega(\kappa)$ for $\kappa \in \mathbb{R}$.
\end{lemma}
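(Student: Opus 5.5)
The plan is to use the global formula of Lemma \ref{lem:globalization} together with the fact that the volume equality \eqref{eq:thm:rigidity-volume-comparison-1} is required for \emph{every} $0<r<R<\tau_\kappa$. The key observation is that Lemma \ref{lem:sn} gives $\widehat{J}_{f_h,0}(s,\theta)=\mathrm{sn}_\kappa^{n-1}(s)$ only for $s<\tau_{f_h,0}(\theta)$. In the proof of Lemma \ref{lem:sn}, however, we also obtained $\xi<\tau_{f_h,0}(\eta)$ for almost every $(\xi,\eta)\in(0,r)\times\mathbb{S}^{n-1}$, for every $r<\tau_\kappa$. Since $\tau_{f_h,0}$ is lower semicontinuous (or by continuity in $\theta$ for a convex domain containing $0$), I expect this to give
\begin{align*}
\tau_{f_h,0}(\theta)\geq \tau_\kappa \quad\text{for every } \theta\in\mathbb{S}^{n-1}.
\end{align*}
In words, each ray from $0$ must survive up to reparametrized length $\tau_\kappa$. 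Both assertions then reduce to computing $\tau_{f_h,0}(\theta)=\int_0^{\rho_0(\theta)}h(t\theta)^{-2}\,\d t$ explicitly from \eqref{eq:lem:globalization-1}, where $\rho_0(\theta)=\sup\{t : t\theta\in\Omega\}$.

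\textbf{Case $\kappa=0$.} Here $\tau_\kappa=+\infty$ and $h(t\theta)=1+t\langle\nabla h(0),\theta\rangle$, which is positive on $\Omega$. Suppose $a:=\nabla h(0)\neq 0$ and take $\theta=a/|a|$. Then $h(t\theta)=1+|a|t$ and
\begin{align*}
\int_0^{\rho}\frac{\d t}{(1+|a|t)^2}\leq \frac{1}{|a|}<\infty,
\end{align*}
which contradicts $\tau_{f_h,0}(\theta)=+\infty$. Hence $\nabla h(0)=0$ and $h\equiv1$. Then $\tau_{f_h,0}(\theta)=\rho_0(\theta)$ must be infinite for all $\theta$, so $\Omega=\mathbb{R}^n$.

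\textbf{Case $\kappa>0$.} Here $h(t\theta)^2=1+\kappa t^2$, so
\begin{align*}
\tau_{f_h,0}(\theta)=\frac{1}{\sqrt{\kappa}}\arctan\bigl(\sqrt{\kappa}\,\rho_0(\theta)\bigr).
\end{align*}
This is at least $\frac{\pi}{2\sqrt{\kappa}}$ only if $\rho_0(\theta)=\infty$, and this holds for every $\theta$, so $\Omega=\mathbb{R}^n$.

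\textbf{Case $\kappa<0$.} Positivity of $h^2=1+\kappa|x|^2$ on $\Omega$ forces $\Omega\subset B_{\delta,1/\sqrt{-\kappa}}(0)$. Since $\tau_\kappa=\infty$, we need
\begin{align*}
\int_0^{\rho_0(\theta)}\frac{\d t}{1+\kappa t^2}=\infty,
\end{align*}
which requires $\rho_0(\theta)=1/\sqrt{-\kappa}$ for all $\theta$. Because $\Omega$ is convex (in fact star-shaped suffices), this gives $\Omega=B_{\delta,1/\sqrt{-\kappa}}(0)=\Omega(\kappa)$.

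The main obstacle is the first step: rigorously extracting $\tau_{f_h,0}(\theta)\geq\tau_\kappa$ for \emph{all} $\theta$ from the almost-everywhere statement in Lemma \ref{lem:sn}. I would do this as follows. Since $\Omega$ is open and convex, $\rho_0$ is lower semicontinuous, and $h$ is continuous and positive. Fatou's lemma then shows that $\theta\mapsto\tau_{f_h,0}(\theta)$ is lower semicontinuous. Now suppose $\tau_{f_h,0}(\theta_0)<\tau_\kappa$ for some $\theta_0$. I would want strictness on a neighborhood of $\theta_0$, but lower semicontinuity gives the wrong direction for this. Instead I would argue directly: $\rho_0(\theta_0)$ is finite, and by convexity $\rho_0$ is upper semicontinuous on directions where it is finite, since a supporting hyperplane at the boundary point $\rho_0(\theta_0)\theta_0$ bounds nearby rays. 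With the explicit integrand, this yields $\tau_{f_h,0}<r<\tau_\kappa$ on a set of $\eta$ of positive measure, which contradicts the almost-everywhere conclusion $\xi<\tau_{f_h,0}(\eta)$ for $\xi$ close to $r$.
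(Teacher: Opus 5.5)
Your argument is correct and shares the paper's skeleton. You take the bound $\tau_{f_h,0}(\theta)\geq\tau_\kappa$ for almost every $\theta$ from the proof of Lemma \ref{lem:sn}, then evaluate $\tau_{f_h,0}(\theta)=\int_0^{\rho_0(\theta)}h(t\theta)^{-2}\,\d t$ explicitly via Lemma \ref{lem:globalization} for each sign of $\kappa$. The difference is where convexity enters.

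\emph{The paper's route.} It never upgrades to every $\theta$. It concludes $\rho_0(\theta)=+\infty$ (resp.\ $1/\sqrt{-\kappa}$) only for almost every $\theta$. It handles $\kappa=0$ through the positive-measure half-sphere $\Theta=\{\langle\nabla h(0),\theta\rangle>0\}$, on which $\tau_{f_h,0}\leq 1/\langle\nabla h(0),\theta\rangle<\infty$. Convexity of $\Omega$ is used only at the very end, to absorb the null set of directions.

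\emph{Your route.} You spend convexity up front to get pointwise statements. This lets you test the single direction $\nabla h(0)/|\nabla h(0)|$ and finish with star-shapedness alone, at the cost of an extra semicontinuity step. That step is quickest via the Minkowski gauge: $\rho_0=1/p_\Omega$ with $p_\Omega$ finite and convex, hence continuous, so $\rho_0$ is continuous as a $(0,+\infty]$-valued map. Composing with the monotone continuous maps $\frac{1}{\sqrt{\kappa}}\arctan(\sqrt{\kappa}\,\cdot)$ and $\frac{1}{\sqrt{-\kappa}}\mathrm{arctanh}(\sqrt{-\kappa}\,\cdot)$ then gives continuity of $\tau_{f_h,0}$ when $\kappa\neq 0$.

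One sentence needs repair. When $\kappa=0$ and $\nabla h(0)\neq 0$, the assumption $\tau_{f_h,0}(\theta_0)<\infty$ does not force $\rho_0(\theta_0)<\infty$. If $\langle\nabla h(0),\theta_0\rangle>0$, the integral $\int_0^\infty(1+t\langle\nabla h(0),\theta_0\rangle)^{-2}\,\d t$ converges on an infinite ray. So the supporting-hyperplane argument is unavailable there, but it is also unnecessary. The bound $\tau_{f_h,0}(\eta)\leq 1/\langle\nabla h(0),\eta\rangle$ on the open set $\{\langle\nabla h(0),\eta\rangle>0\}$ contradicts the almost-everywhere statement directly, exactly as in the paper.
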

\begin{proof}
We take $0 < r < \tau_\kappa$ arbitrarily.
By the same argument in deriving \eqref{eq:lem:sn-1},
we have 
\begin{align*}
    \overline{J}_{f_h,0}(s,\theta) = \mathrm{sn}_\kappa^{n-1}(s)
\end{align*}
for almost every $(s,\theta) \in (0,r)\times \mathbb{S}^{n-1}$.
For $s > \tau_{f_h,0}(\theta)$,
the left-hand side is equal to $0$,
whereas the right-hand side is not.
This implies $\tau_{f_h,0}(\theta) \geq r$ for almost every $\theta$.
Since we take $0 < r < \tau_\kappa$ arbitrarily,
we arrive at
\begin{align}\label{eq:lem:domain-eq-1}
    \tau_{f_h,0}(\theta) \geq \tau_\kappa
\end{align}
for almost every $\theta$.

First,
we consider the case $\kappa > 0$.
We note that 
\begin{align*}
    \tau_0(\theta) = \sup\{ t > 0 \ | \ t\theta \in \Omega \}
\end{align*}
for $\theta \in \mathbb{S}^{n-1}$.
Therefore,
for $\theta \in \mathbb{S}^{n-1}$ and $0 \leq \xi < \tau_0(\theta)$,
we have $\xi \theta \in \Omega$.
Then Lemma \ref{lem:globalization} implies 
\begin{align*}
    h(\xi \theta)^2 = 1 + \kappa \xi^2.
\end{align*}
It follows that
\begin{align*}
    \tau_{f_h,0}(\theta) = \lim_{t \nearrow \tau_0(\theta)} \int_0^t h(\xi \theta)^{-2} \,\d \xi = \lim_{t \nearrow \tau_0(\theta)} \int_0^t \frac{1}{1 + \kappa \xi^2}\,\d \xi 
    \leq \frac{\pi}{2\sqrt{\kappa}} = \tau_\kappa.
\end{align*}
Together with \eqref{eq:lem:domain-eq-1},
we have $\tau_0(\theta) = +\infty$ for almost every $\theta$.

Next,
we consider the case $\kappa < 0$.
By \eqref{eq:lem:domain-eq-1} and $\tau_\kappa = +\infty$,
we have $\tau_{f_h,0}(\theta) = +\infty$ for almost every $\theta$.
On the other hand,
it follows from Lemma \ref{lem:globalization} that 
\begin{align*}
    \tau_{f_h,0}(\theta) = \lim_{t \nearrow \tau_0(\theta)} \int_0^t \frac{1}{1 + \kappa \xi^2} \,\d \xi = \lim_{t \nearrow \tau_0(\theta)} \left[ \frac{1}{\sqrt{-\kappa}} \mathrm{arctanh}\left( \sqrt{-\kappa}\,\xi \right)\right]_0^t.
\end{align*}
Hence,
we have $\tau_0(\theta) = \frac{1}{\sqrt{-\kappa}}$ for almost every $\theta$.

Lastly,
we consider the case $\kappa = 0$.
We show $\nabla h(0) = 0$ by contradiction.
We assume $\nabla h(0) \neq 0$.
We set $\Theta := \{ \theta \in \mathbb{S}^{n-1} \ |\ \langle \nabla h(0),\theta \rangle > 0\}$.
For $\theta \in \Theta$,
it follows from Lemma \ref{lem:globalization} that 
\begin{align}\label{eq:lem:domain-eq-4}
    \tau_{f_h,0}(\theta) = \lim_{t \nearrow \tau_0(\theta)} \int_0^t h(\xi \theta)^{-2} \,\d \xi = \lim_{t \nearrow \tau_0(\theta)} \int_0^t \frac{1}{\left( 1 + \xi \langle \nabla h(0), \theta\rangle \right)^2}\,\d \xi \leq \frac{1}{\langle \nabla h(0), \theta \rangle} < +\infty.
\end{align}
Since $\Theta$ has positive measure,
\eqref{eq:lem:domain-eq-4} contradicts \eqref{eq:lem:domain-eq-1}.
Hence,
we have $\nabla h(0) = 0$.
This implies $h \equiv 1$.
Then we have $\tau_0(\theta) = \tau_{f_h,0}(\theta)$ for $\theta \in \mathbb{S}^{n-1}$.
Therefore,
it follows from \eqref{eq:lem:domain-eq-1} that $\tau_0(\theta) = +\infty$ for almost every $\theta$.

Combining these preceding arguments,
we obtain 
\begin{align*}
    \tau_0(\theta) = \begin{cases}
        +\infty &\text{ when }\kappa \geq 0,\\
        \frac{1}{\sqrt{-\kappa}} &\text{ when }\kappa < 0.
    \end{cases}
\end{align*}
for almost every $\theta$.
Since $\Omega$ is convex and contains the origin,
we see $\Omega = \Omega(\kappa)$.
We conclude the proof.
\end{proof}
We are now in a position to give a proof of the theorem.
\begin{proof}[Proof of Theorem \ref{thm:rigidity-volume-comparison}]
We set 
\begin{align*}
    c := h(0),\quad \widetilde{h} := c^{-1}h,\quad \widetilde{f}_h := (n-1) \log \widetilde{h},\quad \widetilde{\kappa} := c^{-4}{\kappa}.
\end{align*}
We see 
\begin{align*}
    \Ric_{\delta, \widetilde{f}_h}^1 = \Ric_{\delta, f_h}^1 \geq (n-1) \kappa \,\e^{-\frac{4 f_h}{n-1}}\delta = (n-1) \widetilde{\kappa}\,\e^{-\frac{4\widetilde{f}_h}{n-1}}\delta.
\end{align*}
We also have 
\begin{align*}
    s_{\widetilde{f}_h, 0, \theta}(t) = c^2  s_{f_h,0,\theta}(t),\quad C_{\delta, \widetilde{f}_h, r}(0) = C_{\delta, f_h, c^{-2}r}(0), \quad \mathfrak{m}_{\frac{n+1}{n-1}\widetilde{f}_h, \delta} = c^{n+1} \mathfrak{m}_{\frac{n+1}{n-1}f_h,\delta}.
\end{align*}
For $0 < r < R < \tau_{\widetilde{\kappa}} = c^2 \tau_\kappa$,
this implies
\begin{align}\label{eq:thm:vol-rig-1}
    \frac{\mathfrak{m}_{\frac{n+1}{n-1}\widetilde{f}_h, \delta} \left( C_{\delta, \widetilde{f}_h, R}(0) \right)}{\mathfrak{m}_{\frac{n+1}{n-1}\widetilde{f}_h, \delta} \left( C_{\delta, \widetilde{f}_h, r}(0) \right)} = \frac{\mathfrak{m}_{\frac{n+1}{n-1}f_h,\delta} \left( C_{\delta, f_h, c^{-2}R}(0) \right)}{\mathfrak{m}_{\frac{n+1}{n-1}f_h,\delta} \left( C_{\delta, f_h, c^{-2}r}(0) \right)}.
\end{align}
Since $\mathrm{sn}_{\widetilde{\kappa}}\left( r \right) = c^2 \mathrm{sn}_{\kappa}\left( c^{-2}r \right)$,
we have $v(n,\widetilde{\kappa},r) = c^{2n} v(n,\kappa,c^{-2}r)$.
This implies 
\begin{align*}
    \frac{v(n,\widetilde{\kappa},R)}{v(n,\widetilde{\kappa},r)} = \frac{v(n,\kappa,c^{-2}R)}{v(n,\kappa,c^{-2}r)}.
\end{align*}
Combining this with \eqref{eq:thm:vol-rig-1} and the assumption \eqref{eq:thm:rigidity-volume-comparison-1},
we have 
\begin{align*}
    \frac{\mathfrak{m}_{\frac{n+1}{n-1}\widetilde{f}_h, \delta} \left( C_{\delta, \widetilde{f}_h, R}(0) \right)}{\mathfrak{m}_{\frac{n+1}{n-1}\widetilde{f}_h, \delta} \left( C_{\delta, \widetilde{f}_h, r}(0) \right)} = \frac{v(n,\widetilde{\kappa},R)}{v(n,\widetilde{\kappa},r)}.
\end{align*}
Hence,
Lemma \ref{lem:domain} applied to $\widetilde{h}$ gives $\Omega = \Omega(\widetilde{\kappa})$ and $\widetilde{h}(x) = \sqrt{1 + \widetilde{\kappa}|x|^2}$.
\textcolor{black}{Since $(\Omega, \delta)$ is isometric to $(F_h(\Omega), \overline{g}_h)$,}
we conclude the proof.
\end{proof}
\begin{remark}
We see that $F_h(\Omega(\widetilde{\kappa}))$ is a hyperquadric.
We set 
\begin{align*}
    M_{\kappa,c} := \left\{ (y_0,y) \in \mathbb{R} \times \mathbb{R}^n\ \bigg| \ y_0 > 0,\quad c^2 y_0^2 + \frac{\kappa}{c^2}|y|^2 = 1 \right\}.
\end{align*}
When $\kappa > 0$,
this is the upper half of a rotationally symmetric ellipsoid,
and when $\kappa < 0$,
this is the upper sheet of a rotationally symmetric two-sheeted hyperboloid.
Then we have $F_h(\Omega(\widetilde{\kappa})) = M_{\kappa,c}$.
Indeed,
for $(y_0,y) = F_h(x)$,
we have 
\begin{align*}
    c^2 y_0^2 + \frac{\kappa}{c^2} |y|^2 = \frac{c^2 + c^{-2}\kappa |x|^2}{h(x)^2} = 1.
\end{align*}
This implies $F_h(\Omega(\widetilde{\kappa})) \subset M_{\kappa,c}$.
On the other hand,
for $(y_0,y) \in M_{\kappa,c}$,
we set $x := y_0^{-1}y$.
Then we have $x \in \Omega(\widetilde{\kappa})$ and $F_h(x) = (y_0,y)$.
This implies $M_{\kappa,c} \subset F_h(\Omega(\widetilde{\kappa}))$.
\end{remark}
\subsection{Examples}\label{subsec:examples}
In this subsection,
we consider hyperquadrics that clarify the role of two conditions in Theorem \ref{thm:rigidity-volume-comparison}.
Although we showed in Theorem \ref{thm:rigidity-volume-comparison} that the curvature bound and equality in the volume comparison imply that the radial graph is a hyperquadric,
we observe the converse in the following proposition.
Indeed,
the hyperquadrics in Theorem \ref{thm:rigidity-volume-comparison} provide the equality cases for the volume comparison under the Wylie--Yeroshkin type curvature bound in \eqref{eq:WY-bound} as follows:
\begin{proposition}\label{prop:quadrics}
Let $c > 0$ and $\kappa \in \mathbb{R}$.
For $x \in \Omega(c^{-4}\kappa)$,
we set 
\begin{align*}
    h(x) := c\sqrt{1 + \frac{\kappa}{c^4}|x|^2}.
\end{align*}
Then \eqref{eq:thm:rigidity-volume-comparison-0} and \eqref{eq:thm:rigidity-volume-comparison-1} hold.
\end{proposition}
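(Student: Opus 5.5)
Throughout we work on $(\Omega,\delta)$ with $\Omega=\Omega(\widetilde{\kappa})$, $\widetilde{\kappa}:=c^{-4}\kappa$. Since $F_h$ is an isometry from $(\Omega,\delta)$ onto $(F_h(\Omega),\overline{g}_h)$ carrying $f_h$ to $\overline{f}_h$, both conditions may be checked on $(\Omega,\delta,f_h)$. As in the proof of Theorem \ref{thm:rigidity-volume-comparison}, I first reduce to $c=1$. Put $\widetilde{h}:=c^{-1}h$. The scaling identities already recorded there give $\Ric^1_{\delta,\widetilde f_h}=\Ric^1_{\delta,f_h}$, $s_{\widetilde f_h,0,\theta}=c^2 s_{f_h,0,\theta}$ and $C_{\delta,\widetilde f_h,r}(0)=C_{\delta,f_h,c^{-2}r}(0)$. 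The weighted measures differ by the constant factor $c^{n+1}$, and $v(n,\widetilde\kappa,r)=c^{2n}v(n,\kappa,c^{-2}r)$. Hence both conditions are equivalent to the corresponding conditions for $\widetilde h(x)=\sqrt{1+\widetilde\kappa|x|^2}$ with curvature constant $\widetilde\kappa$. So assume $c=1$ and $h=\sqrt{1+\kappa|x|^2}$ on $\Omega(\kappa)$.

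\emph{Curvature bound.} By \eqref{eq:Ricci-Hess-relation}, $\Ric^1_{\delta,f_h}=(n-1)h^{-1}\Hess_\delta h$, and $\e^{-4f_h/(n-1)}=h^{-4}$. So \eqref{eq:thm:rigidity-volume-comparison-0} is equivalent to $\Hess_\delta h\ge \kappa h^{-3}\delta$. A direct computation gives
\[
\Hess_\delta h=\frac{\kappa}{h}\delta-\frac{\kappa^2}{h^3}\,x\otimes x .
\]
Hence
\[
\Hess_\delta h-\kappa h^{-3}\delta=\frac{\kappa}{h^{3}}\bigl(\kappa|x|^2\delta-\kappa\, x\otimes x\bigr)=\frac{\kappa^2}{h^3}\bigl(|x|^2\delta-x\otimes x\bigr),
\]
which is positive semidefinite by Cauchy--Schwarz. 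This proves \eqref{eq:thm:rigidity-volume-comparison-0} for every sign of $\kappa$.

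\emph{Volume equality.} Rays from $0$ are minimizing $\delta$-geodesics in the convex set $\Omega(\kappa)$, so $\tau_0(\theta)=\rho_0(\theta)$, which is $+\infty$ if $\kappa\ge0$ and $1/\sqrt{-\kappa}$ if $\kappa<0$. The reparametrization is
\[
s(t)=\int_0^t\frac{\d\xi}{1+\kappa\xi^2},
\]
which equals $\kappa^{-1/2}\arctan(\sqrt\kappa\, t)$, $t$, or $(-\kappa)^{-1/2}\mathrm{arctanh}(\sqrt{-\kappa}\,t)$ according to the sign of $\kappa$. Therefore $\tau_{f_h,0}(\theta)=\tau_\kappa$ in all three cases, independently of $\theta$. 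Inverting gives $t=\mathrm{sn}_\kappa(s)/\mathrm{sn}_\kappa'(s)$, i.e.\ $t=\tan$, identity, or $\tanh$ after scaling. By \eqref{eq:lem:quadrics-eq-1},
\[
\widehat J_{f_h,0}(s,\theta)=\Bigl(\frac{t}{h(t\theta)}\Bigr)^{n-1}.
\]
Using $1+\kappa t^2=1/\mathrm{sn}_\kappa'(s)^2$, we get $t/h=\mathrm{sn}_\kappa(s)$, so $\widehat J_{f_h,0}(s,\theta)=\mathrm{sn}_\kappa^{n-1}(s)$ for $0<s<\tau_\kappa$.

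By the polar formula from the proof of Corollary \ref{cor:C-BG}, for $r<\tau_\kappa$ we have
\[
\mathfrak m_{\frac{n+1}{n-1}f_h,\delta}\bigl(C_{\delta,f_h,r}(0)\bigr)=\int_{\mathbb S^{n-1}}\int_0^r\mathrm{sn}_\kappa^{n-1}(s)\,\d s\,\d\theta=v(n,\kappa,r).
\]
Here one must check that $\overline{\mathrm{sn}}_\kappa=\mathrm{sn}_\kappa$ on $(0,\tau_\kappa)$, which holds since $\tau_\kappa\le C_\kappa$. Taking ratios for $0<r<R<\tau_\kappa$ yields \eqref{eq:thm:rigidity-volume-comparison-1}.

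The main obstacle is bookkeeping rather than substance: verifying the polar-coordinate identity $\widehat J=e^{-f_h}t^{n-1}$ including the factor $\e^{-\frac{2}{n-1}f_h}\d t=\d s$ (so that the measure $\e^{-\frac{n+1}{n-1}f_h}$ exactly produces $\e^{-f_h}J\,\d s$), and checking the inversion $t\leftrightarrow s$ uniformly across the three cases.
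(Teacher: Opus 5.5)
Your proposal is correct and follows essentially the same route as the paper. You prove the curvature bound via $\Hess_\delta h-\kappa h^{-3}\delta=\kappa^2h^{-3}(|x|^2\delta-x\otimes x)\ge 0$, and the volume equality via $t/h(t\theta)=\mathrm{sn}_\kappa(s)$ together with $\tau_{f_h,0}(\theta)=\tau_\kappa$, with general $c$ handled by the scaling argument from the proof of Theorem \ref{thm:rigidity-volume-comparison}. The only differences are cosmetic: you do the scaling reduction first, and you treat the three signs of $\kappa$ uniformly via $t=\mathrm{sn}_\kappa(s)/\mathrm{sn}_\kappa'(s)$ and $1+\kappa t^2=\mathrm{sn}_\kappa'(s)^{-2}$, whereas the paper splits into cases.
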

\begin{proof}
It is enough to consider only $(\Omega(c^{-4}\kappa),\delta)$.
We use the same notation as in Theorem \ref{thm:rigidity-volume-comparison}.
First,
we only consider the case $c = 1$. 
By direct calculations,
we have 
\begin{align*}
    \Hess_\delta h = \frac{\kappa}{h}\delta - \frac{\kappa^2}{h^3}x \otimes x.
\end{align*}
For $\theta \in \mathbb{S}^{n-1}$,
it follows that
\begin{align}\label{eq:prop:quadrics-4}
    \frac{1}{n-1} \Ric_{\delta, f_h}^1 (\theta,\theta) &= \frac{\Hess_\delta h(\theta,\theta)}{h}
     = \frac{\kappa}{h^2} - \frac{\kappa^2 \langle x , \theta\rangle^2}{h^4} 
    = \frac{\kappa + \kappa^2 (|x|^2 - \langle x, \theta\rangle^2)}{h^4}
    \geq \frac{\kappa}{h^4},
\end{align}
where we used \eqref{eq:Ricci-Hess-relation} and $h(x)^2 = 1 + \kappa |x|^2$.
This implies \eqref{eq:thm:rigidity-volume-comparison-0}.

We now move on to the proof of \eqref{eq:thm:rigidity-volume-comparison-1}.
For $x \in \Omega(\kappa)$ with $x \neq 0$,
we write $x = t \theta$ with $t > 0$ and $\theta \in \mathbb{S}^{n-1}$.
We have 
\begin{align}\label{eq:prop:quadrics-5}
    s_{f_h,0,\theta}(t) = \int_0^t \frac{1}{1 + \kappa \xi^2}\,\d \xi = \begin{cases}
        \frac{1}{\sqrt{\kappa}} \mathrm{arctan}\left( \sqrt{\kappa}\,t \right) &\text{ for } \kappa > 0,\\
        t &\text{ for } \kappa = 0,\\
        \frac{1}{\sqrt{-\kappa}} \mathrm{arctanh}\left( \sqrt{-\kappa}\,t \right)&\text{ for } \kappa < 0 .
    \end{cases}
\end{align}
We next show 
\begin{align}\label{eq:prop:quadrics-9}
    \widehat{J}_{f_h,0}(s,\theta) = \mathrm{sn}_\kappa^{n-1}(s)
\end{align}
for $ 0< s < \tau_{f_h,0}(\theta)$.
We set $ t := t_{f_h,0,\theta}(s)$.
Since we have the identity \eqref{eq:lem:quadrics-eq-1},
it is enough to show 
\begin{align}\label{eq:thm:rigidity-volume-comparison-00}
    \frac{t}{h(t\theta)} = \mathrm{sn}_\kappa(s).
\end{align}
We divide the proof into three cases depending on the sign of $\kappa$.
If $\kappa > 0$,
we have $\tan(\sqrt{\kappa}\,s) = \sqrt{\kappa}\,t$,
which implies 
\begin{align*}
    1 + \kappa t^2 = 1 + \tan (\sqrt{\kappa}\,s)^2 = \frac{1}{\cos (\sqrt{\kappa}\,s)^2}.
\end{align*}
Hence,
it follows that
\begin{align*}
    \frac{t}{h(t\theta)} = \frac{t}{\sqrt{1 + \kappa \,t^2}} = \frac{1}{\sqrt{\kappa}}\sin(\sqrt{\kappa}\,s).
\end{align*}
If $\kappa = 0$,
since $h \equiv 1$,
we have $s = t$. 
If $\kappa < 0$,
denoting $\lambda := \sqrt{-\kappa}$,
we have $\lambda \,t = \tanh(\lambda\,s)$.
Hence,
we see
\begin{align*}
    1 - \lambda^2 t^2 = 1 - \tanh (\lambda\,s)^2 = \frac{1}{\cosh(\lambda\,s)^2}.
\end{align*}
It follows that
\begin{align}\label{eq:thm:rigidity-volume-comparison-03}
    \frac{t}{h(t\theta)} = \frac{t}{\sqrt{1 - \lambda^2 t^2}} = \frac{1}{\lambda} \sinh(\lambda\,s).
\end{align}
Therefore,
we have \eqref{eq:thm:rigidity-volume-comparison-00},
which yields \eqref{eq:prop:quadrics-9}.
Since it follows from \eqref{eq:prop:quadrics-5} that we have $\tau_{f_h,0}(\theta) = \tau_\kappa$,
for every $\theta \in \mathbb{S}^{n-1}$,
the identity \eqref{eq:prop:quadrics-9} holds for all $0 < s < \tau_\kappa$.
Consequently,
for $0 < r < \tau_\kappa$,
we obtain
\begin{align}\label{eq:thm:rigidity-volume-comparison-02}
    \mathfrak{m}_{\frac{n+1}{n-1}f_h,\delta} \left( C_{\delta, f_h, r}(0) \right) = \int_{\mathbb{S}^{n-1}} \int_0^r \mathrm{sn}_\kappa^{n-1}(s)\,\d s \,\d \theta = \omega_{n-1} \int_0^r \mathrm{sn}_\kappa^{n-1}(s) \,\d s = v(n,\kappa,r).
\end{align}
Hence,
we arrive at the desired assertion for the case $c = 1$.
The case $c\neq 1$ follows by applying the same scaling argument as in the proof of Theorem \ref{thm:rigidity-volume-comparison}.
We conclude the proof.
\end{proof}
We now observe an example that satisfies the curvature bound but does not attain equality in the volume comparison theorem.
Indeed,
we have the following assertion:
\begin{proposition}\label{prop-radial-graph-defect}
For $a > 1$,
we set 
\begin{align*}
    A_a := \mathrm{diag}(a, a^{-1}, 1, \cdots, 1),\quad \Omega_a := \{x \in \mathbb{R}^n\ |\ \langle A_a x, x \rangle < 1\},\quad h_a(x) := \sqrt{1- \langle A_a x, x \rangle }.
\end{align*}
Also,
let 
\begin{align*}
    f_a := (n-1) \log h_a,\quad \overline{f}_a := f_a \circ F_{h_a}^{-1},\quad \overline{g}_a := (F_{h_a})_* \delta.
\end{align*}
Denoting $\kappa_a := -a$,
we have 
\begin{align}\label{eq:prop-radial-graph-defect-1}
    \Ric_{\overline{g}_a, \overline{f}_a}^1 \geq (n-1)\kappa_a \e^{-\frac{4\overline{f}_a}{n-1}}\overline{g}_a.
\end{align}
Furthermore,
for $0 < r < R$,
we have 
\begin{align}\label{eq:prop-radial-graph-defect-2}
    \frac{\mathfrak{m}_{\frac{n+1}{n-1}\overline{f}_a, \overline{g}_a} \left( C_{\overline{g}_a, \overline{f}_a, R} \left( F_{h_a}(0) \right) \right)}{\mathfrak{m}_{\frac{n+1}{n-1}\overline{f}_a, \overline{g}_a} \left( C_{\overline{g}_a, \overline{f}_a, r} \left( F_{h_a}(0) \right) \right)} < \frac{v(n,\kappa_a,R)}{v(n,\kappa_a,r)}.
\end{align}
\end{proposition}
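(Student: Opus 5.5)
We verify \eqref{eq:prop-radial-graph-defect-1} through the Hessian identity \eqref{eq:Ricci-Hess-relation}. We then prove \eqref{eq:prop-radial-graph-defect-2} by computing the Jacobian explicitly along each ray from $0$, rather than by a contradiction argument based on Theorem \ref{thm:rigidity-volume-comparison}. The rigidity theorem assumes equality for \emph{every} pair $r<R$, whereas we need strictness for each individual pair. As in the proof of Theorem \ref{thm:rigidity-volume-comparison}, it suffices to work on $(\Omega_a,\delta)$ with $h_a(0)=1$.

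\textbf{Curvature bound.} By \eqref{eq:Ricci-Hess-relation}, \eqref{eq:prop-radial-graph-defect-1} is equivalent to $h_a^3\,\Hess_\delta h_a \geq -a\,\delta$. Differentiating $h_a^2 = 1-\langle A_a x,x\rangle$ twice gives $h_a\Hess_\delta h_a = -A_a - \d h_a\otimes \d h_a$, and $\d h_a = -h_a^{-1}A_a x$. Hence
\begin{align*}
 -h_a^3\Hess_\delta h_a = h_a^2 A_a + A_a x\otimes A_a x .
\end{align*}
Set $B:=A_a^{1/2}$ and $y:=Bx$, so that $|y|<1$. The right-hand side equals $B\big((1-|y|^2)I + y\otimes y\big)B$. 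The inner matrix has eigenvalue $1$ in the direction $y$ and $1-|y|^2$ on $y^\perp$, so it is at most $I$. Therefore the whole expression is at most $B^2=A_a\le aI$, because $a>1>a^{-1}$. This gives \eqref{eq:prop-radial-graph-defect-1}.

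\textbf{Jacobian along rays.} Fix $\theta\in\mathbb{S}^{n-1}$ and set $q(\theta):=\langle A_a\theta,\theta\rangle\in[a^{-1},a]$. Then $h_a(t\theta)^2 = 1-q t^2$, and $\tau_0(\theta)=q^{-1/2}$. Repeating the $\kappa<0$ computation in the proof of Proposition \ref{prop:quadrics} with $\kappa$ replaced by $-q$ gives $s_{f_a,0,\theta}(t) = q^{-1/2}\mathrm{arctanh}(\sqrt q\,t)$, so $\tau_{f_a,0}(\theta)=+\infty$. Combined with \eqref{eq:lem:quadrics-eq-1}, it also gives
\begin{align*}
 \widehat J_{f_a,0}(s,\theta) = \mathrm{sn}_{-q(\theta)}^{n-1}(s),\qquad
 \mathfrak{m}_{\frac{n+1}{n-1}f_a,\delta}\big(C_{\delta,f_a,r}(0)\big) = \int_{\mathbb{S}^{n-1}}\int_0^r \mathrm{sn}_{-q(\theta)}^{n-1}(s)\,\d s\,\d\theta .
\end{align*}

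\textbf{Strict inequality.} Write $\mathrm{sn}_{-q}^{n-1}(s) = \phi_\theta(s)\,\mathrm{sn}_{\kappa_a}^{n-1}(s)$, where
\begin{align*}
 \phi_\theta(s) := \left(\frac{\sqrt a\,\sinh(\sqrt q\, s)}{\sqrt q\,\sinh(\sqrt a\, s)}\right)^{n-1}.
\end{align*}
For $b<c$, the logarithmic derivative of $\sinh(bs)/\sinh(cs)$ is $b\coth(bs)-c\coth(cs)<0$, because $u\mapsto u\coth u$ is strictly increasing on $(0,\infty)$. Hence $\phi_\theta$ is strictly decreasing whenever $q(\theta)<a$, which holds for all $\theta\neq\pm e_1$. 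Set $\Phi(s):=\int_{\mathbb{S}^{n-1}}\phi_\theta(s)\,\d\theta$ and $\psi:=\mathrm{sn}_{\kappa_a}^{n-1}>0$. Then $\Phi$ is strictly decreasing, and the left-hand side of \eqref{eq:prop-radial-graph-defect-2} equals
\begin{align*}
 \frac{\int_0^R\Phi\psi\,\d s}{\int_0^r\Phi\psi\,\d s},
\end{align*}
while the right-hand side equals $\int_0^R\psi\,\d s\big/\int_0^r\psi\,\d s$.

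The final step is the standard Chebyshev-type estimate. Since $\Phi(s)<\Phi(r)$ for $s>r$, we get
\begin{align*}
 \int_r^R\Phi\psi\,\d s < \Phi(r)\int_r^R\psi\,\d s .
\end{align*}
Since $\Phi(s)>\Phi(r)$ for $s<r$, we get
\begin{align*}
 \int_0^r\Phi\psi\,\d s > \Phi(r)\int_0^r\psi\,\d s .
\end{align*}
Dividing the first by the second yields
\begin{align*}
 \frac{\int_r^R\Phi\psi\,\d s}{\int_0^r\Phi\psi\,\d s}<\frac{\int_r^R\psi\,\d s}{\int_0^r\psi\,\d s},
\end{align*}
and adding $1$ to both sides gives \eqref{eq:prop-radial-graph-defect-2}.

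The main obstacle is the strictness for an arbitrary single pair $r<R$. This is why we use explicit monotonicity of $\phi_\theta$ on the positive-measure set of directions $\theta\neq\pm e_1$, instead of the rigidity theorem.
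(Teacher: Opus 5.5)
Your proof is correct and follows the paper's route. You obtain the curvature bound via \eqref{eq:Ricci-Hess-relation}; your congruence bound $(1-|y|^2)I + y\otimes y \le I$ is the matrix form of the paper's Cauchy--Schwarz step for $\langle A_a\cdot,\cdot\rangle$. You then use the same ray-wise Jacobian $\widehat J_{f_a,0}(s,\theta)=\mathrm{sn}_{-\lambda(\theta)}^{n-1}(s)$ with $\tau_{f_a,0}(\theta)=+\infty$. The only difference is that you spell out the final step, namely the strict decrease of $\mathrm{sn}_{-\lambda(\theta)}/\mathrm{sn}_{-a}$ for $\lambda(\theta)<a$ and the Chebyshev-type ratio comparison, which the paper compresses into the remark that $\lambda(\theta)<a$ for almost every $\theta$.
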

\begin{proof}
It is enough to consider only $(\Omega_a, \delta)$.
For every $\theta \in \mathbb{S}^{n-1}$,
the same calculation as in \eqref{eq:prop:quadrics-4} yields
\begin{align}\label{eq:prop-radial-graph-defect-3}
    \frac{1}{n-1} \Ric_{\delta,f_a}^1  (\theta,\theta)= \frac{\Hess_\delta h_a(\theta,\theta)}{h_a} = - \frac{h_a^2\langle A_a \theta,\theta \rangle + \langle A_a x, \theta \rangle^2}{h_a^4}.
\end{align}
From the Cauchy--Schwarz inequality for $\langle A_a \cdot,\cdot\rangle$,
it follows that
\begin{align*}
    h_a^2\langle A_a \theta, \theta \rangle + \langle A_a x, \theta \rangle^2 \leq h_a^2 \langle A_a \theta,\theta \rangle + \langle A_a x,x \rangle \langle A_a \theta,\theta\rangle = \langle A_a \theta,\theta \rangle \leq a.
\end{align*}
Together with \eqref{eq:prop-radial-graph-defect-3},
we arrive at \eqref{eq:prop-radial-graph-defect-1}.

Next,
we show \eqref{eq:prop-radial-graph-defect-2}.
We set $\lambda(\theta) := \langle A_a \theta, \theta\rangle$.
By the same argument as in Proposition \ref{prop:quadrics},
we have 
\begin{align}\label{eq:prop-radial-graph-defect-5}
    s_{f_a,0,\theta}(t) 
    = \frac{1}{\sqrt{\lambda(\theta)}} \mathrm{arctanh}\left( \sqrt{\lambda(\theta)} \,t\right),\quad 
    \frac{t}{h_a (t\theta)} 
    = \mathrm{sn}_{-\lambda(\theta)}(s).
\end{align}
Together with the identity \eqref{eq:lem:quadrics-eq-1},
we see 
\begin{align}\label{eq:prop-radial-graph-defect-7}
    \widehat{J}_{f_a,0}(s,\theta) = \mathrm{sn}_{-\lambda(\theta)}^{n-1}(s)
\end{align}
for $0 < s < \tau_{f_a,0}(\theta)$.
Since \eqref{eq:prop-radial-graph-defect-5} implies $\tau_{f_a,0}(\theta) = +\infty$,
the identity \eqref{eq:prop-radial-graph-defect-7} holds for all $s > 0$.
Therefore,
for $r > 0$,
we obtain
\begin{align}\label{eq:prop-radial-graph-defect-6}
    \mathfrak{m}_{\frac{n+1}{n-1}f_a,\delta} \left( C_{\delta, f_a, r}(0) \right) = \int_{\mathbb{S}^{n-1}} \int_0^r \mathrm{sn}_{-\lambda(\theta)}^{n-1}(s)\,\d s \,\d \theta.
\end{align}
Since we have $\lambda(\theta) < a$ for almost every $\theta$,
we arrive at \eqref{eq:prop-radial-graph-defect-2}.
\end{proof}
\begin{remark}
The function $h_a$ is a solution of \eqref{eq:CY-boundary-problem}.
Indeed,
we have 
\begin{align*}
    -\Hess_\delta h_a = \frac{A_a}{h_a} + \frac{A_a x\otimes A_a x}{h_a^3} > 0.
\end{align*}
Since $h_a > 0$ on $\Omega_a$ and $h_a = 0$ on $\partial \Omega_a$,
it remains to prove the Monge--Amp\`{e}re equation.
Using $\det A_a = 1$ and the matrix determinant lemma,
it follows that
\begin{align*}
    \det(-\Hess_\delta h_a) = h_a^{-n} \det(A_a) \det \left( I + \frac{x \otimes A_a x}{h_a^2}\right) = h_a^{-n}\left( 1 + \frac{\langle A_a x,x \rangle}{h_a^2} \right) = h_a^{-(n+2)}.
\end{align*}
\end{remark}
\begin{remark}\label{rem:Calabi-curvature-bound}
We consider the Calabi example using the notation in Remark \ref{rem:Calabi-example-CY-2}.
For the Calabi example,
no Wylie--Yeroshkin type curvature bound \eqref{eq:thm:rigidity-volume-comparison-0} exists.
Indeed,
there is no constant $\kappa \in \mathbb{R}$ such that 
\begin{align}\label{eq:prop:Calabi-1}
    \Ric_{\overline{g}_{\mathcal{O}_n}, \overline{f}_{\mathcal{O}_n}}^1 \geq (n-1) \kappa\, \e^{-\frac{4\overline{f}_{\mathcal{O}_n}}{n-1}}\overline{g}_{\mathcal{O}_n},
\end{align}
where we set 
\begin{align*}
    f_{\mathcal{O}_n} := (n-1) \log h_{\mathcal{O}_n},\quad \overline{f}_{\mathcal{O}_n} := f_{\mathcal{O}_n} \circ F_{\mathcal{O}_n}^{-1}, \quad \overline{g}_{\mathcal{O}_n} := (F_{\mathcal{O}_n})_* \delta.
\end{align*}
Below,
we give a proof by contradiction.
We assume that there exists $\kappa \in \mathbb{R}$ satisfying \eqref{eq:prop:Calabi-1}.
It is enough to work on $(\mathcal{O}_n, \delta)$.
Together with \eqref{eq:Ricci-Hess-relation},
it follows that \eqref{eq:prop:Calabi-1} is equivalent to 
\begin{align*}
    \Hess_\delta h_{\mathcal{O}_n} \geq \kappa h_{\mathcal{O}_n}^{-3} \delta.
\end{align*}
Let $x(t) := (t,1, \cdots, 1)$ for $t > 0$ and $\alpha := (n+1)^{-1}$.
Since 
\begin{align*}
    h_{\mathcal{O}_n}(x(t)) = \sqrt{n+1}\,t^\alpha, \quad \partial_{11} h_{\mathcal{O}_n}(x(t)) = \alpha (\alpha - 1)\sqrt{n+1}\,t^{\alpha-2},
\end{align*}
we obtain 
\begin{align*}
    h_{\mathcal{O}_n}(x(t))^3 \partial_{11} h_{\mathcal{O}_n}\left( x(t) \right) = -n t^{-\frac{2(n-1)}{n+1}}.
\end{align*}
The right-hand side goes to $-\infty$ when $t \searrow 0$.
On the other hand,
\eqref{eq:prop:Calabi-1} implies 
\begin{align*}
     h_{\mathcal{O}_n}(x(t))^3 \partial_{11} h_{\mathcal{O}_n}\left( x(t) \right)  \geq \kappa,
\end{align*}
which is a contradiction.
\end{remark}
\section{Convex bodies}\label{sec:Milman}
In this section,
we study the Cheng maximal diameter theorem in the setting of Milman \cite{milman2023centro}.
\subsection{Weighted structure on convex bodies}\label{subsec:Milman-WY}
In this subsection,
we introduce a weighted structure on convex bodies.
We use the notation introduced in Subsection \ref{preliminary:milman}.
Let $K \in \mathcal{K}_+^\infty$,
and let $\mathfrak{n} : \partial K \rightarrow \mathbb{S}^*$ be the Gauss map.
We introduce notation associated with the standard Riemannian metric $g_{\mathbb{S}^*}$ and $\mathfrak{n}$ as follows:
\begin{align*}
    g_{\partial K} := \mathfrak{n}^* g_{\mathbb{S}^*},\quad h_{\partial K} := h_K \circ \mathfrak{n},\quad f_K := (n-1)\log h_K,\quad f_{\partial K} := (n-1) \log h_{\partial K}.
\end{align*}
Furthermore,
we set
\begin{align*}
    \widehat{g}_K^{\mathbb{S}^*} := \e^{-\frac{4f_K}{n-1}}g_{\mathbb{S}^*},\quad \widehat{g}_{\partial K} := \e^{-\frac{4f_{\partial K}}{n-1}}g_{\partial K}. 
\end{align*}
We have the following assertion:
\begin{proposition}\label{prop:conn-identity}
For $K \in \mathcal{K}_+^\infty$,
we have 
\begin{align}\label{eq:prop:conn-identity-1}
    (\nabla_K^{\partial K})^* = \nabla^{g_{\partial K},\frac{f_{\partial K}}{n-1}},
\end{align}
where $(\nabla_K^{\partial K})^*$ is the $g_K^{\partial K}$-dual connection of $\nabla_K^{\partial K}$.
Furthermore,
we have
\begin{align}\label{eq:prop:conn-identity-2}
    \Ric_{g_{\partial K},f_{\partial K}}^1 = \Ric^{(\nabla_K^{\partial K})^*} = \Ric^{\nabla_K^{\partial K}} = (n-1) g_K^{\partial K}.
\end{align}
\end{proposition}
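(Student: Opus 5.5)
The plan is to transport everything to $\mathbb{S}^*$ through the Gauss map $\mathfrak{n} = T_K^{\partial K\rightarrow \mathbb{S}^*}$ and then use Proposition \ref{prop:milman-prop4-2}. By the remark following the definition of $g_K^M,\nabla_K^M$, we have $g_K^{\partial K} = \mathfrak{n}^* g_K^{\mathbb{S}^*}$ and $\nabla_K^{\partial K} = \mathfrak{n}^*\nabla_K^{\mathbb{S}^*}$. Since $\mathfrak{n}$ is a diffeomorphism, the $g_K^{\partial K}$-dual connection is the pullback of the $g_K^{\mathbb{S}^*}$-dual connection:
\[
(\nabla_K^{\partial K})^* = \mathfrak{n}^*(\nabla_K^{\mathbb{S}^*})^*.
\]
This follows directly from the defining identity $X(g(Y,Z)) = g(\nabla_XY,Z)+g(Y,\nabla^*_XZ)$, which is natural under diffeomorphisms.

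Next we invoke Proposition \ref{prop:milman-prop4-2}. It gives
\[
(\nabla_K^{\mathbb{S}^*})^*_XY = \nabla^{g_{\mathbb{S}^*}}_XY - (X\log h_K)Y-(Y\log h_K)X,
\]
which is exactly $\nabla^{g_{\mathbb{S}^*},\log h_K} = \nabla^{g_{\mathbb{S}^*},\frac{f_K}{n-1}}$ in the notation of \eqref{eq:WY-conn}. Pulling back by $\mathfrak{n}$ uses two facts. The map $\mathfrak{n}$ is an isometry $(\partial K,g_{\partial K})\to(\mathbb{S}^*,g_{\mathbb{S}^*})$ by the definition of $g_{\partial K}$, so it carries Levi-Civita connections to Levi-Civita connections. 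Also $\log h_K\circ\mathfrak{n} = \log h_{\partial K} = f_{\partial K}/(n-1)$. Together these give \eqref{eq:prop:conn-identity-1}.

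For \eqref{eq:prop:conn-identity-2}, the first equality is the Wylie--Yeroshkin identity \eqref{eq:WY-affine-ricci}, applied on $(\partial K, g_{\partial K})$ with $f = f_{\partial K}$ and $\varphi = f_{\partial K}/(n-1)$, combined with \eqref{eq:prop:conn-identity-1}. For the remaining equalities, Proposition \ref{prop:milman-constant} with $M=\partial K$ states $\Ric^{\nabla_K^{\partial K}} = \Ric^{(\nabla_K^{\partial K})^*} = (n-1)g_K^{\partial K}$. The Ricci tensor in that statement is the trace of $Z\mapsto R(Z,Y)\cdot$, which is independent of any metric, so the two notions of Ricci curvature agree.

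I do not expect a real obstacle. The only point needing care is bookkeeping: checking that the convention for $\Ric^D$ used here (trace over a $g$-orthonormal frame, independent of $g$) matches Milman's convention in Proposition \ref{prop:milman-constant}, and that the sign convention $g^\xi = -\mathfrak{g}^\xi$ is consistently the one under which Proposition \ref{prop:milman-prop4-2} gives the dual connection rather than $\nabla_K^{\mathbb{S}^*}$ itself.
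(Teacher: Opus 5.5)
Your proposal is correct and follows essentially the same route as the paper: identify the dual connection on $\mathbb{S}^*$ with $\nabla^{g_{\mathbb{S}^*},\frac{f_K}{n-1}}$ via Proposition \ref{prop:milman-prop4-2}, then combine the Wylie--Yeroshkin identity \eqref{eq:WY-affine-ricci}, the metric-independence of the Ricci trace, and Proposition \ref{prop:milman-constant}, transporting everything by the Gauss map $\mathfrak{n}$. The only difference is cosmetic: you pull back to $\partial K$ before applying the Ricci identities, using Proposition \ref{prop:milman-constant} directly with $M=\partial K$, whereas the paper proves all identities on $\mathbb{S}^*$ and pulls back at the end.
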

\begin{proof}
We first prove the identities on $\mathbb{S}^*$,
then pull them back by $\mathfrak{n}$.
For brevity,
we write $D := \nabla_K^{\mathbb{S}^*}$,
and let $D^*$ be its $g_{K}^{\mathbb{S}^*}$-dual connection.
By Proposition \ref{prop:milman-prop4-2},
we have
\begin{align*}
    D^* = \nabla^{g_{\mathbb{S}^*}, \frac{f_K}{n-1}}.
\end{align*}
Hence,
we have 
\begin{align*}
    \Ric_{g_{\mathbb{S}^*},f_K}^1 = \Ric_{g_{\mathbb{S}^*}}^{D^*}.
\end{align*}
Since the definition of the affine Ricci curvature does not depend on the metric,
we have 
\begin{align*}
    \Ric_{g_{\mathbb{S}^*}}^{D^*} = \Ric_{g_K^{\mathbb{S}^*}}^{D^*}.
\end{align*}
It follows from Proposition \ref{prop:milman-constant} that 
\begin{align*}
    \Ric_{g_K^{\mathbb{S}^*}}^{D^*} = \Ric_{g_K^{\mathbb{S}^*}}^{D} = (n-1)g_{K}^{\mathbb{S}^*}.
\end{align*}
Pulling these identities back by $\mathfrak{n}$ gives the desired results.
\end{proof}
\begin{remark}
For $g$-orthonormal vector fields $X, Y$,
Wylie \cite{wylie2015sectional} defined the weighted sectional curvature $\mathrm{Sec}_{g,f}^1$ as follows (see also Kennard--Wylie--Yeroshkin \cite{kennard2019weighted}):
\begin{align*}
    \mathrm{Sec}_{g,f}^1 (X,Y) := g\left( R^{\nabla^{g, \frac{f}{n-1}}}(Y,X)X,Y \right) = \mathrm{Sec}_g(X,Y) + \frac{1}{n-1}\Hess_g f(X,X) + \frac{(\d f(X))^2}{(n-1)^2},
\end{align*}
where $\mathrm{Sec}_g$ is the sectional curvature of $(M,g)$.
Together with Proposition \ref{prop:milman-prop4-2},
we have 
\begin{align*}
    g_K^{\mathbb{S}^*}(X,X) = \frac{D^2 h_K(X,X)}{h_K} = 1 + \Hess_{g_{\mathbb{S}^*}} \log h_K(X,X) + (X \log h_K)^2 = \mathrm{Sec}_{g_{\mathbb{S}^*},f_K}^1(X,Y)
\end{align*}
for $g_{\mathbb{S}^*}$-orthonormal vector fields $X, Y$.
\end{remark}
\begin{remark}
In Proposition \ref{prop:conn-identity},
the weighted manifold $(\partial K, g_{\partial K}, \mathfrak{m}_{f_{\partial K},g_{\partial K}})$ was considered.
This is the pullback of $(\mathbb{S}^*, g_{\mathbb{S}^*}, \mathfrak{m}_{f_K,g_{\mathbb{S}^*}})$ by $\mathfrak{n}$.
We note that this weighted manifold is different from the weighted manifold considered in Kolesnikov--Milman \cite{kolesnikov2022local} and Milman \cite{milman2023centro}.
\end{remark}
The relation between the setting in this section and the radial graphs is as follows:
\begin{proposition}\label{prop:milman-radial}
Let $K \in \mathcal{K}_+^\infty$.
For $p\in \partial K$,
there exist $A \in GL(\mathbb{R}^{n+1})$,
an open set $U$ in $\partial K$ with $p \in U$,
and a \textcolor{black}{domain} $\Omega$ in $\mathbb{R}^n$ and $0 < h\in C^\infty(\Omega)$ such that 
\begin{align*}
    A(U) = F_h(\Omega).
\end{align*}
Furthermore,
for $\Psi_A := (A|_U)^{-1}: F_h(\Omega) \rightarrow U$,
we have 
\begin{align*}
    \Psi_A^* \left( \nabla_K^{\partial K} \right) = \nabla^{\xi_h},\quad \Psi_A^*\left( g_K^{\partial K} \right) = g^{\xi_h},\quad \Psi_A^* \left( \Ric^{\nabla_K^{\partial K}} \right) = \Ric^{\nabla^{\xi_h}},
\end{align*}
where $\xi_h$ is defined as in Proposition \ref{prop:radial-graph}.
\end{proposition}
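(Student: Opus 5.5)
We construct $A$ by a linear change of coordinates that puts the point $p$ on the axis $\{(y_0,0)\}$, and then read off $U$ as a radial graph over a hyperplane. The identities for the connection, metric and Ricci curvature will then follow from Proposition \ref{prop:centro-affine-transformation}.

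\textbf{Step 1: choosing $A$.} Since $0$ lies in the interior of $K$ and $\partial K$ is strongly convex, the position vector $p$ is transversal to $T_p\partial K$. Choose $A\in GL(\mathbb{R}^{n+1})$ with
\begin{align*}
    A p = (1,0,\dots,0), \qquad A(T_p\partial K) = \{0\}\times\mathbb{R}^n .
\end{align*}
Concretely, take a basis $v_1,\dots,v_n$ of $T_p\partial K$, complete it to the basis $p,v_1,\dots,v_n$ of $\mathbb{R}^{n+1}$, and send this basis to the standard one.

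\textbf{Step 2: $A(\partial K)$ is a radial graph near $Ap$.} Consider the smooth map $\Phi(y_0,y):=y_0^{-1}y$, defined on $\{y_0>0\}$. Restricted to $A(\partial K)$, its differential at $Ap=(1,0)$ is the identity on $\{0\}\times\mathbb{R}^n = T_{Ap}A(\partial K)$. By the inverse function theorem, $\Phi$ is a diffeomorphism from a neighborhood $A(U)$ of $Ap$ in $A(\partial K)$ onto a domain $\Omega\ni 0$ in $\mathbb{R}^n$. We may shrink $U$ so that $y_0>0$ on $A(U)$ and $\Omega$ is connected. Define $h:=1/y_0\circ(\Phi|_{A(U)})^{-1}$, which is smooth and positive on $\Omega$. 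Each point of $A(U)$ equals $y_0(1,y_0^{-1}y)$, so it equals $F_h(x)$ with $x=\Phi(y_0,y)$. Hence $A(U)=F_h(\Omega)$, and $(\Phi|_{A(U)})^{-1}=F_h$.

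\textbf{Step 3: transporting the structure.} The position vector field on $A(U)$ is $A$ applied to the position vector field on $U$, so it is transversal. It coincides with $\xi_h$, the position vector field of $F_h(\Omega)$. Apply Proposition \ref{prop:centro-affine-transformation} with $x$ the inclusion of $U$ and $\widetilde{x}=A\circ x$. This gives that $\nabla^{\partial K}_K$ and $g^{\partial K}_K$ on $U$ equal the centro-affine connection and form induced by $A\circ x$. Identifying $U$ with $A(U)=F_h(\Omega)$ via $A|_U=\Psi_A^{-1}$, we obtain
\begin{align*}
    \Psi_A^*\nabla_K^{\partial K}=\nabla^{\xi_h}, \qquad \Psi_A^*g_K^{\partial K}=g^{\xi_h}.
\end{align*}
The Ricci identity follows because the Ricci tensor of an affine connection is built naturally from the connection alone, so it commutes with pullback by diffeomorphisms.

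\textbf{Main obstacle.} The only delicate point is Step 2: checking that $d\Phi$ is invertible on $T_{Ap}A(\partial K)$. This is exactly where the transversality of $p$ to $T_p\partial K$ enters, since it is what allows $A(T_p\partial K)=\{0\}\times\mathbb{R}^n$ together with $Ap=(1,0)$. At $Ap=(1,0)$ we have $d\Phi(w_0,w)=w-w_0\cdot 0=w$, so $d\Phi$ is the identity there, as required.
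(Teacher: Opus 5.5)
Your proof is correct and follows essentially the same route as the paper: the same choice of $A$ with $Ap=(1,0,\dots,0)$ and $A(T_p\partial K)=\{0\}\times\mathbb{R}^n$, the same inverse-function-theorem argument for $\Phi(y_0,y)=y/y_0$ with $h$ the reciprocal of the first coordinate, and then Proposition \ref{prop:centro-affine-transformation} for the connection and metric. You also spell out two points the paper leaves implicit: shrinking $U$ so that $\Omega$ is connected, and the fact that the Ricci tensor commutes with pullback by diffeomorphisms.
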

\begin{proof}
Since the origin is in the interior of $K$,
there exists $A \in GL(\mathbb{R}^{n+1})$ such that 
\begin{align*}
    A\,p = e_0 := (1,0,\cdots, 0) \in \mathbb{R}^{n+1},\quad A(T_p \partial K) = \{0\} \times \mathbb{R}^n.
\end{align*}
We denote $K_A := A(K)$.
On $\partial K_A \cap \left\{ (y_0,y)\ |\ y\in \mathbb{R}^n, y_0 > 0 \right\}$,
we define 
\begin{align*}
    \Phi((y_0,y)) := \frac{y}{y_0}.
\end{align*}
Then for $(w_0,w) \in T_{e_0} \partial K_A$ with $w_0\in\mathbb{R}$ and $w \in \mathbb{R}^n$,
we have $\d \Phi_{e_0}((w_0,w)) = w$.
Thus 
\begin{align*}
    \d \Phi_{e_0}\big|_{T_{e_0}\partial K_A} : T_{e_0} \partial K_A \rightarrow \mathbb{R}^n
\end{align*}
is an isomorphism.
By the inverse function theorem,
there exists an open set $U_A$ in $\partial K_A \cap \mathbb{R}_{>0} \times \mathbb{R}^n$ with $e_0 \in U_A$ such that $\Phi|_{U_A}$ is a diffeomorphism.
We denote 
\begin{align*}
    \Omega := \Phi (U_A),\quad F := (\Phi|_{U_A})^{-1} : \Omega \rightarrow U_A.
\end{align*}
Also,
we write $F(x) = (F_0(x),F_1(x))$ with $F_0(x) \in \mathbb{R}$ and $F_1(x) \in \mathbb{R}^n$,
and define $h(x) := F_0(x)^{-1}$.
Then we see $F(x) = F_h(x)$.
Consequently,
we have $U_A = F_h(\Omega)$.
Hence,
for $U := A^{-1}(U_A)$,
we have $A(U) = F_h(\Omega)$.
This  yields the first assertion.
The rest follows by applying Proposition \ref{prop:centro-affine-transformation}.
\end{proof}
\subsection{Cheng type maximal diameter theorem for convex bodies}
In this subsection,
we derive the Cheng type maximal diameter property for convex bodies,
and in the subsequent arguments,
we further calculate the diameter of ellipsoids clarifying the role of the assumptions of the Cheng type diameter rigidity.
Our Cheng type maximal diameter theorem is as follows:
\begin{theorem}\label{thm:cheng-milman}
Let $K \in \mathcal{K}_+^\infty$.
For $\kappa > 0$,
we assume 
\begin{align*}
    \Ric_{g_{\partial K}, f_{\partial K}}^1 \geq (n-1) \kappa \,\e^{-\frac{4f_{\partial K}}{n-1}}g_{\partial K}
\end{align*}
and there exists $x,y \in \partial K$ such that 
\begin{align*}
    d_{\widehat{g}_{\partial K}}(x,y) = \frac{\pi}{\sqrt{\kappa}}.
\end{align*}
Then $K$ is a rotationally symmetric ellipsoid.
In particular,
for $e := \mathfrak{n}(x)$ and $\theta \in \mathbb{S}^*$,
we have 
\begin{align}\label{eq:thm:cheng-milman-0}
    h_K (\theta)^2 = h_K(e)^2 \langle \theta, e \rangle^2 + \kappa \,h_K(e)^{-2} |P_{e^{\perp}} \theta|^2.
\end{align}
\end{theorem}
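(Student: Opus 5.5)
We transport everything to $\mathbb{S}^*$ via the Gauss map $\mathfrak{n}$, which is an isometry from $(\partial K, g_{\partial K})$ to $(\mathbb{S}^*, g_{\mathbb{S}^*})$ carrying $f_{\partial K}$ to $f_K$. The weighted manifold $(\mathbb{S}^*, g_{\mathbb{S}^*}, \mathfrak{m}_{f_K,g_{\mathbb{S}^*}})$ is compact, hence complete. It satisfies $\Ric^1_{g_{\mathbb{S}^*},f_K}\ge (n-1)\kappa\,\e^{-\frac{4f_K}{n-1}}g_{\mathbb{S}^*}$, and two of its points $e=\mathfrak{n}(x)$, $e'=\mathfrak{n}(y)$ satisfy $d_{\widehat g_K^{\mathbb{S}^*}}(e,e')=\pi/\sqrt{\kappa}$.

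\emph{Step 1: normalization.} Theorem \ref{thm:cheng} requires $f(p)=0$. Rather than rescaling, I would apply the centro-affine transformation $A=\mathrm{diag}$ (scaling by $c:=h_K(e)^{-1}$ in all directions, i.e. replacing $K$ by $cK$). Then $h_{cK}=c\,h_K$, so $f_{cK}(e)=0$. However, $\widehat g$ and the curvature bound transform with $\kappa\mapsto c^{-4}\kappa$, exactly as in the scaling in the proof of Theorem \ref{thm:rigidity-volume-comparison}. I would verify that the hypotheses pass over with $\widetilde\kappa=h_K(e)^{-4}\kappa$. Since $\Ric^1$ is unchanged when $f$ is shifted by a constant, it suffices to track the factor $\e^{-4f/(n-1)}$, which gets multiplied by $c^{-4}$. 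The target identity \eqref{eq:thm:cheng-milman-0} is compatible with this scaling: set $h_K(e)=1$ and $\kappa=\widetilde\kappa$, and it becomes $h^2=\langle\theta,e\rangle^2+\widetilde\kappa|P_{e^\perp}\theta|^2$.

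\emph{Step 2: apply Theorem \ref{thm:cheng}.} With $p=e$ and $f_K(e)=0$, we get that $f_K$ is radial about $e$ and that $g_{\mathbb{S}^*}$ has the warped form \eqref{eq:metric:WY-thm-2-6}. On the other hand, $g_{\mathbb{S}^*}$ in polar coordinates about $e$ is $\d t^2+\sin^2 t\, g_{\mathbb{S}^{n-1}}$. Comparing the two gives the ODE
\[
\sin t = h(t)\,\mathrm{sn}_\kappa(s(t)),\qquad s'(t)=h(t)^{-2},\qquad 0<t<\pi,
\]
where $h(t)=h_K(\gamma_{e,\theta}(t))$ and $h(0)=1$. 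This mirrors Lemma \ref{lem:quadrics}. Differentiating $\mathrm{ct}_\kappa(s(t))$ gives $-1/(\mathrm{sn}_\kappa^2 h^2)=-1/\sin^2 t$, so $\mathrm{ct}_\kappa(s)=\cot t+C$. Then $h^2=\sin^2 t(\mathrm{ct}_\kappa^2+\kappa)=(\cos t+C\sin t)^2+\kappa\sin^2 t$.

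\emph{Step 3: show $C=0$ and conclude.} I expect this to be the main obstacle. Radial symmetry of $h$ implies $\d h_e=0$, and the expansion $h=1+O(t^2)$ forces $C=0$. Alternatively, $C=0$ follows from smoothness of $h$ at the antipode $-e$, which is the point $q$ reached at $t=\pi$. Since $\langle\theta,e\rangle=\cos t$ and $|P_{e^\perp}\theta|=\sin t$ along $\gamma_{e,\theta}$, we obtain $h_K(\theta)^2=\langle\theta,e\rangle^2+\kappa|P_{e^\perp}\theta|^2$ on $\mathbb{S}^*$. Extending $1$-homogeneously, $h_K$ is the support function of the quadratic form $\langle\cdot,e\rangle^2+\kappa|P_{e^\perp}\cdot|^2$, which is the support function of an ellipsoid of revolution about the axis $e$. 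Undoing the normalization of Step 1 yields \eqref{eq:thm:cheng-milman-0}.

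One subtlety is ensuring that the polar coordinates about $e$ in Theorem \ref{thm:cheng} (valid for $0<t<d_g(p,q)$) cover all of $\mathbb{S}^*$ minus $\{\pm e\}$, i.e. that $d_g(e,q)=\pi$. I would argue this from $\mathbb{S}^*$ having injectivity radius $\pi$ together with $s(t)\to\pi/\sqrt{\kappa}$, and then conclude by continuity.
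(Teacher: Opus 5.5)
Your proposal is correct and follows the paper's proof essentially step for step: you replace $K$ by $h_K(e)^{-1}K$, apply Theorem~\ref{thm:cheng} on $(\mathbb{S}^*,g_{\mathbb{S}^*},f_K)$, compare the warped form with $\d t^2+\sin^2 t\,g_{\mathbb{S}^{n-1}}$ to get $\mathrm{ct}_\kappa(s)=\cot t+C$, and kill $C$ because $h_K$ is radial, so $h_K=1+O(t^2)$ near $e$; your sketch for $d_{g_{\mathbb{S}^*}}(e,q)=\pi$ is more explicit than the paper's one-line assertion. One small slip: with $c=h_K(e)^{-1}$ the constant transforms as $\kappa\mapsto c^{4}\kappa$, not $c^{-4}\kappa$, and your stated value $\widetilde\kappa=h_K(e)^{-4}\kappa$ is the correct one.
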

\begin{proof}
We denote $q := \mathfrak{n}(y)$.
The assumptions of this theorem are equivalent to 
\begin{align*}
    \Ric_{g_{\mathbb{S}^*},f_K}^1 \geq (n-1) \kappa\, \e^{-\frac{4f_K}{n-1}} g_{\mathbb{S}^*},\quad d_{\widehat{g}_K^{\mathbb{S}^*}}(e,q) = \frac{\pi}{\sqrt{\kappa}}.
\end{align*}
We first show that we may assume $f_K(e) = 0$ without loss of generality.
We set 
\begin{align*}
    \lambda := h_K(e)^{-1},\quad \widetilde{K} := \lambda K,\quad \widetilde{\kappa} := \lambda^4 \kappa.
\end{align*}
Then we have $h_{\widetilde{K}} = \lambda h_K$ and $f_{\widetilde{K}} = f_K + (n-1)\log \lambda$.
This implies 
\begin{align*}
    \Ric_{g_{\mathbb{S}^*},f_{\widetilde{K}}}^1 = \Ric_{g_{\mathbb{S}^*}, f_K}^1,\quad \widetilde{\kappa} \e^{-\frac{4f_{\widetilde{K}}}{n-1}} = \kappa \e^{-\frac{4f_K}{n-1}}.
\end{align*}
Moreover,
since $\widehat{g}_{\widetilde{K}}^{\mathbb{S}^*} = \lambda^{-4}\widehat{g}_K^{\mathbb{S}^*}$,
we have 
\begin{align*}
    d_{\widehat{g}_{\widetilde{K}}^{\mathbb{S}^*}}(e,q) = \frac{1}{\lambda^2} d_{\widehat{g}_K^{\mathbb{S}^*}}(e,q) = \frac{\pi}{\sqrt{\widetilde{\kappa}}}.
\end{align*}
Therefore,
since $h_{\widetilde{K}}(e) = 1$,
by replacing $(K,\kappa)$ with $(\widetilde{K},\widetilde{\kappa})$,
we may assume $h_K(e) = 1$.

By applying Theorem \ref{thm:cheng} to $(\mathbb{S}^*, g_{\mathbb{S}^*}, f_K)$,
we see $f_K$ depends only on $t := d_{g_{\mathbb{S}^*}}(e,\cdot)$ and the metric is rotationally symmetric.
Hence,
we see $d_{g_{\mathbb{S}^*}}(e,q) = \pi$.
Also from Theorem \ref{thm:cheng},
it follows that
\begin{align*}
    g_{\mathbb{S}^*} = \d t^2 + \e^{\frac{2f_K(\gamma_{e,\theta}(t))}{n-1}}\mathrm{sn}^2_\kappa \left(s_{f_K,e,\theta}(t)\right) g_{\mathbb{S}^{n-1}}
\end{align*}
for $0 < t < d_{g_{\mathbb{S}^*}}(e,q) = \pi$.
On the other hand,
since $g_{\mathbb{S}^*}$ is the standard Riemannian metric on the sphere,
we see
\begin{align*}
    g_{\mathbb{S}^*} = \d t^2 + \sin^2 (t)g_{\mathbb{S}^{n-1}}.
\end{align*}
Comparing these,
we obtain 
\begin{align*}
    \sin^2 t = h_K(\gamma_{e,\theta}(t))^2 \mathrm{sn}_\kappa^2 \left( s_{f_K,e,\theta}(t) \right)
\end{align*}
for $0 < t < \pi$.
This implies 
\begin{align*}
    \sin t = h_K(\gamma_{e,\theta}(t)) \,\mathrm{sn}_\kappa(s_{f_K,e,\theta}(t)).
\end{align*}
It follows that
\begin{align*}
    \frac{\d}{\d t}\mathrm{ct}_\kappa(s_{f_K,e,\theta}(t))
    = -\frac{1}{\mathrm{sn}_\kappa^2\left( s_{f_K,e,\theta}(t)\right) h_K(\gamma_{e,\theta}(t))^2}
    = -\frac{1}{\sin^2 t} = \frac{\d}{\d t} \mathrm{ct}_1 (t).
\end{align*}
Hence,
we obtain 
\begin{align}\label{eq:thm:cheng-milman-2}
    \mathrm{ct}_\kappa(s_{f_K,e,\theta}(t)) = \mathrm{ct}_1(t) + C
\end{align}
for some constant $C$.
We note that since $s_{f_K,e,\theta}(t)$ does not depend on $\theta$,
the constant $C$ does not depend on $\theta$.

We next show that $C = 0$.
Since $h_K$ is radial,
we see $\frac{\d}{\d t} h_K(\gamma_{e,\theta}(t)) |_{t = 0} = 0$.
Hence,
we have
\begin{align*}
    h_K(\gamma_{e,\theta}(t)) = 1 + O(t^2),\quad h_K(\gamma_{e,\theta}(t))^{-2} = 1 + O(t^2).
\end{align*}
This implies
\begin{align*}
    s_{f_K,e,\theta}(t) = t + O(t^3)
\end{align*}
for sufficiently small $t$.
Together with the Taylor expansion of $\mathrm{ct}_\kappa$,
we obtain 
\begin{align*}
    \mathrm{ct}_\kappa (s) = \frac{1}{s} - \frac{\kappa s}{3} + O(s^3).
\end{align*}
This yields
\begin{align*}
    \mathrm{ct}_\kappa(s_{f_K,e,\theta}(t)) - \mathrm{ct}_1 (t) \rightarrow 0
\end{align*}
when $t \searrow 0$.
Combining this with \eqref{eq:thm:cheng-milman-2},
we have $C = 0$.
Therefore,
we have 
\begin{align*}
    \mathrm{ct}_\kappa(s_{f_K,e,\theta}(t)) = \mathrm{ct}_1(t).
\end{align*}
This yields
\begin{align}
    h_K (\gamma_{e,\theta}(t))^2 &= \frac{\sin^2 t}{\mathrm{sn}_\kappa^{2}\left( s_{f_K,e,\theta}(t) \right) }\label{eq:thm-cheng-milman-3}\\
    &= \sin^2 t\ \left( \mathrm{ct}_\kappa (s_{f_K,e,\theta}(t))^2 + \kappa \right)\nonumber\\
    &= \sin^2t\ (\mathrm{cot}^2 t + \kappa)\nonumber \\
    &= \cos^2 t + \kappa \sin^2 t\nonumber
\end{align}
for $0 < t < \pi$.
This is the support function of an ellipsoid of revolution.
Hence,
$K$ is an ellipsoid of revolution.
Applying the argument above to $(\widetilde{K}, \widetilde{\kappa})$ instead of $(K,\kappa)$ yields \eqref{eq:thm:cheng-milman-0}.
\end{proof}
In Theorem \ref{thm:cheng-milman},
the maximal diameter implies that the convex body is a rotationally symmetric ellipsoid.
Conversely,
we calculate the diameter of ellipsoids and see whether they attain the maximal diameter.
Indeed,
we have the following assertion:
\begin{proposition}\label{prop:ellipsoid-milman}
Let $E$ be an $(n+1)$-dimensional vector space over $\mathbb{R}$,
and let $\mathbb{B}$ be the unit ball in $E$.
Also,
let $T : E \rightarrow E$ be a positive-definite symmetric linear map with eigenvalues 
\begin{align*}
    0 < a_1 \leq a_2 \leq \cdots \leq a_{n+1}.
\end{align*}
For the ellipsoid $K := T(\mathbb{B})$,
we have 
\begin{align*}
    \Ric_{g_{\partial K},f_{\partial K}}^1 \geq (n-1)a_1^2 a_2^2 \e^{-\frac{4f_{\partial K}}{n-1}}g_{\partial K}.
\end{align*}
Moreover,
if $a_2 = \cdots = a_{n+1}$,
we have 
\begin{align}\label{eq:prop:ellipsoid-milman-1}
    \mathrm{diam}(\partial K, \widehat{g}_{\partial K}) = \frac{\pi}{a_1 a_2},
\end{align}
and otherwise,
we have 
\begin{align}\label{eq:prop:ellipsoid-milman-2}
    \mathrm{diam}(\partial K, \widehat{g}_{\partial K}) < \frac{\pi}{a_1 a_2}.
\end{align}
\end{proposition}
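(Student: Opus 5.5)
The plan is to transfer everything to $\mathbb{S}^*$ via the Gauss map $\mathfrak{n}$ and work with the explicit support function. Since $g_{\partial K}=\mathfrak{n}^*g_{\mathbb{S}^*}$, $f_{\partial K}=f_K\circ\mathfrak{n}$ and $\widehat g_{\partial K}=\mathfrak{n}^*\widehat g_K^{\mathbb{S}^*}$, both the curvature bound and the diameter can be computed for $(\mathbb{S}^*,g_{\mathbb{S}^*},f_K)$. For $K=T(\mathbb{B})$ we have $h_K(\theta)=\max_{|u|\le1}\langle\theta,Tu\rangle=|T\theta|$.

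\textbf{Step 1: the curvature bound.} By Proposition \ref{prop:conn-identity} and Proposition \ref{prop:milman-prop4-2}, $\Ric^1_{g_{\mathbb{S}^*},f_K}=(n-1)\,D^2h_K/h_K$. It therefore suffices to show $D^2h_K(X,X)\ge a_1^2a_2^2\,h_K^{-3}$ for every $g_{\mathbb{S}^*}$-unit vector $X\perp\theta$.

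A direct computation of the Euclidean Hessian of $h_K=|T\cdot|$ on $E^*$ gives
\begin{align*}
\overline{D}^2 h_K=\frac{T^2}{h_K}-\frac{T^2\theta\otimes T^2\theta}{h_K^3}.
\end{align*}
Hence
\begin{align*}
h_K^3\,D^2h_K(X,X)=|T\theta|^2|TX|^2-\langle T\theta,TX\rangle^2=|T\theta\wedge TX|^2 .
\end{align*}
For orthonormal $\theta,X$ this quantity is at least the smallest eigenvalue of $\Lambda^2(T^2)$, which is $a_1^2a_2^2$. This yields the bound with $\kappa=a_1^2a_2^2$.

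\textbf{Step 2: the rotationally symmetric case $a_2=\cdots=a_{n+1}$.} Here Theorem \ref{thm:cheng} gives $\mathrm{diam}\le\pi/(a_1a_2)$, so only the reverse inequality is needed. Let $e$ be the $a_1$-eigenvector and let $t=d_{g_{\mathbb{S}^*}}(e,\cdot)$. Then $h_K^2=a_1^2\cos^2t+a_2^2\sin^2t$ depends only on $t$. Since $g_{\mathbb{S}^*}\ge \d t^2$, any curve from $e$ to $-e$ has $\widehat g_K^{\mathbb{S}^*}$-length at least
\begin{align*}
\int_0^\pi\frac{\d t}{a_1^2\cos^2t+a_2^2\sin^2t}=\frac{\pi}{a_1a_2},
\end{align*}
which is the standard integral, evaluated via $u=\tan t$. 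Thus $d(e,-e)=\pi/(a_1a_2)$, and \eqref{eq:prop:ellipsoid-milman-1} follows.

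\textbf{Step 3: the strict inequality otherwise.} By compactness the diameter is attained. If it equalled $\pi/(a_1a_2)$, then Theorem \ref{thm:cheng-milman} with $\kappa=a_1^2a_2^2$ would give
\begin{align*}
|T\theta|^2=b^2\langle\theta,e\rangle^2+\kappa b^{-2}|P_{e^\perp}\theta|^2,\qquad b=h_K(e).
\end{align*}
So the spectrum of $T$ would be $\{b,c,\dots,c\}$, with $c$ of multiplicity $n\ge2$ and $bc=a_1a_2$. Matching this with the ordered list $a_1\le\dots\le a_{n+1}$ leaves two possibilities, both contradicting the assumption that $a_2=\dots=a_{n+1}$ fails:
\begin{itemize}
\item either $b=a_1$, in which case $c=a_2=\dots=a_{n+1}$;
\item or $c=a_1=\dots=a_n$ and $b=a_{n+1}$, in which case $a_1a_{n+1}=a_1a_2$ forces all $a_i$ to be equal.
\end{itemize}
This gives \eqref{eq:prop:ellipsoid-milman-2}.

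I expect the main obstacle to be Step 1: the Hessian computation on the sphere and the identification of the sharp lower bound through the exterior-square eigenvalue. I will also need to check that restricting $\overline{D}^2h_K$ to $T\mathbb{S}^*$ is exactly $D^2h_K$, as in Proposition \ref{prop:milman-prop4-2}.
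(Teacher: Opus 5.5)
Your proposal is correct and takes essentially the same route as the paper. It uses the Gram-determinant form of $D^2h_K$ for the curvature bound, then Theorem \ref{thm:cheng} plus the length estimate for curves from $e$ to $-e$ in the rotationally symmetric case, and finally Theorem \ref{thm:cheng-milman} with the eigenvalue-multiplicity argument for the strict inequality; your appeal to the smallest eigenvalue $a_1^2a_2^2$ of $\Lambda^2(T^2)$ just makes explicit the lower bound $g_K^{\mathbb{S}^*}(w,w)\ge a_1^2a_2^2|w|^2h_K^{-4}$ that the paper states without comment.
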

\begin{proof}
In the argument below,
we identify $E$ with $\mathbb{R}^{n+1}$ and $E^*$.
The following arguments are conducted on $\mathbb{S}^*$.
Pulling back the argument by $\mathfrak{n}$ gives the desired results.
We denote $A := T^2$.
For $\theta \in \mathbb{S}^*$,
we have 
\begin{align}\label{eq:example-ellipsoid-cheng-1}
    h_K (\theta) = h_{T(\mathbb{B})}(\theta) = \max_{x \in \mathbb{B}} \langle Tx,\theta \rangle = |T\theta| = \sqrt{\langle A\theta,\theta \rangle}. 
\end{align}
For $w \in T_\theta \mathbb{S}^*$,
it follows that
\begin{align*}
    \textcolor{black}{D^2 h_K (w,w) = \frac{\langle A w,w\rangle}{h_K(\theta)} - \frac{\langle A\theta,w \rangle^2}{h_K(\theta)^3} }.
\end{align*}
Together with Proposition \ref{prop:milman-prop4-2},
we obtain 
\begin{align*}
    g_K^{\mathbb{S}^*} (w,w) = \frac{\langle A \theta, \theta \rangle \langle Aw,w \rangle - \langle A \theta, w \rangle^2}{h_K(\theta)^4} \geq \frac{a_1^2 a_2^2 |w|^2}{h_K(\theta)^4}.
\end{align*}
Combining this with Proposition \ref{prop:conn-identity},
we see 
\begin{align*}
    \Ric_{g_{\mathbb{S}^*},f_K}^1 = (n-1)g_K^{\mathbb{S}^*} \geq (n-1)a_1^2 a_2^2\, \e^{-\frac{4f_K}{n-1}}g_{\mathbb{S}^*}.
\end{align*}
By Theorem \ref{thm:cheng},
we see 
\begin{align}\label{eq:prop:ellipsoid-milman-3}
    \mathrm{diam}(\partial K, \widehat{g}_{\partial K}) \leq \frac{\pi}{a_1 a_2}.
\end{align}

Next,
we show \eqref{eq:prop:ellipsoid-milman-1}.
For simplicity,
we denote $a := a_1$ and $b := a_2$.
Then there exists $e \in \mathbb{S}^*$ such that $K = T_{e,a,b}(\mathbb{B})$,
where $T_{e,a,b} : E \rightarrow E$ is a map defined as follows:
\begin{align*}
    T_{e,a,b} \, e := a \,e,\quad T_{e,a,b}|_{e^{\perp}} = b \,\mathrm{Id}_{e^{\perp}}.
\end{align*}
For $t(\theta) := d_{g_{\mathbb{S}^*}}(\theta,e)$,
it follows from \eqref{eq:example-ellipsoid-cheng-1} that
\begin{align*}
    h_{K} (\theta)^2 = a^2 \langle \theta, e \rangle^2 + b^2 |P_{e^{\perp}} \theta|^2 = a^2 \cos^2 t(\theta) + b^2 \sin^2 t(\theta).
\end{align*}
Let $\gamma : [0,1] \rightarrow \mathbb{S}^*$ with $\gamma(0) = e$ and $\gamma(1) = -e$.
We denote $\xi(u) := d_{g_{\mathbb{S}^*}}(e, \gamma(u))$.
Then we have 
\begin{align*}
    \left| \frac{\d \gamma}{\d u} (u)\right|_{\widehat{g}_{K}^{\mathbb{S}^*}} 
    \geq \frac{1}{h_K(\gamma(u))^2} \left| \frac{\d \xi}{\d u} (u)\right|.
\end{align*}
Let $L_{\widehat{g}_{K}^{\mathbb{S}^*}} (\gamma)$ denote the length of $\gamma$ with respect to $\widehat{g}_{K}^{\mathbb{S}^*}$.
It follows that
\begin{align*}
    L_{\widehat{g}_{K}^{\mathbb{S}^*}} (\gamma) 
    \geq \int_0^1 \frac{1}{h_K(\gamma(u))^2} \left| \frac{\d \xi}{\d u}(u) \right|\,\d u
    \geq \int_0^\pi \frac{1}{a^2 \cos^2 \xi + b^2 \sin^2 \xi}\ \,\d \xi 
    = \frac{\pi}{ab}.
\end{align*}
Together with \eqref{eq:prop:ellipsoid-milman-3},
this yields \eqref{eq:prop:ellipsoid-milman-1}.

Lastly,
we show \eqref{eq:prop:ellipsoid-milman-2} by contradiction.
We assume the equality holds in \eqref{eq:prop:ellipsoid-milman-3}.
Then there exists $x,y \in \partial K$ such that 
\begin{align*}
    d_{\widehat{g}_{\partial K}} (x,y) = \frac{\pi}{\sqrt{\kappa}},
\end{align*}
where we set $\kappa := a_1^2 a_2^2$.
Below,
we denote $e := \mathfrak{n}(x)$.
Together with \eqref{eq:thm:cheng-milman-0},
we obtain 
\begin{align*}
    \langle T^2 \theta, \theta \rangle = h_K(e)^2 \langle \theta, e\rangle^2 + \kappa \,h_K(e)^{-2} |P_{e^{\perp}}\theta|^2
\end{align*}
for $\theta \in \mathbb{S}^*$.
Hence,
the eigenvalues of $T$ are $h_K(e)$ with multiplicity one and $h_K(e)^{-1} \sqrt{\kappa}$ with multiplicity $n$.
If $h_K(e) \leq h_K(e)^{-1} \sqrt{\kappa}$,
this implies $a_2 = \cdots = a_{n+1}$,
which is a contradiction.
If $h_K(e) > h_K(e)^{-1} \sqrt{\kappa}$,
since $h_K(e)^{-1}\sqrt{\kappa}$ is the eigenvalue with multiplicity $n$ with $n \geq 2$,
we have $a_1 = a_2 = h_K(e)^{-1}\sqrt{\kappa}$.
Hence,
we have 
\begin{align*}
    \kappa = h_K(e)^{4}.
\end{align*}
This contradicts the assumption $h_K(e) > h_K(e)^{-1} \sqrt{\kappa}$.
Therefore,
the equality in \eqref{eq:prop:ellipsoid-milman-3} is impossible.
We arrive at the desired assertion.
\end{proof}

\subsection*{Acknowledgements}

The author is grateful to Professor Yohei Sakurai for valuable comments.
This work was supported by JSPS KAKENHI Grant Number JP25KJ0271.

\subsection*{Use of AI}

The author prepared this manuscript with the aid of ChatGPT.
The author takes full responsibility for the content of this paper.

\bibliographystyle{amsplain}
\bibliography{ref2}
\end{document}